\documentclass[11pt,reqno]{amsart}
\usepackage{inputenc}
\usepackage{enumerate}
\usepackage{amssymb}
\usepackage{amsmath}
\usepackage{mathrsfs}
\usepackage{amsthm}
\usepackage{tikz-cd}
\usepackage{mathpazo}
\usepackage{extarrows}
\usepackage{color}
\usepackage{setspace}
\usepackage[colorlinks,linkcolor = blue,anchorcolor = red,urlcolor = blue,citecolor = blue]{hyperref}
\usepackage{graphicx, subfig}
\usepackage{cleveref}
\usepackage[square, comma, sort&compress, numbers]{natbib}

\theoremstyle{definition}
\newtheorem{Def}{Definition}[section]
\newtheorem{Exa}[Def]{Examples}
\newtheorem{Rem}[Def]{Remark}

\theoremstyle{plain}
\newtheorem{Thm}[Def]{Theorem}
\newtheorem{Lem}[Def]{Lemma}

\newtheorem{Cor}[Def]{Corollary}

\newtheorem*{CBC/N}{The relative coarse Baum--Connes/Novikov Conjecture}
\newtheorem*{MCBC/N}{The maximal relative coarse Baum--Connes/Novikov Conjecture}
\newtheorem*{con}{Conjecture}

\usepackage[left=2.6cm, right=2.6cm, top=3cm, bottom=3cm]{geometry}
\usepackage[all]{xy}
\usepackage{bm}

\def\IN{\mathbb N}\def\IR{\mathbb R}\def\IC{\mathbb C}\def\ID{\mathbb D}

\def\B{\mathcal B}\def\E{\mathcal E}\def\G{\mathcal G}\def\K{\mathcal K}\def\L{\mathcal L}\def\H{\mathcal H}\def\S{\mathcal S}\def\M{\mathcal M}

\def\supp{\textup{supp}}

\def\prop{\textup{Prop}}
\def\diam{\textup{diam}}

\def\ox{\otimes}

\def\wt{\widetilde}
\def\ox{\otimes}

\def\Ga{\Gamma}

\def\cal{\curvearrowleft}

\author[L.~Guo]{Liang Guo}
\address[L.~Guo]{Shanghai Institute for Mathematics and Interdisciplinary Sciences, Shanghai, 200433, P.~R.~China}
\email{liangguo@simis.cn}

\title{A groupoid approach to the equivariant coarse Baum--Connes conjecture}
\date{\today}

\begin{document}

\maketitle

\begin{abstract}
In this paper, we develop a groupoid approach to the equivariant coarse Baum--Connes conjecture. For a bounded geometry metric space $X$ equipped with a proper, free, and isometric action of a countable discrete group $\Gamma$, we introduce the equivariant coarse groupoid $G(X, \Gamma)$. We prove that the groupoid Baum--Connes conjecture for $G(X, \Gamma)$ with coefficients in $\ell^{\infty}(X,\K)^\Ga$ is equivalent to the equivariant coarse Baum--Connes conjecture for $(X, \Gamma)$ using a localization algebra description of equivariant $KK^\G$-theory for \'{e}tale groupoids. As applications of this framework, we prove that if the space $X$ admits a coarse embedding into Hilbert space (which is not required to be $\Gamma$-equivariant), then the equivariant coarse Novikov conjecture holds for $(X, \Gamma)$, i.e., the assembly map $\mu_{X,\Ga}$ is an injection. We also obtain a new proof of the equivariant coarse Baum--Connes conjecture if $X$ admits an equivariant coarse embedding into Hilbert space.
\end{abstract}

\tableofcontents

\section{Introduction}
The Baum--Connes conjecture \cite{BCH1994} stands as one of the most important problems in non-commutative geometry. For a countable discrete group $\Ga$, it states there is an isomorphism between the equivariant $K$-homology of the classifying space for proper actions of $\Ga$ and the $K$-theory of the reduced group $C^*$-algebra. Its validity directly implies several celebrated rigidity theorems and conjectures, including the Novikov conjecture on the homotopy invariance of higher signatures and the Gromov--Lawson--Rosenberg conjecture on the existence of positive scalar curvature metrics.
Historically, the Baum--Connes conjecture is rooted in the index theory of compact manifolds, generalizing the classical Atiyah--Singer index theorem. Specifically, it provides a framework to study the higher index of lifted elliptic differential operators on the universal cover $\widetilde{M}$ of a compact manifold $M$. 

Over the past decades, this index-theoretic philosophy has branched out into several major directions. One prominent direction focuses on the index theory of non-compact manifolds. To capture the large-scale behavior of a non-compact space $X$, J.~Roe \cite{HR1993, Roe1996} introduced the equivariant Roe algebra $C^*(X)^\Gamma$ and formulated the \emph{equivariant coarse Baum--Connes conjecture}. Conceptually, this equivariant coarse framework can be viewed as the natural generalization of higher index theory to the universal cover $\widetilde{M}$ of a non-compact manifold $M$. 
Another major direction aims to establish index theory for highly singular spaces, such as foliations and orbit spaces. This gives rise to the study of groupoid $C^*$-algebras and the \emph{groupoid Baum--Connes conjecture}, see \cite{Tu2000} for a survey of the Baum--Connes conjecture for groupoids.

Both directions have witnessed spectacular cutting-edge progress in recent years. In the realm of coarse geometry, G.~Yu proved the coarse Baum--Connes conjecture for spaces admitting coarse embeddings into Hilbert space \cite{Yu2000}. His method inspired a lot of works in the equivariant case \cite{FW2016, DFW2024, FZ2024, GWWY2024}. In the realm of groupoids, J.-L.~Tu \cite{Tu1999} extended N.~Higson and G.~Kasparov's result \cite{HK2001} by proving the groupoid Baum--Connes conjecture for any a-T-menable groupoid.

A striking point connecting these two paths was discovered by Skandalis, Tu, and Yu \cite{STY2002}. They proved that when the group action is trivial, the coarse Baum--Connes conjecture for a metric space $X$ with bounded geometry can actually be realized as a groupoid Baum--Connes conjecture for the coarse groupoid $G(X)$ associated with $X$ with coefficients in $\ell^\infty(X,\K)$. They characterized numerous coarse geometric phenomena in groupoid language. Since then, the groupoid approach has played an important role in tackling various problems surrounding Roe algebras, for example, in the coarse Baum--Connes conjecture \cite{FS2014} and in the ideal classification of Roe algebras \cite{CW2004, WZ2025}.


Despite these successes, extending this groupoid construction to the \emph{equivariant} setting is not a straightforward generalization. Nevertheless, previous achievements strongly suggest that an equivariant analogue of the coarse groupoid would be a powerful tool for tackling related index problems. Motivated by this, the present paper introduces a groupoid model $G(X, \Gamma)$ for the equivariant coarse Baum--Connes conjecture. We construct $G(X, \Gamma)$ as an \'{e}tale, locally compact, Hausdorff groupoid whose unit space is the Stone-\v{C}ech compactification $\beta(X/\Gamma)$.

\begin{Thm}[\Cref{thm:final_equivalence}]\label{thm:intro_main}
Let $X$ be a metric space with bounded geometry, and $\Gamma$ a countable discrete group acting properly, freely, and isometrically on $X$. Then there is a commuting diagram
\[\begin{tikzcd}
\lim\limits_{d \to \infty} K_*^\Gamma(P_d(X))\arrow[r, "\mu_{X,\Gamma}"]\arrow[d, "\cong"']
&
K_*(C^*(X)^\Gamma)\arrow[d, "\cong"]
\\
K_*^{\text{top}}(G(X,\Gamma), A)\arrow[r, "\mu_{G(X,\Gamma)}"]
&
K_*(A \rtimes_r G(X,\Gamma))
\end{tikzcd}\]
where $A=\ell^\infty(X,\K)^\Gamma$.

As a result, the equivariant coarse Baum--Connes conjecture holds for $(X,\Gamma)$ if and only if the groupoid Baum--Connes conjecture holds for $G(X,\Gamma)$ with coefficients in $\ell^\infty(X,\K)^\Gamma$.
\end{Thm}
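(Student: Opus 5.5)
The plan follows the strategy of Skandalis--Tu--Yu \cite{STY2002}, carried out $\Gamma$-equivariantly. There are three steps: identify the right-hand vertical arrow at the level of $C^*$-algebras, identify the left-hand vertical arrow through localization algebras, and check that the two assembly maps correspond under these identifications.

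\textbf{Right vertical arrow.} I would first build $G(X,\Gamma)$ concretely. Fix a fundamental domain, i.e.\ a Borel section $X/\Gamma\to X$, so that $X\cong (X/\Gamma)\times\Gamma$ as sets. The equivariant entourages $\{(x,y): d(x,y)\le R\}$ are $\Gamma$-invariant, so they descend to bounded-propagation relations on $X/\Gamma$ that carry a $\Gamma$-valued cocycle. Taking closures in $\beta(X/\Gamma)\times\beta(X/\Gamma)$ twisted by this cocycle gives an étale groupoid with unit space $\beta(X/\Gamma)$; I would take this as the definition of $G(X,\Gamma)$. The groupoid $G(X,\Gamma)$ acts on $A=\ell^\infty(X,\K)^\Gamma\cong \ell^\infty(X/\Gamma,\ell^\infty(\Gamma,\K))^{\Gamma}$, which is a $C(\beta(X/\Gamma))$-algebra.

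Next I would show that $C_c(G(X,\Gamma),A)$ maps onto the algebraic equivariant Roe algebra $\IC[X]^\Gamma$ with coefficients. The key observation is that a $\Gamma$-invariant finite-propagation kernel on $X$ is the same thing as a bounded family of matrix blocks indexed by pairs in $X/\Gamma$ together with group elements. The map is isometric for the reduced norms, which one checks by comparing regular representations at the points of $X/\Gamma$. This gives $A\rtimes_r G(X,\Gamma)\cong C^*(X)^\Gamma$, possibly up to stabilization or Morita equivalence, and hence the right-hand isomorphism in $K$-theory.

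\textbf{Left vertical arrow.} For this side I would use the localization algebra description of $KK^{G}$-theory for étale groupoids established earlier in the paper. That description gives $K^{top}_*(G(X,\Gamma),A)\cong \lim_d K_*(C_L^*(P_d(G),A)\rtimes G)$, where the limit runs over proper cocompact subspaces of $\underline{E}G$. The Rips complexes $P_d(X)$, with their free proper $\Gamma$-actions, assemble into a cofinal system of proper $G(X,\Gamma)$-spaces of the form $\beta(X/\Gamma)\times_{X/\Gamma}P_d(X)/\!\sim$. Repeating the argument of the previous step with an added time parameter identifies the groupoid localization algebras with the equivariant localization algebras $C_L^*(P_d(X))^\Gamma$. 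Since $K_*(C_L^*(P_d(X))^\Gamma)\cong K^\Gamma_*(P_d(X))$ by the equivariant localization theorem, this yields the left vertical isomorphism.

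\textbf{Commutativity.} Both assembly maps are realized as evaluation at $t=0$ from the respective localization algebras, and the identifications in the two steps above are compatible with evaluation. The square therefore commutes. The final equivalence follows because both vertical maps are isomorphisms.

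\textbf{Expected main obstacle.} The hardest part will be the cofinality and identification on the left-hand side: showing that the $G(X,\Gamma)$-spaces built from $P_d(X)$ exhaust $\underline{E}G(X,\Gamma)$ up to proper homotopy. This needs $\Gamma$-freeness and bounded geometry to guarantee properness over $\beta(X/\Gamma)$. I would first try to adapt Proposition~3.? of \cite{STY2002}, constructing equivariant partitions of unity on $\beta(X/\Gamma)$ to map an arbitrary proper cocompact $G$-space into some $P_d(X)$.
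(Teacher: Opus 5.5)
Your plan follows the paper's proof. The right-hand isomorphism comes from $\ell^\infty(X,\K)^\Gamma\rtimes_r G(X,\Gamma)\cong C^*(X)^\Gamma$, via kernels and norm comparison at the points of $X/\Gamma$. The left-hand side uses the localization-algebra picture of $KK^{G}$ together with an identification of the groupoid localization algebra with $C^*_L(P_d(X))^\Gamma$, obtained by restricting to one fiber over a point of $X/\Gamma$. Cofinality comes from a cutoff-function map $z\mapsto\sum_g c(g^{-1}z)\,g$, and commutativity from evaluation at $0$.

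Two things in your write-up need to be made precise.

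First, $G(X,\Gamma)$ should be $\bigcup_R\overline{\Delta_R/\Gamma}\subseteq\beta((X\times X)/\Gamma)$. Several elements of $\Gamma$ can link the same pair of orbits, so there is no single $\Gamma$-valued cocycle on $X/\Gamma\times X/\Gamma$ to twist by.

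Second, your space ``$\beta(X/\Gamma)\times_{X/\Gamma}P_d(X)/\!\sim$'' is not well defined as written. The paper realizes it as $X_d=\bigcup_R\overline{W_{R,d}/\Gamma}$, with $W_{R,d}=\{(x,w)\in X\times P_d(X): d(x,w)\le R\}$, taken inside the uniform (Samuel) compactification of $(X\times P_d(X))/\Gamma$. With this choice the fibers over boundary points are Rips complexes of limit spaces, and the action is proper and cocompact.
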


To establish this equivalence, our strategy is to introduce the language of coarse geometry into the groupoid equivariant setting. Namely, we develop the machinery of \emph{groupoid equivariant Roe algebras} and \emph{groupoid localization algebras} to serve as the analytic foundation of our proof. On the right side, the isomorphism between the $K$-theory of the equivariant Roe algebra and the reduced groupoid $C^*$-algebra is relatively standard, as the $K$-theory of this groupoid Roe algebra naturally recovers the $K$-theory of the reduced groupoid $C^*$-algebra. 
In contrast, establishing the isomorphism on the topological left side, connecting the equivariant $K$-homology of the Rips complex $P_d(X)$ to the topological $K$-theory associated with the classifying space of the groupoid $G(X,\Gamma)$, presents the main technical challenges. The difficulties primarily lie in two aspects. First, one needs to bridge the two different languages of $K$-homology. To achieve this, following the perspectives developed in \cite{TWY2018} and \cite{BenameurRoy2022} for constructing groupoid Roe algebras, we introduce a corresponding \emph{localization algebra} for \'{e}tale groupoids, and prove that its $K$-theory precisely computes the topological $KK^{\mathcal{G}}$-theory for \'{e}tale groupoids (\Cref{thm:localization algebra}). Second, there is a geometric difficulty in comparing the two models of classifying spaces, which we address in \Cref{sec: equivalence of assembly maps} (see \Cref{thm:final_equivalence}). By resolving these issues on the topological side, we are able to translate the entire groupoid assembly map into a purely coarse geometric framework.

To illustrate the utility of this groupoid machinery, we present two applications. First, we characterize equivariant coarse embeddings into Hilbert space purely in terms of groupoid language. Combining Tu's result on the Baum--Connes conjecture for a-T-menable groupoid \cite{Tu1999}, this perspective yields a conceptual proof of the theorem of Fu and Wang \cite{FW2016}.

\begin{Thm}[\Cref{thm: a-T-menable}]\label{cor:ECBC}
Let $X$ be a bounded geometry metric space equipped with a free, proper, and isometric $\Gamma$-action. The space $X$ admits an equivariant coarse embedding into Hilbert space if and only if its equivariant coarse groupoid $G(X, \Gamma)$ is a-T-menable.
\end{Thm}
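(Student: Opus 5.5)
We have to guess the construction of $G(X,\Gamma)$. The natural model is to take the coarse groupoid $G(X)=\bigcup_R \overline{E_R}\subseteq \beta X\times\beta X$ and quotient it by the diagonal $\Gamma$-action. Equivalently, $G(X,\Gamma)$ is the union over $R>0$ of the closures in $\beta(X/\Gamma)\times\beta(X/\Gamma)$-type spaces of the sets $E_R/\Gamma$, where $E_R=\{(x,y):d(x,y)\le R\}$ and $\Gamma$ acts diagonally. Concretely, I expect an arrow to be determined by a $\Gamma$-orbit $[x,y]$ of a pair at distance $\le R$, with the unit space $\beta(X/\Gamma)$. I will work with this model, and with the standard notion of a-T-menable groupoid: there is a continuous proper conditionally negative definite function $\psi: G(X,\Gamma)\to\IR$. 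Here proper means that $\psi^{-1}([0,C])$ is relatively compact for every $C$.

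\emph{Forward direction.} Let $f:X\to H$ be a $\Gamma$-equivariant coarse embedding. Here $\Gamma$ acts on $H$ by affine isometries, so $\|f(\gamma x)-f(\gamma y)\|=\|f(x)-f(y)\|$, and there are nondecreasing $\rho_\pm$ with $\rho_-\to\infty$ such that
\[\rho_-(d(x,y))\le\|f(x)-f(y)\|\le\rho_+(d(x,y)).\]
Define $\psi_0(x,y)=\|f(x)-f(y)\|^2$ on $\bigcup_R E_R$. This function is $\Gamma$-invariant, so it descends to $\bigcup_R E_R/\Gamma$. It is bounded by $\rho_+(R)^2$ on $E_R/\Gamma$, so it extends continuously to each closure $\overline{E_R/\Gamma}$, which is compact open by bounded geometry and the Stone--\v{C}ech property. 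The extensions are compatible, giving a continuous $\psi$ on $G(X,\Gamma)$. Conditional negative definiteness is the usual Schoenberg kernel identity on the dense set of $X$-points, passed to limits by continuity. For properness, $\psi\le C$ forces $\rho_-(d)\le\sqrt C$, hence $d\le R_C$. So $\psi^{-1}[0,C]$ lies inside the compact set $\overline{E_{R_C}/\Gamma}$.

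\emph{Reverse direction.} Given a proper continuous conditionally negative definite $\psi$ on $G(X,\Gamma)$, restrict it to the dense open set of genuine arrows $[x,y]$. Pulling back along $X\times X\to (X\times X)/\Gamma$ gives a $\Gamma$-invariant, conditionally negative definite kernel $k(x,y)=\psi([x,y])$ on $X$. We may assume $\psi$ vanishes on units, replacing $\psi$ by $\psi-\tfrac12(\psi\circ r+\psi\circ s)$ style corrections if needed. The GNS/Schoenberg construction then gives a Hilbert space $H$ and a map $f:X\to H$ with $\|f(x)-f(y)\|^2=k(x,y)$. By $\Gamma$-invariance of $k$, the $\Gamma$-action on $X$ induces an isometric affine action on $H$ making $f$ equivariant, via the uniqueness of the GNS construction applied to the translated kernels.

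Upper control comes from compactness: $\psi$ is continuous on the compact set $\overline{E_R/\Gamma}$, so it is bounded there by some $\rho_+(R)^2$. Lower control comes from properness. The set $\psi^{-1}[0,C]$ is relatively compact, hence covered by finitely many compact opens $\overline{E_{R}/\Gamma}$. So $k(x,y)\le C$ implies $d(x,y)\le R_C$, which yields $\rho_-$.

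The main obstacle I expect is the precise topology of $G(X,\Gamma)$. First, one must check that each $\overline{E_R/\Gamma}$ is compact open and that these sets exhaust $G(X,\Gamma)$, so that the two compactness arguments above are valid. Second, one must check that the arrows coming from $X$ are dense, which is needed for the conditional negative definiteness to extend. This requires a description of $E_R/\Gamma$ as a finite union of graphs of partial translations on $X/\Gamma$, using freeness, properness and bounded geometry. I would establish this decomposition first, presumably from the paper's construction of $G(X,\Gamma)$, and then run the two directions above.
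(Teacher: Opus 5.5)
Your proof is correct and essentially the same as the paper's. In the forward direction the paper also sets $\psi([x,y])=\|f(x)-f(y)\|^2$, extends it from each $\Delta_R/\Gamma$ to its closure, and gets conditional negative definiteness from Schoenberg on the arrows $[y_i,x]$. In the reverse direction it restricts $\psi$ to $(X\times X)/\Gamma$, applies Schoenberg/GNS to the $\Gamma$-invariant kernel $k(x,y)=\psi([x,y])$, and reads off equivariance and the coarse bounds; you spell out the two properness/compactness arguments in more detail than the paper, which calls them straightforward.

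One small caveat concerns your guessed model. $G(X,\Gamma)$ is $\bigcup_R\overline{\Delta_R/\Gamma}$ inside $\beta((X\times X)/\Gamma)$. It is not the quotient of $G(X)$ by $\Gamma$, since already $\beta X/\Gamma\neq\beta(X/\Gamma)$, and it does not sit inside $\beta(X/\Gamma)\times\beta(X/\Gamma)$. The concrete description you actually work with agrees with the paper's, however. The clopen, exhaustion and density facts you flag are supplied by the definition together with \Cref{lem:partition} and \Cref{thm:etale}.
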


As the second application, we show that this framework allows us to weaken the geometric assumptions typically required for the equivariant coarse Novikov conjecture. Specifically, we prove the conjecture for spaces admitting a coarse embedding into Hilbert space, \emph{without requiring the embedding to be $\Gamma$-equivariant}.

\begin{Thm}[\Cref{thm: ECNC for CE spaces}]\label{cor:ECNC}
Let $X$ be a bounded geometry metric space equipped with a free, proper, and isometric $\Gamma$-action. If $X$ admits a coarse embedding into Hilbert space (not necessarily equivariant), then the equivariant coarse Novikov conjecture holds for $(X, \Gamma)$, i.e., the equivariant coarse assembly map
$$\mu_{X,\Ga}: \lim\limits_{d \to \infty} K_*^\Gamma(P_d(X))\to K_*(C^*(X)^\Gamma)$$
is an injection.
\end{Thm}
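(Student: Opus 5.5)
Via \Cref{thm:intro_main}, the equivariant coarse Novikov conjecture for $(X,\Ga)$ becomes injectivity of the groupoid assembly map $\mu_{G(X,\Ga)}$ with coefficients in $A=\ell^\infty(X,\K)^\Ga$, so I will work entirely on the groupoid side. A coarse embedding that is not $\Ga$-equivariant does not make $G(X,\Ga)$ a-T-menable, so \Cref{cor:ECBC} does not apply directly. Instead I would relate $G(X,\Ga)$ to the ordinary coarse groupoid $G(X)$ of Skandalis--Tu--Yu, which \emph{is} a-T-menable when $X$ coarsely embeds into Hilbert space.

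\textbf{Step 1 (groupoid-level relation).} Since the action is free, proper and isometric, $\Ga$ acts on $\beta X$ and on $G(X)$ by groupoid automorphisms, and I expect $G(X,\Ga)$ to be Morita equivalent to a crossed-product type groupoid $G(X)\rtimes\Ga$ (or the transformation groupoid of $\Ga$ acting on $G(X)$ through the diagonal action). Concretely, $G(X)\rtimes\Ga$ should be equivalent to the quotient groupoid $\Ga\backslash G(X)$ restricted to a fundamental domain, which is a model for $G(X,\Ga)$ with unit space $\beta(X/\Ga)$. Under this equivalence the coefficient algebra $\ell^\infty(X,\K)^\Ga$ corresponds to $\ell^\infty(X,\K)$ with its induced $\Ga$-action. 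Hence the assembly map for $G(X,\Ga)$ with coefficients in $A$ identifies with the assembly map for $G(X)\rtimes\Ga$ with coefficients in $\ell^\infty(X,\K)$.

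\textbf{Step 2 (Novikov for the semidirect product).} By Tu's theorem, $G(X)$ is a-T-menable, so it admits a proper continuous affine isometric action on a continuous field of Hilbert spaces. It therefore has a $\gamma$-element equal to $1$, i.e.\ a Dirac--dual-Dirac pair $(\alpha,\beta)$ in $KK^{G(X)}$ with $\beta\circ\alpha=1$, as in Tu's proof \cite{Tu1999}. I would then use a Higson-type argument: a groupoid $\mathcal G$ fitting into an extension with quotient $\Ga$, or more precisely an action of $\Ga$ on $G(X)$, has injective assembly if the kernel groupoid acts amenably at infinity, or if $\Ga$ acts properly.

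\textbf{Step 3 (the obstacle).} The key observation I would exploit is that $\Ga$ acts \emph{properly} on $X$. Hence the units of $G(X)\rtimes\Ga$, namely $\beta X$ with a proper $\Ga$ part on $X$, behave like a proper $G(X)\rtimes\Ga$-space over a proper-type piece, and the Dirac element for $G(X)$ can be averaged over $\Ga$. I would try to construct the Bott and Dirac elements $\beta,\alpha$ for $G(X)\rtimes\Ga$ by pulling back via the projection to $G(X)$. Bott periodicity and properness of the resulting algebra $\mathcal A(H)$ over $G(X)\rtimes\Ga$ then give a left-inverse to the assembly map, so $\gamma$ exists and the assembly map is split injective. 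The expected difficulty is that the affine action on the Hilbert field is not $\Ga$-equivariant. I would handle this by inducing: replace the field $H$ by $\ell^2(\Ga,H)$ with the translated embeddings $x\mapsto(f(\gamma x))_\gamma$ suitably weighted. This yields a $\Ga$-equivariant but only \emph{coarsely} proper field, which suffices for the injectivity (not surjectivity) argument, as in Higson's and Tu's Novikov proofs. Assembling these steps and transporting back through \Cref{thm:intro_main} gives injectivity of $\mu_{X,\Ga}$.
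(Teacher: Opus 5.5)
\textbf{There is a genuine gap, beginning with Step~1.} You expect $G(X,\Ga)$ to be Morita equivalent to $G(X)\rtimes\Ga$, but this fails for infinite $\Ga$.

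\emph{Why it fails.} Forming a quotient groupoid $\Ga\backslash G(X)$ needs the $\Ga$-action on the unit space $\beta X$ to be free and proper. No infinite group acts properly on a compact space.

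\emph{A concrete counterexample.} Take $X=\Ga$ with right translation. Then $G(\Ga,\Ga)=\Ga$, whose orbit space is a single point. The orbits of $G(\Ga)\rtimes\Ga$ on its unit space $\beta\Ga$ are the orbits of the left-and-right translation action of $\Ga\times\Ga$. So $\Ga$ is one orbit, and the nonempty invariant set $\beta\Ga\setminus\Ga$ contains further orbits. Morita equivalent groupoids have homeomorphic orbit spaces, so these two groupoids are not equivalent. Your transfer of coefficients from $\ell^\infty(X,\K)^\Ga$ to $\ell^\infty(X,\K)$ has no basis either.

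\emph{The relation that does hold.} It goes the other way: $G(X)\cong G(X,\Ga)\ltimes\beta X$, with $G(X,\Ga)$ acting on $\beta X$ along the extension $p:\beta X\to\beta(X/\Ga)$ of the quotient map. $G(X)$ is a-T-menable by Skandalis--Tu--Yu; Tu's theorem is the step from a-T-menability to Baum--Connes. Combining these with Shapiro's lemma gives Baum--Connes for $G(X,\Ga)$ with coefficients in $C(\beta X)\ox_{C(\beta(X/\Ga))}A$. It does not give it with coefficients in $A$.

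\textbf{Steps~2--3 do not close the remaining gap.}

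\emph{No proper equivariant field can exist in general.} A Dirac--dual-Dirac left inverse needs a proper algebra $\mathcal{A}(H)$, that is, a proper equivariant affine action. The isotropy of $G(X)\rtimes\Ga$ (and of $G(X,\Ga)$) at a point of $X$ is a copy of $\Ga$. A proper conditionally negative definite function on the groupoid would therefore make $\Ga$ a-T-menable. This fails, for example, for $X=\Ga=SL_3(\mathbb{Z})$, which is exact (hence coarsely embeddable) but has property (T).

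\emph{The induction trick fails.} The family $(f(x\gamma))_{\gamma\in\Ga}$ is not in $\ell^2(\Ga,H)$. By the previous point, no reweighting can turn it into an equivariant proper field. You give no mechanism by which a merely ``coarsely proper'' field would yield a proper $\mathcal{A}(H)$. The extension criterion you invoke in Step~2 is not an available theorem.

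\textbf{The missing idea is Higson's measure replacement: enlarge the unit space, not the Hilbert space.}
\begin{enumerate}
\item Let $\M\to\beta(X/\Ga)$ be the space of probability measures on $\beta X$ supported on a single fibre of $p$, with the push-forward action of $G(X,\Ga)$.
\item Integrate a proper conditionally negative definite function on $G(X,\Ga)\ltimes\beta X$ against $\mu$. This gives one on $G(X,\Ga)\ltimes\M$, so assembly with coefficients $A_\M$ is an isomorphism.
\item Show that $\iota_*:K^{top}_*(G(X,\Ga),A)\to K^{top}_*(G(X,\Ga),A_\M)$ is an isomorphism:
\begin{itemize}
\item Over slices of proper cocompact $G(X,\Ga)$-spaces the isotropy is finite.
\item A global section of $\M$ (Dirac masses on a fundamental domain, extended to $\beta(X/\Ga)$) can be averaged to an equivariant section.
\item The fibrewise convex homotopy to this section gives a $KK$-equivalence.
\item Mayer--Vietoris globalises the result.
\end{itemize}
\item Injectivity of $\mu$ then follows from the commutative square relating $\mu$, $\mu_\M$ and the two maps $\iota_*$.
\end{enumerate}
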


The proof of this corollary proceeds entirely within our groupoid framework. The key observation is the isomorphism
\[ G(X) \cong G(X, \Gamma) \ltimes \beta X, \]
where the anchor map $\beta X \to \beta(X/\Gamma)$ is extended by the canonical quotient. Because $X$ coarsely embeds into Hilbert space, the coarse groupoid $G(X)$ is a-T-menable \cite{STY2002}. This structural compatibility guarantees that $G(X, \Gamma)$ satisfies the Baum--Connes conjecture with coefficients in $C(\beta X)$. To pass from $C(\beta X)$ to $C(\beta X/\Ga)$, we employ a measure replacement technique inspired by Higson \cite{Higson2000}. Specifically, we construct probability measures on each fiber and use a Mayer-Vietoris argument in $KK$-theory to complete the proof.

\noindent\textbf{Organization of the paper.} The paper is organized as follows. Section 2 reviews the necessary preliminaries on equivariant Roe algebras and the geometric coarse Baum--Connes conjecture. Section 3 constructs the equivariant coarse groupoid $G(X, \Gamma)$ and explores its Morita equivalence under equivariant coarse equivalence. Section 4 introduces the localization algebra for \'etale groupoids and proves its equivalence with $KK^{\mathcal{G}}$-theory, and proves the main theorem. In Section 5, we use our main theorem to study higher index theory and prove \Cref{cor:ECBC} and \Cref{cor:ECNC}.

\section{Preliminaries on groupoids and equivariant Roe algebras}

Throughout this paper, we shall always assume $X$ is a uniformly discrete metric space with bounded geometry, $\Gamma$ is a countable discrete group which acts properly on $X$ from the right. We say the action $X\cal \Gamma$ has \emph{equivariant bounded geometry} if there exists $N>0$ such that $\#\Ga_x\leq N$ for any $x\in X$, where $\Ga_x$ is the stabilizer group of $x$. It is proved in \cite{GWWY2024} that $X\cal \Ga$ has equivariant bounded geometry if and only if there exists a metric space $Z$ with bounded geometry and a proper and \emph{free} $\Ga$-action such that $Z\cal \Gamma$ is equivariantly coarsely equivalent to $X\cal \Ga$. Thus, under equivariant coarse equivalence, we may, without loss of generality, assume that the group action on $X$ is free.

\subsection{Groupoids}

In this subsection, we shall briefly introduce some basic facts of \'etale groupoids. For the definitions of groupoids and their reduced $C^*$-algebras, we refer the reader to \cite{MRW1987, Renault1980}.

We shall first recall the notion of \emph{Morita equivalence} for groupoids. The standard approach utilizes the framework of principal bibundles (or equivalence bispaces), introduced by Muhly, Renault, and Williams \cite{MRW1987}.

\begin{Def}
Let $\G$ be a topological groupoid with unit space $\G^{(0)}$. A \emph{left $\G$-space} is a topological space $Z$ equipped with a continuous, open surjective map $r_Z: Z \to \G^{(0)}$ (which is called a left \emph{anchor} map) and a continuous action map
$$\G \times_{\G^{(0)}} Z \to Z, \quad (g, z) \mapsto g \cdot z,$$
where $\G \times_{\G^{(0)}} Z := \{(g, z) \in \G \times Z \mid s(g) = r_Z(z)\}$ is the \emph{fibered product space} equipped with the subspace topology, such that:
\begin{enumerate}
    \item $r_Z(g \cdot z) = r(g)$ for all $(g, z) \in \G  \times_{\G^{(0)}} Z$;
    \item $g_1 \cdot (g_2 \cdot z) = (g_1 g_2) \cdot z$ whenever $(g_1, g_2) \in \G^{(2)}$ and $(g_2, z) \in \G \times_{\G^{(0)}} Z$;
    \item $r_Z(z) \cdot z = z$ for all $z \in Z$.
\end{enumerate}
A \emph{right $\H$-space} for a groupoid $\H$ is defined analogously, governed by $s_Z: Z \to \H^{(0)}$ and an action map defined on the fibered product $Z \times_{\H^{(0)}} \H := \{(z, h) \in Z \times \H \mid s_Z(z) = r(h)\}$. 
\end{Def}

A left $\G$-action on $Z$ is said to be \emph{free} if $g \cdot z = z$ implies $g \in \G^{(0)}$, and \emph{proper} if the map $\G \times_{\G^{(0)}} Z \to Z \times Z$ given by $(g, z) \mapsto (g \cdot z, z)$ is a proper map. A left (right) $\G$-space $Z$ equipped with a free and proper action is called a \emph{principal left (right) $\G$-space}.

\begin{Def}\label{def:Morita_equivalence}
Two topological groupoids $\G$ and $\H$ are said to be \emph{Morita equivalent} if there exists a topological space $Z$ (called a \emph{$(\G, \H)$-equivalence bispace}) satisfying the following conditions:
\begin{enumerate}
    \item $Z$ is a principal left $\G$-space and a principal right $\H$-space;
    \item The two actions commute, i.e., $(g \cdot z) \cdot h = g \cdot (z \cdot h)$ for all $(g, z) \in \G \times_{\G^{(0)}} Z$ and $(z, h) \in Z \times_{\H^{(0)}} \H$;
    \item The map $Z/\G \to \H^{(0)}$ induced by $s_Z$ is a homeomorphism;
    \item The map $Z/\H \to \G^{(0)}$ induced by $r_Z$ is a homeomorphism.
\end{enumerate}
\end{Def}

The following theorem is proved in \cite[Theorem 2.8]{MRW1987}.

\begin{Thm}
If two locally compact, Hausdorff, and \'{e}tale groupoids $\G$ and $\H$ are Morita equivalent, then their reduced groupoid $C^*$-algebras $C^*_r(\G)$ and $C^*_r(\H)$ are strongly Morita equivalent, and hence share the same $K$-theory. \qed
\end{Thm}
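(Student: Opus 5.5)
The plan is the Muhly--Renault--Williams linking groupoid argument. Fix a $(\G,\H)$-equivalence bispace $Z$ as in \Cref{def:Morita_equivalence}. Form the linking groupoid
\[
L = \G \sqcup Z \sqcup Z^{\mathrm{op}} \sqcup \H, \qquad L^{(0)} = \G^{(0)} \sqcup \H^{(0)}.
\]
Here $Z^{\mathrm{op}}$ is a copy of $Z$ with source and range interchanged, and elements $\bar z$ with $z\in Z$. The products are the actions of $\G$ and $\H$ on $Z$ and $Z^{\mathrm{op}}$, together with the two pairings $z\bar w$ and $\bar z w$.
\begin{itemize}
\item The pairing $z\bar w$ is the unique $g\in\G$ with $g\cdot w = z$. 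It is defined when $s_Z(z)=s_Z(w)$, and exists by condition (3).
\item The pairing $\bar z w$ is the unique $h\in\H$ with $z\cdot h = w$. It is defined when $r_Z(z)=r_Z(w)$, and exists by condition (4).
\end{itemize}

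The first step is to check that these pairings are well defined and continuous. Uniqueness comes from freeness of the two actions. Continuity of $(z,w)\mapsto z\bar w$ on $Z\times_{\H^{(0)}}Z$ follows from properness: the map $(g,w)\mapsto(g\cdot w,w)$ is a proper continuous injection onto this set, hence a homeomorphism onto a closed subspace. Associativity then reduces to the commutation of the actions, condition (2).

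The second step is to show that $L$ is locally compact, Hausdorff and \'etale. Local compactness and the Hausdorff property hold on each piece. For $Z$, note that $r_Z$ is a local homeomorphism: since $\G$ is \'etale and acts freely and properly, the quotient $Z\to Z/\G\cong\H^{(0)}$ is a local homeomorphism. Hence $Z$ is locally compact and Hausdorff as well. The \'etale property for $L$ then holds because $r$ and $s$ restricted to each summand are local homeomorphisms.

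The third step is to identify the corners. The characteristic functions $p=1_{\G^{(0)}}$ and $q=1_{\H^{(0)}}$ are multiplier projections of $C^*_r(L)$, since they are clopen subsets of the unit space. We then show
\[
pC^*_r(L)p=C^*_r(\G), \qquad qC^*_r(L)q=C^*_r(\H).
\]
These follow because $L|_{\G^{(0)}}=\G$ and the regular representations restrict compatibly: the fibers $L_x$ for $x\in\G^{(0)}$ split as $\G_x\sqcup Z^{\mathrm{op}}_x$. Next we show that $p$ and $q$ are full. This uses surjectivity of the anchor maps: every unit of $\H^{(0)}$ is connected in $L$ to a unit of $\G^{(0)}$, so the ideal generated by $p$ contains $C_c(\H)$ by an approximate-unit / partition-of-unity argument on compact sets. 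Complementary full corners are strongly Morita equivalent, with imprimitivity bimodule the closure of $C_c(Z)$. The $K$-theory isomorphism is then the standard consequence of strong Morita equivalence for $\sigma$-unital algebras, which holds since our groupoids are second countable, or via Brown's stable isomorphism theorem.

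The main obstacle is the fullness of the corners in the reduced completion, together with verifying that the reduced norm of $L$ restricts to the reduced norms on the corners. I would handle the latter by comparing the regular representations on $\ell^2(L_x)$ directly. I would handle the former by showing that $C_c(Z)*C_c(Z^{\mathrm{op}})$ spans a dense subspace of $C_c(\G)$, using local sections of the étale maps and a partition of unity.
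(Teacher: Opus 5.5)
Your argument is correct, but it takes a different route from the one the paper relies on. The paper gives no proof and only cites \cite[Theorem 2.8]{MRW1987}. There, $C_c(Z)$ is completed directly to an imprimitivity bimodule: explicit $C_c(\G)$- and $C_c(\H)$-valued inner products are written down, and positivity and boundedness are checked representation-theoretically, via Renault's disintegration theorem. That theorem is stated for the \emph{full} $C^*$-algebras of second countable groupoids with Haar systems, so the reduced statement recorded in the paper needs an additional argument. Your linking-groupoid proof gives the reduced version directly; this is essentially the Sims--Williams approach. Once $L$ is a locally compact Hausdorff \'etale groupoid, $pC^*_r(L)p=C^*_r(\G)$ is an elementary comparison of regular representations, and fullness is a computation with bisections. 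No disintegration theorem and no second countability are needed, which matters here because $G(X,\Ga)$ is not second countable. In exchange, the MRW route gives the full-algebra statement and an explicit bimodule structure on $C_c(Z)$ without building $L$.

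A few points to tighten.
\begin{enumerate}
\item Take local compactness and the Hausdorff property of $Z$ as part of the definition of equivalence, as MRW do. The standard proof that the orbit map of a free proper \'etale action is a local homeomorphism uses them. Also, being locally homeomorphic to $\H^{(0)}$ would not make $Z$ Hausdorff anyway.
\item You need \emph{both} anchors to be local homeomorphisms. The quotient by $\G$ is $s_Z$, not $r_Z$; $r_Z$ follows symmetrically from the $\H$-action and condition (4).
\item In the norm comparison, the supremum also runs over units $y\in\H^{(0)}$, where $C_c(\G)$ acts on $\ell^2(s_Z^{-1}(y))$. Since $s_Z^{-1}(y)$ is a single free $\G$-orbit, this is a copy of a regular representation of $\G$, so the norm does not grow.
\item Fullness is easier than you expect. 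Using open bisections $U\subseteq Z$ of $L$ and $B\subseteq\H$, one gets $\mathrm{span}\big(C_c(Z^{\mathrm{op}})*C_c(Z)\big)=C_c(\H)$ exactly, not just densely.
\item The groupoids this paper cares about are not second countable (their unit space is $\beta(X/\Ga)$), so second countability cannot justify the $K$-theory step. Use $\sigma$-unitality, which follows from $\sigma$-compactness, or Exel's Morita invariance of $K$-theory, which needs no countability hypothesis.
\end{enumerate}
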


To provide some intuition for the Morita equivalence of groupoids, we discuss the following simple example.

\begin{Exa}\label{def:matrix_groupoid}
Let $\Ga$ be a discrete group and $n$ be a positive integer. The \emph{matrix groupoid} over $\Ga$ of size $n$, denoted by $M_n(\Ga)$, is defined as follows. The unit space is the finite discrete set $X_n = \{1, 2, \dots, n\}$. The space of morphisms is the product $X_n \times \Ga \times X_n$. 

For any element $(i, \gamma, j) \in M_n(\Ga)$, the range and source maps are given by $r(i, \gamma, j) = i$ and $s(i, \gamma, j) = j$, respectively. Two elements $(i, \gamma, j)$ and $(k, \eta, l)$ are composable if and only if $j = k$, and their composition is defined by
$$(i, \gamma, j) \circ (j, \eta, l) = (i, \gamma\eta, l).$$
The inverse map is given by $(i, \gamma, j)^{-1} = (j, \gamma^{-1}, i)$.
\end{Exa}

\begin{Exa}\label{exa: Mn Morita}
Let $\Ga$ be a discrete group and $M_n(\Ga)$ be the matrix groupoid of size $n$ as introduced in \Cref{def:matrix_groupoid}. We show that $M_n(\Ga)$ and $\Ga$ (as a groupoid) are Morita equivalent. 

We construct an equivalence bispace $Z = X_n \times \Ga$, where $X_n = \{1, \dots, n\}$. 
First, we define the left $M_n(\Ga)$-action. We define $r_Z: Z \to X_n$ to be the projection onto the first coordinate: $r_Z(i, g) = i$. The fibered product is 
$$M_n(\Ga) \times_{X_n} Z = \{ ((i, \gamma, j), (k, g)) \in M_n(\Ga) \times Z \mid j = k \}.$$
The left action is defined by $(i, \gamma, j) \cdot (j, g) = (i, \gamma g)$. It is easy to see that $r_Z(i, \gamma g) = i$, matching the range of $(i, \gamma, j)$. If $(i, \gamma, j) \cdot (j, g) = (j, g)$, then $i=j$ and $\gamma g = g$, which implies $\gamma = e$, meaning $(i, \gamma, j)=(j, e, j)$ is a unit. 

Define $s_Z: Z \to \{*\}$ to be the canonical projection. The fibered product $Z \times_{\{*\}} \Ga$ is simply $Z \times \Ga$. The right action is defined by right translation on the second coordinate: $(i, g) \cdot h = (i, gh)$. This action is clearly free since $gh = g$ implies $h = e$. Since both $M_n(\Ga)$ and $\Ga$ are discrete groupoids, and the actions are defined via group multiplications which are proper maps, both the left and right actions are proper. 

It is easy to verify the remaining conditions for Morita equivalence, thus we leave the details to the reader. This proves that $M_n(\Ga)$ is Morita equivalent to $\Ga$.
\end{Exa}

The following lemma is perhaps folklore among experts in the theory of groupoid $C^*$-algebras. However, for the sake of completeness and for the reader's convenience, we provide a self-contained proof here, adapted to our specific setting.

\begin{Lem}\label{lem:lsc}
Let $\G$ be an \'etale groupoid and $f \in C_c(\G)$. The map $\G^{(0)} \to \IR$ given by $u \mapsto \|\pi_u(f)\|$ is lower semi-continuous, where $\pi_u$ is the left regular representation of $\G$ on $\ell^2(\G_u)$.
\end{Lem}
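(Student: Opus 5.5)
The map $u\mapsto\|\pi_u(f)\|$ is a supremum of functions, so I will write it that way and show each function is lower semi-continuous. Recall $\pi_u(f)\xi(g)=\sum_{h\in\G_u} f(gh^{-1})\xi(h)$ for $\xi\in\ell^2(\G_u)$, $g\in\G_u$. Since finitely supported vectors are dense,
\[
\|\pi_u(f)\|=\sup\bigl\{|\langle \pi_u(f)\xi,\eta\rangle| : \xi,\eta\in C_c(\G_u),\ \|\xi\|,\|\eta\|\le 1\bigr\}.
\]
Fix $u_0$ and $\varepsilon>0$. Choose finitely supported unit vectors $\xi,\eta$ on $\G_{u_0}$ with $|\langle\pi_{u_0}(f)\xi,\eta\rangle|>\|\pi_{u_0}(f)\|-\varepsilon$. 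It then suffices to transport $\xi$ and $\eta$ to nearby fibers $\G_u$ so that the matrix coefficient varies continuously.

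The transport uses the étale structure. Let $F=\supp\xi\cup\supp\eta\subset\G_{u_0}$, a finite set. For each $h\in F$ pick an open bisection $U_h\ni h$; shrinking these, we may assume they are pairwise disjoint (Hausdorffness). Then $s(U_h)$ is an open neighbourhood of $u_0$ and $s|_{U_h}$ is a homeomorphism onto it. Set $W=\bigcap_{h\in F}s(U_h)$, and for $u\in W$ let $h(u)=(s|_{U_h})^{-1}(u)\in\G_u$. The points $h(u)$, $h\in F$, are distinct. Define $\xi_u(h(u))=\xi(h)$ and $\eta_u(h(u))=\eta(h)$, with value zero elsewhere. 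These are unit vectors in $\ell^2(\G_u)$, and
\[
\langle\pi_u(f)\xi_u,\eta_u\rangle=\sum_{g,h\in F} f\bigl(g(u)h(u)^{-1}\bigr)\,\xi(h)\overline{\eta(g)}.
\]
The maps $u\mapsto g(u)h(u)^{-1}$ are continuous from $W$ into $\G$, because multiplication and inversion are continuous, and $f$ is continuous. So the right-hand side is a continuous function of $u$ on $W$, and it equals $\langle\pi_{u_0}(f)\xi,\eta\rangle$ at $u=u_0$. Hence there is a neighbourhood $V\subset W$ of $u_0$ such that $\|\pi_u(f)\|\ge|\langle\pi_u(f)\xi_u,\eta_u\rangle|>\|\pi_{u_0}(f)\|-2\varepsilon$ for all $u\in V$. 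This is exactly lower semi-continuity at $u_0$.

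The only delicate step is the transport in the second paragraph. We need the local sections $h(u)$ to remain distinct, so that $\xi_u$ stays a unit vector; disjointness of the bisections $U_h$ guarantees this. We also need the matrix coefficient to depend continuously on $u$. This works because only finitely many group elements are involved and $f$ is continuous on $\G$. The argument never requires controlling all of $\G_u$. That is why we get only lower, not upper, semi-continuity: new arrows may appear in nearby fibers.
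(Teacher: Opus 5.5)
Your proof is correct and follows essentially the same route as the paper: transport a finitely supported, nearly norming vector from $\G_{u_0}$ to nearby source fibres along pairwise disjoint open bisections, then use continuity in $u$ of a finite expression. The only difference is that you test against the matrix coefficient $\langle\pi_u(f)\xi_u,\eta_u\rangle$ with both vectors transported, while the paper uses $\|\pi_u(f)\xi_u\|_2$ and so has to track the moving support of $\pi_u(f)\xi_u$ through a cover of $\supp(f)$ by bisections; your version makes the continuity step immediate, since only the finitely many arrows $g(u)h(u)^{-1}$ ever appear.
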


\begin{proof}
Fix $u_0 \in \G^{(0)}$ and let $\lambda > 0$ be a real number such that $\|\pi_{u_0}(f)\| > \lambda$. By the definition of the operator norm on the Hilbert space $\ell^2(\G_{u_0})$, there exists a finitely supported unit vector $\xi \in C_c(\G_{u_0})$ such that $\|\pi_{u_0}(f)\xi\|_2 > \lambda$. Let $\supp(\xi) = \{g_1, \dots, g_n\} \subseteq \G_{u_0}$. Because $\G$ is an \'etale groupoid, the source map $s: \G \to \G^{(0)}$ is a local homeomorphism. Thus, we can find mutually disjoint open neighborhoods $B_1, \cdots, B_n$ of $g_1, \cdots, g_n$ in $\G$ such that $g_i \in B_i$ for each $1 \le i \le n$, and $s(B_1) = \dots = s(B_n) = U$ is an open neighborhood of $u_0$ in $\G^{(0)}$. 
For any $u \in U$, with a little abuse of notation, we denote by $g_i(u)$ the unique element in $B_i \cap \G_u$. Notice that $g_i: U\to B_i$ is the inverse map of $s|_{B_i}$ which is clearly a homeomorphism. We can define a continuous field of unit vectors $\xi_u \in C_c(\G_u)$ by setting $\xi_u(g_i(u)) = \xi(g_i)$ and $\xi_u(g) = 0$ for all other $g \in G_u$. Clearly, $\|\xi_u\|_2 = \|\xi\|_2 = 1$ for all $u \in U$.

Consider the vector $\eta_u = \pi_u(f)\xi_u \in \ell^2(\G_u)$. Its value at an element $h \in \G_u$ is given by the groupoid convolution
$$ \eta_u(h) = \sum_{g \in \G_u} f(hg^{-1}) \xi_u(g) = \sum_{i=1}^n f(h g_i(u)^{-1}) \xi(g_i). $$
Because $f$ has compact support and $\G$ is \'etale, the support of $f$ can be covered by finitely many open sets, say $O_1, \dots, O_m$ such that $s|_{O_i}$ is a homeomorphism. If $\eta_u(h) \neq 0$, then $h g_i(u)^{-1} \in \supp(f)$ for some $i$, which implies $h \in \supp(f) \cdot g_i(u)$. Thus, the non-zero values of $\eta_u$ are entirely supported on the finite family of open sets $\{ O_j \cdot g_i(U) \mid 1 \le i \le n, 1 \le j \le m \}$. By definition, there is at most one element $h\in O_j \cdot g_i(U)$ such that $\eta_u(h)\ne 0$. To sum up, $\eta_u$ is also finitely supported.

Since the groupoid multiplication, inversion, the function $f$ and the maps $g_i$ are all continuous, this guarantees that
$$ \|\eta_u\|_2^2 = \sum_{h \in \G_u} |\eta_u(h)|^2 = \sum_{h \in \G_u} \left| \sum_{i=1}^n f(h g_i(u)^{-1}) \xi(g_i) \right|^2 $$
is a finite sum of continuous functions on $U$. Consequently, the map $u \mapsto \|\eta_u\|_2$ is also continuous. Since $\|\eta_{u_0}\|_2 = \|\pi_{u_0}(f)\xi\|_2 > \lambda$, the continuity guarantees the existence of a smaller open neighborhood $V \subseteq U$ containing $u_0$ such that $\|\eta_u\|_2 > \lambda$ for all $u \in V$. 

For any $u \in V$, since $\xi_u$ is a unit vector in $\ell^2(\G_u)$, we have
$$ \|\pi_u(f)\| \ge \|\pi_u(f)\xi_u\|_2 = \|\eta_u\|_2 > \lambda. $$
This proves that the set $\{ u \in \G^{(0)} \mid \|\pi_u(f)\| > \lambda \}$ is open. By definition, the map $u \mapsto \|\pi_u(f)\|$ is lower semi-continuous on $\G^{(0)}$.
\end{proof}

\subsection{Equivariant Roe algebras and the assembly map}
\label{subsec:equivariant_roe}

To formulate the equivariant coarse Baum--Connes conjecture, we first briefly recall the definitions of the equivariant Roe algebra and the equivariant localization algebra. We refer the reader to \cite{FW2016, HIT2020, FZ2024} for more comprehensive details.

\begin{Def}\label{def:ample_module}
Let $X$ be a proper metric space equipped with a proper, free, isometric right $\Gamma$-action. An \emph{equivariant ample $X$-module} is a separable infinite-dimensional Hilbert space $\mathcal{H}_X$ equipped with:
\begin{enumerate}
    \item a faithful, non-degenerate $*$-representation of $C_0(X)$ into $\mathbb{B}(\mathcal{H}_X)$;
    \item a unitary representation $u$ of $\Gamma$ on $\mathcal{H}_X$,
\end{enumerate}
such that
\begin{itemize}
    \item $u_\gamma f u_\gamma^* = \gamma\cdot f$ for all $f \in C_0(X)$ and $\gamma \in \Gamma$;
    \item no non-zero function $f \in C_0(X)$ acts as a compact operator on $\mathcal{H}_X$.
    \item for any finite subgroup $F$ of $\Gamma$ and any $F$-invariant Borel subset $E$ of $X$ there is a Hilbert space $\H_E$ equipped with the trivial representation of $F$ such that $\chi_E \H_X$ is isomorphic to $\H_E\otimes \ell^2(F)$ as $F$-representations.
\end{itemize}
\end{Def}

It is a standard fact that such an equivariant ample module always exists and is unique up to equivariant unitary equivalence. From now on, we fix an equivariant ample $X$-module $\mathcal{H}_X$.

\begin{Def}\label{def:roe_algebra}
Let $T \in \B(\mathcal{H}_X)$ be a bounded linear operator.
\begin{enumerate}
    \item The \emph{propagation} of $T$ is defined to be the infimum of all $R \ge 0$ such that $f T g = 0$ for any pair of functions $f, g \in C_0(X)$ with $d_X(\mathrm{supp}(f), \mathrm{supp}(g)) > R$. If such an $R$ exists, $T$ is said to have \emph{finite propagation}.
    \item $T$ is said to be \emph{locally compact} if both $fT$ and $Tf$ are compact operators for all $f \in C_0(X)$.
    \item $T$ is said to be \emph{$\Gamma$-invariant} if $u_\gamma T u_\gamma^* = T$ for all $\gamma \in \Gamma$.
\end{enumerate}
The \emph{algebraic equivariant Roe algebra}, denoted by $\mathbb{C}[X]^\Gamma$, is defined as the $*$-algebra consisting of all locally compact, $\Gamma$-invariant operators on $\mathcal{H}_X$ with finite propagation. The \emph{equivariant Roe algebra} $C^*(X)^\Gamma$ is defined as the operator norm closure of $\mathbb{C}[X]^\Gamma$ in $\B(\mathcal{H}_X)$. 
\end{Def}

\begin{Def}\label{def:localization_algebra}
The \emph{equivariant localization algebra}, denoted by $C^*_L(X)^\Gamma$, is defined as the $C^*$-algebra generated by all bounded, uniformly norm-continuous functions $f: [0, \infty) \to \IC[X]^\Gamma$ such that $\prop(f(t))\to0$ as $t\to\infty$.
\end{Def}

Evaluation at $t = 0$ provides a canonical $*$-homomorphism:
\[ e: C^*_L(X)^\Gamma \to C^*(X)^\Gamma, \quad e(f) = f(0). \]
This evaluation map naturally induces a homomorphism on the level of $K$-theory:
\[ e_*: K_*(C^*_L(X)^\Gamma) \to K_*(C^*(X)^\Gamma). \]

\begin{Def}[Rips complex]
For each $d\geq 0$, the \emph{Rips complex} of $X$ at scale $d$, denoted $P_d(X)$, consists as a set of all formal sums
$$z=\sum_{z\in X}t_xx$$
such that each $t_x$ is in $[0,1]$, such that $\sum_{x\in X}t_x=1$, and such that the support of $x$ defined by $\supp(z):=\{x\in X\,|\,t_x\ne0\}$ has diameter at most $d$.
\end{Def}

The Rips complex is made into a metric space under the semi-spherical metric \cite{HIT2020}, which makes $P_d(X)$ a proper $\Ga$-space. It is proved in \cite{HIT2020} that the canonical inclusion $i: X\to P_d(X)$ is an equivariant coarse equivalence for each $d\geq0$. Then the equivariant coarse Baum--Connes conjecture states as follows:

\begin{con}[The equivariant coarse Baum--Connes conjecture]
Let $X$ be a proper metric space with bounded geometry equipped with a proper and isometric action of a discrete group $\Gamma$. The equivariant coarse assembly map induced by the evaluation map is an isomorphism, i.e.,
$$\mu_X^{\Ga}:\lim_{d\to\infty}K_*(C^*_L(P_d(X))^{\Ga})\to\lim_{d\to\infty}K_*(C^*(P_d(X))^{\Ga})\cong K_*(C^*(X)^{\Ga})$$
is an isomorphism.
\end{con}

\subsection{A technical lemma}

Throughout this paper, we frequently construct maps on discrete fibered products and extend them to their Stone-\v{C}ech compactifications. While the Stone-\v{C}ech compactification does not commute with fibered products in general, the following lemma identifies a special case where this commutation holds canonically.

\begin{Lem}\label{lem:key lemma}
Let $A, B$, and $C$ be discrete spaces, and let $p_A: A \to C$ and $p_B: B \to C$ be maps. Suppose there exists $N \in \mathbb{N}$ such that $\# p_A^{-1}(c) \le N$ for all $c \in C$. Then the canonical continuous extension of the inclusion $A \times_C B \to \beta A \times_{\beta C} \beta B$ extends to a homeomorphism
\[ \beta(A \times_C B) \to \beta A \times_{\beta C} \beta B. \]
\end{Lem}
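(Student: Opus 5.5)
We will show that the extended map $\Phi:\beta(A\times_C B)\to \beta A\times_{\beta C}\beta B$ is a continuous bijection between compact Hausdorff spaces; it is then automatically a homeomorphism. First, $\Phi$ exists: the map $A\times_C B\to \beta A\times\beta B$, $(a,b)\mapsto(a,b)$, extends continuously to $\beta(A\times_C B)$ by the universal property. Its image lies in the fibered product. Indeed, the two maps $\beta p_A\circ\mathrm{pr}_1\circ\Phi$ and $\beta p_B\circ\mathrm{pr}_2\circ\Phi$ agree on the dense subset $A\times_C B$ and take values in the Hausdorff space $\beta C$, so they agree everywhere. The target $\beta A\times_{\beta C}\beta B$ is closed in $\beta A\times\beta B$, hence compact Hausdorff.

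\textbf{Surjectivity.} The image of $\Phi$ is compact and contains $A\times_C B$, so it suffices to show that $A\times_C B$ is dense in $\beta A\times_{\beta C}\beta B$. Take a point $(\omega,\eta)$ with $\beta p_A(\omega)=\beta p_B(\eta)$, and a basic neighborhood $\overline{S}\times\overline{T}$ with $S\in\omega$ and $T\in\eta$ (viewing points as ultrafilters, closures as clopen sets). We need some $a\in S$ and $b\in T$ with $p_A(a)=p_B(b)$, that is, $p_A(S)\cap p_B(T)\neq\emptyset$. Since $p_A(S)\in\beta p_A(\omega)=\beta p_B(\eta)$ and $p_B(T)\in\beta p_B(\eta)$, both sets belong to the same ultrafilter, so their intersection is nonempty. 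This step does not use the bound $N$.

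\textbf{Injectivity.} This is the main obstacle and is where the bound $N$ enters. Let $\mu\neq\nu$ be ultrafilters on $A\times_C B$. We must find disjoint sets $E\in\mu$ and $F\in\nu$ whose projections separate either the $\beta A$- or the $\beta B$-coordinates. The plan is to colour $A$ with $N$ colours so that $p_A$ is injective on each colour class; such a colouring exists by choosing, in each fibre $p_A^{-1}(c)$, an injection into $\{1,\dots,N\}$. Write $A=A_1\sqcup\dots\sqcup A_N$ accordingly. Then $A\times_C B$ splits into the finitely many pieces $(A_i\times A)\times_C B$, that is, the pairs $(a,b)$ with $a\in A_i$. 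An ultrafilter contains exactly one piece. If $\mu$ and $\nu$ contain different pieces, the $A$-coordinates are separated by the disjoint sets $A_i$ and $A_j$. Otherwise both contain the piece for some $i$. On that piece the projection to $B$, $(a,b)\mapsto b$, is injective, because $a$ is the unique element of $A_i$ over $p_B(b)$. Hence $E\mapsto\mathrm{pr}_B(E)$ carries disjoint subsets of the piece to disjoint subsets of $B$. Choosing disjoint $E\in\mu$ and $F\in\nu$ inside the piece, we get $\mathrm{pr}_B(E)\in\mathrm{pr}_{B*}\mu$ and $\mathrm{pr}_B(F)\in\mathrm{pr}_{B*}\nu$ disjoint, so the $\beta B$-coordinates of $\Phi(\mu)$ and $\Phi(\nu)$ differ.

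Combining the two steps, $\Phi$ is a continuous bijection from a compact space onto a Hausdorff space, hence a homeomorphism. It restricts to the identity on $A\times_C B$, as required.
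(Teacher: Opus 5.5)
Your argument is correct. It shares the paper's combinatorial core, the partition $A=A_1\sqcup\dots\sqcup A_N$ with $p_A$ injective on each $A_i$, but the framework around it is different.

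The paper works with function algebras via Gelfand duality. It identifies $C(\beta A\times_{\beta C}\beta B)$ with $\ell^\infty(A)\otimes_{\ell^\infty(C)}\ell^\infty(B)$ and computes each summand as $\ell^\infty(A_i)\otimes_{\ell^\infty(C)}\ell^\infty(B)\cong\chi_{p_B^{-1}(p_A(A_i))}\ell^\infty(B)\cong\ell^\infty(A_i\times_C B)$. This shows in one stroke that the restriction map to $\ell^\infty(A\times_C B)$ is an isomorphism.

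You stay on the space side and show with ultrafilters that $\Phi$ is a continuous bijection of compact Hausdorff spaces. Your injectivity step, that $\mathrm{pr}_B$ is injective on each piece $A_i\times_C B$, is exactly the dual of the paper's identification of the $i$-th summand.

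What your route buys:
\begin{enumerate}
\item It is fully elementary. You need neither the duality $C(X\times_Z Y)\cong C(X)\otimes_{C(Z)}C(Y)$ nor the computation of a corner tensor product $e\,\ell^\infty(C)\otimes_{\ell^\infty(C)}\ell^\infty(B)$.
\item It isolates where the hypothesis is used. Density of $A\times_C B$ in the fibred product (surjectivity of $\Phi$, dually injectivity of the restriction homomorphism) holds for arbitrary maps $p_A,p_B$. The uniform fibre bound is needed only for injectivity of $\Phi$. For instance, with $C$ a point, $\beta(A\times B)\to\beta A\times\beta B$ is not injective for infinite $A,B$.
\end{enumerate}
In the paper these two halves are absorbed into a single algebraic computation, which is shorter once the standard facts are granted.

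One small slip: the pieces should be written $A_i\times_C B$, not $(A_i\times A)\times_C B$. Your parenthetical description makes the intended meaning clear.
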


\begin{proof}
By Gelfand duality, to prove that the canonical map induces a homeomorphism between the spaces, it suffices to prove that it induces a $C^*$-algebra isomorphism between their corresponding continuous function algebras. The $C^*$-algebra corresponding to the space $\beta(A \times_C B)$ is precisely $\ell^\infty(A \times_C B)$. On the other hand, the $C^*$-algebra corresponding to the fibered product $\beta A \times_{\beta C} \beta B$ is the amalgamated tensor product $C(\beta A) \otimes_{C(\beta C)} C(\beta B)$, which is isomorphic to $\ell^\infty(A) \otimes_{\ell^\infty(C)} \ell^\infty(B)$. Thus, our goal is to show the inclusion
\begin{equation}\label{eq:tensor_fiber_iso}
\ell^\infty(A) \otimes_{\ell^\infty(C)} \ell^\infty(B) \to \ell^\infty(A \times_C B).
\end{equation}
is an isomorphism.

By the assumption that $\# p_A^{-1}(c) \le N$ for all $c \in C$, we can partition the discrete space $A$ into $N$ mutually disjoint subsets
\[ A = A_1 \sqcup A_2 \sqcup \cdots \sqcup A_N, \]
such that the restriction map $p_A|_{A_i}: A_i \to C$ is injective for each $1 \le i \le N$. 
Since this is a finite disjoint union, we have the direct sum decompositions $\ell^\infty(A \times_C B) = \bigoplus_{i=1}^N \ell^\infty(A_i \times_C B)$ and $\ell^\infty(A) \otimes_{\ell^\infty(C)} \ell^\infty(B) \cong \bigoplus_{i=1}^N \left( \ell^\infty(A_i) \otimes_{\ell^\infty(C)} \ell^\infty(B) \right). $

Fix an integer $1 \le i \le N$. Since $p_A|_{A_i}: A_i \to C$ is injective, we can identify $A_i$ with its image $p_A(A_i) \subseteq C$. Under this identification, the algebra $\ell^\infty(A_i)$ is $C^*$-subalgebra of $\ell^\infty(C)$. Consequently, the $i$-th summand in the tensor product simplifies to:
\[ \ell^\infty(A_i) \otimes_{\ell^\infty(C)} \ell^\infty(B) \cong \chi_{A_i} \ell^\infty(C) \otimes_{\ell^\infty(C)} \ell^\infty(B) \cong \chi_{p_B^{-1}(A_i)} \ell^\infty(B)\cong \ell^{\infty}(p_B^{-1}(A_i)). \]
On the other hand, the projection onto the second factor from $A_i \times_C B$ to $B$ is canonically injective since $p_A$ is injective on $A_i$. Moreover, the image of this projection lies in $p_B^{-1}(A_i)$. Therefore, we obtain a natural isomorphism:
\[ \ell^{\infty}(p_B^{-1}(A_i)) \cong \ell^\infty(A_i \times_C B). \]
To sum up, we obtain
\[ \ell^\infty(A) \otimes_{\ell^\infty(C)} \ell^\infty(B) \cong \bigoplus_{i=1}^N \ell^\infty(A_i \times_C B) \cong \ell^\infty(A \times_C B). \]
This shows the isomorphism \eqref{eq:tensor_fiber_iso}.
\end{proof}

\section{A coarse groupoid associated with $X\cal \Ga$}

In this section, we construct a groupoid associated with a system $X\cal\Ga$ and show its relationship with some classic cases. 

Let $X$ be a discrete metric space with bounded geometry. Denote by $\E$ the coarse structure associated with the metric on $X$, i.e.,
$$\E=\left\{E\subseteq X\times X\,\Big|\, \sup_{(x,y)\in E} d(x,y)<\infty\right\}.$$
Let $\Ga$ be a countable discrete group that admits a \emph{proper, free and isometric right} action on $X$. Then $\Ga$ also admits a right action on $X\times X$ by using the diagonal action, i.e.,
$$(x,y)\gamma=(x\gamma,y\gamma).$$
For any discrete metric space $Y$, denote by $\beta(Y)$ the Stone-\v{C}ech compactification of $Y$. It is well-known that $\beta(Y)$ can be realized by the Gelfand spectrum of $\ell^\infty(Y)$. 

\begin{Def}
Let $X$ be a discrete metric space with bounded geometry, $\Ga$ a countable discrete group acting properly, freely and isometrically on $X$. The \emph{equivariant coarse groupoid} is defined by
$$G(X, \Ga)=\bigcup_{R\geq 0}\overline{\Delta_R/\Ga}\subseteq \beta((X\times X)/\Ga),$$
where $\Delta_R = \{(x,y) \in X \times X \mid d(x,y) \le R\}$ is the $R$-entourage.
\end{Def}

In the rest of this subsection, we shall define the groupoid structure on $G(X,\Ga)$. Let us first define the structural maps of the groupoid $G(X, \Ga)$ at the discrete level. Consider the natural projections $p_1, p_2: X \times X \to X$ given by $p_1(x,y) = x$ and $p_2(x,y) = y$. The diagonal action on $X \times X$ satisfies $(x,y)\gamma = (x\gamma, y\gamma)$. Thus, the projections are $\Ga$-equivariant, i.e., $p_i((x,y)\gamma) = p_i(x,y)\gamma$ for $i=1,2$. Thus, these projections descend to well-defined maps on the quotient spaces:
$$r_0, s_0: (X \times X)/\Ga \to X/\Ga$$
defined by $r_0([x,y]) = [x]$ and $s_0([x,y]) = [y]$.

By the universal property of the Stone-\v{C}ech compactification, any map between discrete spaces extends uniquely to a continuous map between their compactifications. Therefore, the continuous maps $r_0$ and $s_0$ extend uniquely to continuous surjective maps:
$$r, s: \beta((X \times X)/\Ga) \to \beta(X/\Ga).$$
By restricting these maps to the subspace $G(X, \Ga)$, we obtain the \emph{range} and \emph{source} maps for the equivariant coarse groupoid.

Similarly, we define the inverse map. The involution $i_0: X \times X \to X \times X$ given by $i_0(x,y) = (y,x)$ is obviously $\Ga$-equivariant and descends to a map $i_0: (X \times X)/\Ga \to (X \times X)/\Ga$, where $i_0([x,y]) = [y,x]$. This extends to a continuous involution on the Stone-\v{C}ech compactification:
$$i: \beta((X \times X)/\Ga) \to \beta((X \times X)/\Ga).$$
Since $i_0(\Delta_R/\Ga) = \Delta_R/\Ga$, its continuous extension restricts to an involution on $G(X, \Ga)$. For any $g \in G(X, \Ga)$, we denote its inverse by $g^{-1} = i(g)$. It is straightforward to check that $r(g^{-1}) = s(g)$ and $s(g^{-1}) = r(g)$.

Finally, the unit map is induced by the diagonal inclusion. The map $u_0: X \to X \times X$, which is clearly $\Ga$-equivariant, given by $u_0(x) = (x,x)$ descends to $u_0: X/\Ga \to (X \times X)/\Ga$, mapping $[x]$ to $[x,x]$. Its continuous extension $u: \beta(X/\Ga) \to \beta((X \times X)/\Ga)$ maps the unit space $\beta(X/\Ga)$ homeomorphically onto $\overline{\Delta_0/\Ga} \subseteq G(X, \Ga)$. We will often identify the unit space $\beta(X/\Ga)$ with its image in $G(X, \Ga)$.

Let $[x,y], [y',z] \in (X \times X)/\Ga$ be two elements such that $s_0([x,y]) = r_0([y',z])$, which means $[y] = [y']$ in $X/\Ga$. Then there exists a \emph{unique} element $\gamma \in \Ga$ such that $y' = y\gamma$. We define their composition as:
$$[x,y] \circ [y',z] := [x, z\gamma^{-1}].$$
Since the group action on $X$ is assumed to be free, it is straightforward to check that the composition is independent of all choices made, and is therefore well-defined.

Let $[x,y] \in \Delta_{R_1}/\Ga$ and $[y',z] \in \Delta_{R_2}/\Ga$. By definition, we can find a representative pair $(x,y)$ such that $d(x,y) \leq R_1$. Since $y' = y\gamma$, the pair $(y', z) = (y\gamma, z)$ is a representative for $[y',z]$. Because the right action of $\Ga$ is isometric, the distance is preserved:
$$d(y, z\gamma^{-1}) = d(y\gamma, z) = d(y', z) \leq R_2.$$
By the triangle inequality on the metric space $X$, we obtain
$$d(x, z\gamma^{-1}) \leq d(x, y) + d(y, z\gamma^{-1}) \leq R_1 + R_2.$$
Therefore, the composed element $[x, z\gamma^{-1}]$ belongs to $\Delta_{R_1+R_2}/\Ga$. As a result, we can extend the composition to the whole equivariant coarse groupoid $G(X, \Ga)$. Let $G_0^{(2)}$ denote the set of composable pairs in the discrete groupoid $(X\times X)/\Ga$. For any fixed $R_1, R_2 \geq 0$, the composition
$$\circ: \left( \Delta_{R_1}/\Ga \times_{X/\Ga} \Delta_{R_2}/\Ga \right) \to \Delta_{R_1+R_2}/\Ga$$
is well-defined. Since $X$ has bounded geometry, the projection $s_0: \Delta_{R_1}/\Ga \to X/\Ga$ has uniformly finite fibers. Thus, by \Cref{lem:key lemma}, this map extends uniquely to a continuous map on the closures in $\beta((X\times X)/\Ga)$. Gluing these maps together for all $R_1, R_2 \geq 0$, we obtain a globally continuous composition map:
$$\circ: G(X, \Ga)^{(2)} \to G(X, \Ga),$$
where $G(X, \Ga)^{(2)} = \{ ([x,y], [y',z]) \in G(X, \Ga) \times G(X, \Ga) \mid [y] = [y'] \}$ is the space of composable pairs in $G(X,\Ga)$. This equips $G(X, \Ga)$ with a well-defined topological groupoid structure.

We shall next prove that the groupoid $G(X, \Ga)$ is \emph{\'etale}. The proof relies heavily on the fact that the $\Ga$-action on $X$ is free and proper and $X$ has bounded geometry. Before we can prove it, we shall first recall a colouring theorem for graphs:

\begin{Thm}[Greedy coloring algorithm]
Let $d\in\IN$ and let $X=(V,E)$ be a finite or countable infinite graph such that every vertex in $X$ has degree at most $d$. Then there exists a $(d+1)$-colouring of $V$.
\end{Thm}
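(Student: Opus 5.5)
The plan is to first treat the case of a finite graph and then pass to the countable case by a sequential greedy procedure. Note that no compactness argument is needed, since the greedy choice never revises earlier decisions.

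Enumerate the vertex set as $V=\{v_1,v_2,\dots\}$; this is possible because $V$ is finite or countably infinite. Define $c:V\to\{1,\dots,d+1\}$ recursively. Suppose $c(v_1),\dots,c(v_{n-1})$ have already been chosen so that adjacent vertices among them receive distinct colours. The vertex $v_n$ has at most $d$ neighbours in total, so at most $d$ of the earlier vertices $v_1,\dots,v_{n-1}$ are adjacent to $v_n$. These neighbours use at most $d$ distinct colours. Hence the set $\{1,\dots,d+1\}\setminus\{c(v_i)\mid i<n,\ \{v_i,v_n\}\in E\}$ is nonempty, and we let $c(v_n)$ be its minimal element. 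The recursion defines $c$ on all of $V$, because each $v\in V$ equals $v_n$ for some finite $n$.

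To verify that $c$ is a proper colouring, take any edge $\{v_i,v_j\}\in E$ and assume without loss of generality that $i<j$. At stage $j$, the colour $c(v_i)$ was excluded from the allowed set, so $c(v_j)\neq c(v_i)$. Loops, if the graph is allowed to have them, should be excluded by convention; this is the only minor point to state.

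There is no real obstacle here. The only subtlety is that in the countable case the colouring must be built by induction along an enumeration rather than all at once, and this works because the degree bound is local. In the application that follows, the graph will presumably have vertex set $X$ (or $(X\times X)/\Ga$ restricted to $\Delta_R$), with edges given by proximity, and bounded geometry will supply the uniform degree bound $d$.
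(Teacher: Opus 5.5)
Your proof is correct and is the same greedy argument the paper relies on. The paper gives no proof of its own: it cites the finite greedy colouring from the literature and remarks that a similar argument works for countable graphs. Your recursion along an enumeration $v_1,v_2,\dots$, where the degree bound leaves at most $d$ excluded colours at each stage, is exactly that extension, and it handles the finite and countably infinite cases at once.
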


The above theorem is known as the \emph{greedy coloring algorithm}, see \cite[Page 122]{Graph} for example. Although the proof in \cite[Page 122]{Graph} is for finite graphs, a similar argument holds for countably infinite graphs.
\begin{Lem}\label{lem:partition}
For any $R \ge 0$, the discrete space $\Delta_R/\Ga$ can be partitioned into a finite number of mutually disjoint subsets $V_1, V_2, \dots, V_m$ such that both $r_0|_{V_i}$ and $s_0|_{V_i}$ are injective for each $1 \le i \le m$.
\end{Lem}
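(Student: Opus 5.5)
We will prove the lemma by building a graph on the vertex set $V=\Delta_R/\Ga$ with uniformly bounded degree and then applying the greedy coloring theorem. Join two distinct vertices $[x,y]\neq[x',y']$ of $\Delta_R/\Ga$ by an edge whenever $r_0([x,y])=r_0([x',y'])$ or $s_0([x,y])=s_0([x',y'])$. Any color class of a proper coloring contains no two vertices with the same range or the same source. Hence $r_0$ and $s_0$ restricted to it are injective. The color classes $V_1,\dots,V_m$ will be the required partition, with $m=d+1$. The graph is countable because $X$ is countable, being discrete with bounded geometry.

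The main step is to bound the degree. First we show that the fibers of $r_0:\Delta_R/\Ga\to X/\Ga$ have uniformly bounded cardinality. Fix $[x]\in X/\Ga$. Every class $[x',y']$ with $[x']=[x]$ has a representative whose first coordinate is exactly $x$. Indeed, $x'=x\gamma$ for a unique $\gamma$ by freeness, so $[x',y']=[x,y'\gamma^{-1}]$. Because the action is isometric, $d(x,y'\gamma^{-1})=d(x',y')\le R$. This representative is unique. If $[x,y]=[x,z]$, then $(x,z)=(x\gamma,y\gamma)$ for some $\gamma$, and freeness forces $\gamma=e$, so $y=z$. Consequently
\[
\#r_0^{-1}([x])\le \#B(x,R)\le N_R:=\sup_{x\in X}\#B(x,R)<\infty
\]
by bounded geometry. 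The inverse map $[x,y]\mapsto[y,x]$ preserves $\Delta_R/\Ga$ and swaps $r_0$ with $s_0$, so the same bound $N_R$ holds for the fibers of $s_0$.

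It follows that each vertex has at most $(N_R-1)+(N_R-1)$ neighbours, so $d=2N_R-2$ bounds the degree. The greedy coloring theorem, in its countable version noted above, then provides a $(2N_R-1)$-coloring. This finishes the proof with $m\le 2N_R-1$, after discarding empty color classes.

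The only point needing care is the uniqueness of the representative with prescribed first coordinate, which uses freeness of the action. If the action were not free, the stabilizers would merely multiply the bound by at most $N$ under equivariant bounded geometry, but we do not need this here. Everything else is routine.
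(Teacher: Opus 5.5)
Your proof is correct and follows the paper's argument essentially verbatim: you bound the fibers of $r_0$ on $\Delta_R/\Ga$ by $N_R$ using freeness and bounded geometry, transfer the bound to $s_0$ by symmetry, and greedily color the graph joining classes with equal range or equal source, getting $m\le 2N_R-1$. Your explicit check that the representative with first coordinate $x$ is unique is the freeness step the paper compresses into one line. The closing remark on non-free actions is unneeded, and also slightly off: without freeness the fiber would be $B(x,R)/\Ga_x$, so the bound would not grow.
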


\begin{proof}
Fix a class $[x] \in X/\Ga$. The preimage $r_0^{-1}([x]) \cap (\Delta_R/\Ga)$ consists of equivalence classes $[x, y]$ such that $d(x,y) \le R$. Suppose $[x, y_1] = [x, y_2]$ are two such classes in $\Delta_R/\Ga$. Since the group action of $\Ga$ on $X$ is free, the map $y \mapsto [x,y]$ is a bijection from the closed ball $B(x, R) \subseteq X$ onto the fiber $r_0^{-1}([x]) \cap (\Delta_R/\Ga)$. Since $X$ has bounded geometry, there exists a uniform constant $N_R \in\IN$ such that $\#B(x, R) \le N_R$ for all $x \in X$. Thus, $\#(r_0^{-1}([x]) \cap (\Delta_R/\Ga)) \le N_R$. By symmetry, the exact same argument shows that $\#(s_0^{-1}([x]) \cap (\Delta_R/\Ga))\leq N_R$.

Consider a graph with vertex set $\Delta_R/\Ga$, where two distinct vertices $v, w$ are connected by an edge if $r_0(v) = r_0(w)$ or $s_0(v) = s_0(w)$. From the above argument, we conclude that any vertex shares the same range with at most $N_R-1$ other vertices, and the same source with at most $N_R-1$ other vertices. Therefore, the degree of any vertex in this graph is strictly bounded by $2N_R-2$. 

By the greedy coloring algorithm, this graph can be colored using at most $2N_R-1$ colors. Let $V_1, \dots, V_m$ be the color classes of $V$, where $m \le 2N_R - 1$. Since no two vertices in the same $V_i$ share an edge, neither $r_0$ nor $s_0$ can map two distinct elements of $V_i$ to the same point in $X/\Ga$. Hence, both $r_0$ and $s_0$ are strictly injective on each $V_i$.
\end{proof}

\begin{Thm}\label{thm:etale}
The equivariant coarse groupoid $G(X, \Ga)$ is an \'etale groupoid.
\end{Thm}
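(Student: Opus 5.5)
We have to show that $G(X,\Ga)$ is locally compact and Hausdorff, and that the range and source maps $r,s\colon G(X,\Ga)\to\beta(X/\Ga)$ are local homeomorphisms.

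Hausdorffness is inherited from $\beta((X\times X)/\Ga)$. Each $\Delta_R/\Ga$ is a subset of the discrete space $(X\times X)/\Ga$, so its closure $\overline{\Delta_R/\Ga}$ in the Stone--\v{C}ech compactification is clopen and canonically homeomorphic to $\beta(\Delta_R/\Ga)$. Hence $G(X,\Ga)$ is an increasing union of compact open subsets. This gives local compactness, and it shows that $G(X,\Ga)$ carries the inductive limit topology. It therefore suffices to check that $r$ and $s$ are local homeomorphisms on each $\overline{\Delta_R/\Ga}$.

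Fix $R\ge 0$ and apply \Cref{lem:partition} to write $\Delta_R/\Ga=V_1\sqcup\cdots\sqcup V_m$ with $r_0|_{V_i}$ and $s_0|_{V_i}$ injective. Since the partition is finite, $\overline{\Delta_R/\Ga}=\overline{V_1}\sqcup\cdots\sqcup\overline{V_m}$ is a disjoint union of clopen sets, each homeomorphic to $\beta V_i$.

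The key step is that for each $i$ the extension $r|_{\overline{V_i}}\colon\beta V_i\to\beta(X/\Ga)$ of the injection $r_0|_{V_i}\colon V_i\to X/\Ga$ is a homeomorphism onto the clopen set $\overline{r_0(V_i)}\cong\beta(r_0(V_i))$. This is the standard fact that an injective map of discrete sets extends to a homeomorphism of $\beta V_i$ onto the closure of its image. Dually, the restriction map $\ell^\infty(X/\Ga)\to\ell^\infty(V_i)$, $f\mapsto f\circ r_0$, is surjective and factors through the isomorphism $\ell^\infty(r_0(V_i))\cong\ell^\infty(V_i)$, so Gelfand duality gives the claim. The same argument applies to $s$.

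Consequently every point $g\in G(X,\Ga)$ lies in some open set $\overline{V_i}$ on which $r$ and $s$ restrict to homeomorphisms onto open subsets of $\beta(X/\Ga)$. This is exactly the condition for $r$ and $s$ to be local homeomorphisms.

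It remains to confirm continuity of the groupoid operations, which were constructed in the text via \Cref{lem:key lemma}. Continuity of inversion follows from the fact that it is the extension of a homeomorphism. The one point to double-check is that the unit space $\overline{\Delta_0/\Ga}$ is open in $G(X,\Ga)$. This holds because $\Delta_0/\Ga$ is a subset of the discrete space, so its closure is clopen.

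I expect the main subtlety to be the topological bookkeeping: showing that the topology of $G(X,\Ga)$ as a union agrees with the topology in which each $\overline{\Delta_R/\Ga}$ is open. This is settled by the clopen-ness observation above, since the closure of any subset of a discrete space is clopen in its Stone--\v{C}ech compactification.
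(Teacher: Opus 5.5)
Your proposal is correct and follows the paper's route. You use \Cref{lem:partition} to split $\Delta_R/\Ga$ into finitely many pieces $V_i$ on which $r_0$ and $s_0$ are injective, and observe that the closures $\overline{V_i}\cong\beta V_i$ are disjoint and clopen, so $r$ and $s$ restrict to homeomorphisms of $\overline{V_i}$ onto $\overline{r_0(V_i)}$ and $\overline{s_0(V_i)}$. Your extra remarks (Hausdorffness, local compactness, the Gelfand-duality justification of the homeomorphism, and openness of the unit space) only make explicit what the paper leaves implicit.
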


\begin{proof}
It suffices to show that the range map $r: G(X, \Ga) \to \beta(X/\Ga)$ is a local homeomorphism. For any fixed $R \ge 0$, Lemma \ref{lem:partition} yields a finite partition $\Delta_R/\Ga = V_1 \sqcup \dots \sqcup V_m$ consisting of discrete subsets where $r_0$ is injective. For each $i$, the restriction $r_0: V_i \to r_0(V_i)$ is a bijection between discrete spaces, which is trivially a homeomorphism. 

By the universal property of the Stone-\v{C}ech compactification, the closure $\overline{V_i}$ in $\beta((X\times X)/\Ga)$ is naturally homeomorphic to $\beta(V_i)$. The continuous extension $r: \overline{V_i} \to \overline{r_0(V_i)} \subseteq \beta(X/\Ga)$ is therefore a homeomorphism.
It is a standard property of the Stone-\v{C}ech compactification of discrete spaces that the closures of disjoint subsets remain disjoint. Therefore, $\overline{\Delta_R/\Ga} = \overline{V_1} \sqcup \dots \sqcup \overline{V_m}$, meaning each $\overline{V_i}$ is a clopen subset of $\beta((X\times X)/\Ga)$. Since $G(X, \Ga)$ is endowed with the subspace topology, each $\overline{V_i}$ is an open subset of $G(X, \Ga)$. 

Consequently, we have found an open neighborhood $\overline{V_i}$ around every point in $\overline{\Delta_R/\Ga}$ on which $r$ restricts to a homeomorphism. Since $G(X, \Ga) = \bigcup_{R\ge 0} \overline{\Delta_R/\Ga}$, the map $r$ is a local homeomorphism. The identical argument applies to the source map $s$. Thus, $G(X, \Ga)$ is \'etale.
\end{proof}

To illustrate the construction of $G(X,\Ga)$, we provide several examples below. 

\begin{Exa}[Trivial action]
Let $\Ga = \{e\}$ be the trivial group acting on $X$. The quotient space $(X \times X)/\{e\}$ is simply $X \times X$, and $\Delta_R/\{e\} = \Delta_R$. Therefore, the equivariant coarse groupoid reduces to 
$$G(X, \{e\}) = \bigcup_{R\ge 0} \overline{\Delta_R} \subseteq \beta(X \times X).$$
This is exactly the coarse groupoid $G(X)$ introduced by G.~Skandalis, J.~Tu, and G.~Yu in \cite{STY2002}.
\end{Exa}

\begin{Exa}[Group acting on itself]
Let $X = \Ga$ equipped with a proper right-invariant metric, and let $\Ga$ act on itself by right translation. 
Consider the map $\phi: (\Ga \times \Ga)/\Ga \to \Ga$ given by $\phi([g, h]) = gh^{-1}$. It is easy to see that $\phi$ is a well-defined bijection. Under this identification, the entourage $\Delta_R/\Ga$ corresponds to the set 
$$B_R(e) = \{ \gamma \in \Ga \mid |\gamma| \le R \}.$$
Since the metric on $\Ga$ is proper, $B_R(e)$ is a finite set for any $R \ge 0$. The Stone-\v{C}ech compactification of a finite set is just the set itself. Therefore, the closure $\overline{\Delta_R/\Ga}$ in $\beta((\Ga \times \Ga)/\Ga)$ is canonically identified with $B_R(e)$. Taking the union over all $R \ge 0$, we obtain:
$$G(\Ga, \Ga) = \bigcup_{R\ge 0} B_R(e) = \Ga.$$
In this case, the equivariant coarse groupoid degenerates to the group $\Ga$ itself, viewed as a groupoid over a single point.
\end{Exa}

\begin{Exa}[Subgroup action]\label{exa: subgroup action}
Let $G$ be a countable discrete group equipped with a proper right-invariant metric, and $\Ga \le G$ a subgroup acting on $G$ by right translation. 

We define a map $\psi: (G \times G)/\Ga \to G \times (G/\Ga)$ by 
$$\psi([g_1, g_2]) = (g_1 g_2^{-1}, [g_2]).$$
If we choose another representative $(g_1\gamma, g_2\gamma)$ for the orbit, we have $(g_1\gamma)(g_2\gamma)^{-1} = g_1 g_2^{-1}$ and $[g_2\gamma] = [g_2]$ in $G/\Ga$. Hence $\psi$ is a well-defined bijection. 

Under this bijection, the subset $\Delta_R/\Ga$ maps exactly to $B_R(e) \times (G/\Ga)$, where $B_R(e)$ is the $R$-ball in $G$. It is a basic fact that for any finite discrete space $F$ and any space $Y$, there is a natural homeomorphism $\beta(F \times Y) \cong F \times \beta(Y)$. Since $B_R(e)$ is finite, it immediately follows that:
$$\beta(\Delta_R/\Ga) \cong \beta(B_R(e) \times (G/\Ga)) \cong B_R(e) \times \beta(G/\Ga).$$
Taking the union over all $R \ge 0$, we conclude that:
$$G(G, \Ga) \cong \bigcup_{R\ge 0} \left( B_R(e) \times \beta(G/\Ga) \right) = G \times \beta(G/\Ga).$$
One can check that groupoid structure precisely coincides with the transformation groupoid (action groupoid) $G \ltimes \beta(G/\Ga)$ arising from the left action of $G$ on $\beta(G/\Ga)$.
\end{Exa}

\begin{Exa}[Free and cofinite action]
Let $X$ be a metric space with bounded geometry, and suppose $\Ga$ acts freely and cofinitely on $X$ by isometries. Since the action is cofinite, the orbit space $X/\Ga$ is a finite set of $n$ elements. Consequently, the unit space is finite, and $\beta(X/\Ga) = X/\Ga$.

Choose a \emph{fundamental domain} for the action, i.e., select exactly one representative $x_i \in X$ for each of the $n$ orbits. Because the action is free, every element $x \in X$ can be uniquely written as $x = x_i \gamma$ for some $i \in \{1, \dots, n\}$ and $\gamma \in \Ga$. For any orbit $[x, y] \in (X \times X)/\Ga$, we write $x = x_i \gamma_1$ and $y = x_j \gamma_2$. Using the diagonal right action, we have
$$[x, y] = [x_i \gamma_1, x_j \gamma_2] = [x_i \gamma_1 \gamma_2^{-1}, x_j].$$
We then define $\Phi: G(X,\Ga)\to M_n(\Ga)$ by $[x,y] \mapsto (i, \gamma_1 \gamma_2^{-1}, j)$, where $M_n(\Ga)$ is the matrix groupoid introduced in \Cref{def:matrix_groupoid}. This map is clearly a well-defined bijection. One can verify that $\Phi$ is a groupoid isomorphism.

This explicit isomorphism shows that $G(X, \Ga)$ is Morita equivalent to the group $\Ga$ by \Cref{exa: Mn Morita}, reflecting the well-known geometric fact that the equivariant Roe algebra of a cofinite action is stably isomorphic to the reduced group $C^*$-algebra $C^*_r(\Ga)$.
\end{Exa}

\subsection{Equivariant Roe algebras and groupoid algebras}

In this subsection, we prove the main theorem of this section, i.e., the reduced groupoid $C^*$-algebra of $G(X, \Ga)$ is canonically isomorphic to the uniform equivariant Roe algebra $C^*_u(X)^\Ga$. Recall that the algebraic equivariant uniform Roe algebra $\IC_u[X]^\Ga$ is the $*$-subalgebra of $\B(\ell^2(X))$ consisting of all finite propagation operators $T$ that are $\Ga$-invariant, i.e., $u_\gamma T u_\gamma^* = T$ and $u$ is the right regular representation. Its operator norm closure in $\B(\ell^2(X))$ is the equivariant uniform Roe algebra $C^*_u(X)^\Ga$.

\begin{Thm}\label{thm: groupoid to Roe}
Let $X$ be a metric space with bounded geometry, and $\Ga$ a discrete group acting freely, properly, and isometrically on $X$. There is a canonical $*$-isomorphism
$$ C^*_r(G(X, \Ga)) \cong C^*_u(X)^\Ga. $$
\end{Thm}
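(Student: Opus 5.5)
The plan is to mimic the Skandalis--Tu--Yu argument for $C^*_r(G(X))\cong C^*_u(X)$. First I identify $C_c(G(X,\Ga))$ with $\IC_u[X]^\Ga$. Since $G(X,\Ga)=\bigcup_R\overline{\Delta_R/\Ga}$ and each $\overline{\Delta_R/\Ga}\cong\beta(\Delta_R/\Ga)$ is compact open, a function in $C_c(G(X,\Ga))$ is the same as a bounded function $\phi$ on $(X\times X)/\Ga$ supported in some $\Delta_R/\Ga$. To such $\phi$ I associate the operator $T_\phi$ on $\ell^2(X)$ with matrix coefficients $(T_\phi)_{x,y}=\phi([x,y])$. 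Bounded geometry, via the Schur test or \Cref{lem:partition}, gives boundedness. $T_\phi$ is $\Ga$-invariant because $[x\gamma,y\gamma]=[x,y]$, and it has propagation at most $R$. Conversely, any $\Ga$-invariant finite propagation operator $T$ has $\Ga$-invariant coefficients $T_{x,y}$, uniformly bounded by $\|T\|$, which descend to a bounded function on $\Delta_R/\Ga$. So $\phi\mapsto T_\phi$ is a linear bijection $C_c(G(X,\Ga))\to\IC_u[X]^\Ga$. I would then check that it is a $*$-homomorphism. For involution, $\phi^*([x,y])=\overline{\phi([y,x])}$, which is immediate. For convolution,
\[(\phi*\psi)([x,z])=\sum_{g\in G^{[x]}}\phi(g)\psi(g^{-1}[x,z])=\sum_{y\in X}\phi([x,y])\psi([y,z]).\]
This uses that the fibre $r^{-1}([x])$ in the discrete part is in bijection with $X$ via $y\mapsto[x,y]$ (freeness), together with the composition rule $[x,y]\circ[y,z]=[x,z]$. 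Continuity of the extended product makes the identity pass to $\beta$.

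Second, I match norms. For a discrete unit $[x]\in X/\Ga$, the source fibre $G_{[x]}$ (discrete part) is $\{[y,x]:y\in X\}\cong X$. Under this identification $\pi_{[x]}(\phi)$ becomes exactly $T_\phi$ acting on $\ell^2(X)$: the matrix entry between $[y,x]$ and $[y',x]$ is $\phi([y,x][y',x]^{-1})=\phi([y,y'])$. I need to make sure fibres over discrete units contain no boundary points. This holds because $r^{-1}([x])\cap\overline{\Delta_R/\Ga}$ is finite and clopen, and by \Cref{lem:key lemma}-type arguments it equals its discrete part. Hence $\|\pi_{[x]}(\phi)\|=\|T_\phi\|$ for every $[x]\in X/\Ga$.

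The main obstacle is the reduced norm, $\|\phi\|_r=\sup_{u\in\beta(X/\Ga)}\|\pi_u(\phi)\|$, which also involves boundary points $u\in\beta(X/\Ga)\setminus X/\Ga$. One inequality, $\|T_\phi\|\le\|\phi\|_r$, is clear from the discrete units. For the reverse inequality I would show that $u\mapsto\|\pi_u(\phi)\|$ is bounded by its supremum over the dense set $X/\Ga$. \Cref{lem:lsc} gives only lower semicontinuity, which points the wrong way, so I would instead use the étale structure directly. Take a finitely supported unit vector $\xi$ in $\ell^2(G_u)$ with $\|\pi_u(\phi)\xi\|$ close to $\|\pi_u(\phi)\|$, and spread it over a neighbourhood via the local sections $g_i(\cdot)$, exactly as in the proof of \Cref{lem:lsc}. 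The resulting function $v\mapsto\|\pi_v(\phi)\xi_v\|$ is continuous near $u$, so it is approximated at some discrete unit $v\in X/\Ga$ near $u$. There the vectors $\xi_v$ are unit vectors, giving $\|\pi_u(\phi)\|-\epsilon\le\|\pi_v(\phi)\|=\|T_\phi\|$. Combining both inequalities gives $\|\phi\|_r=\|T_\phi\|$. The isometric $*$-isomorphism $C_c(G(X,\Ga))\cong\IC_u[X]^\Ga$ then extends to the completions, yielding $C^*_r(G(X,\Ga))\cong C^*_u(X)^\Ga$. Canonicity follows because the map is defined by matrix coefficients.
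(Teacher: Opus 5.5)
Your proposal is correct and follows the paper's proof: the same matrix-coefficient $*$-isomorphism $C_c(G(X,\Ga))\to\IC_u[X]^\Ga$, the unitary equivalence of $\pi_{[x]}(\phi)$ with $T_\phi$ at every discrete unit $[x]\in X/\Ga$, and density of $X/\Ga$ in $\beta(X/\Ga)$ to control the boundary units. The one slip is your remark that \Cref{lem:lsc} ``points the wrong way''. Lower semicontinuity is exactly what you need, since it makes the supremum over the dense set $X/\Ga$ equal to the supremum over $\beta(X/\Ga)$. Your local-section argument is precisely the proof of that lemma, so you can simply cite it.
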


\begin{proof}
We divide the proof into two steps.

\emph{Step 1: Algebraic isomorphism.} 
By the construction of the Stone-\v{C}ech compactification, $G(X, \Ga)$ is the disjoint union of clopen sets $\overline{\Delta_R/\Ga}$ for $R \ge 0$. A compactly supported continuous function $f \in C_c(G(X, \Ga))$ is therefore supported on some $\overline{\Delta_R/\Ga}$.

We define a map $\Phi: C_c(G(X, \Ga)) \to \mathcal{B}(\ell^2(X))$ by associating $f$ to an operator $T_f$ whose matrix coefficients are given by:
$$ \langle \delta_x, T_f \delta_y \rangle = T_f(x, y) := f([x, y]), \quad \text{for all } x, y \in X. $$
Since $f$ is supported on $\overline{\Delta_R/\Ga}$, $T_f(x,y) = 0$ whenever $d(x,y) > R$, meaning $T_f$ has finite propagation. Moreover, the definition depends only on the orbit $[x,y]$, so $T_f(x\gamma, y\gamma) = f([x\gamma, y\gamma]) = f([x,y]) = T_f(x,y)$, which implies that $T_f$ is $\Ga$-invariant. Because $X$ has bounded geometry, bounded matrix coefficients with finite propagation define a bounded operator. Thus, $T_f \in \IC_u[X]^\Ga$.

To see that $\Phi$ preserves the algebra structure, let $f, g \in C_c(G(X, \Ga))$. Their convolution evaluated at a discrete point $[x,z]$ is given by
$$ (f * g)([x, z]) = \sum_{[y] \in X/\Ga} f([x,y]) g([y,z]). $$
Since the action is free, for fixed $x$ and $z$, summing over the orbits $[y]$ is equivalent to summing over all $y \in X$ exactly once. Hence,
$$ T_{f * g}(x, z) = \sum_{y \in X} f([x,y]) g([y,z]) = \sum_{y \in X} T_f(x, y) T_g(y, z) = (T_f T_g)(x,z). $$
The involution is preserved similarly. Furthermore, $\Phi$ is bijective because any operator in $\IC_u[X]^\Ga$ uniquely defines a bounded function on some $\Delta_R/\Ga$, which extends continuously and uniquely to $\overline{\Delta_R/\Ga}$ by the universal property of the Stone-\v{C}ech compactification. Thus, $\Phi$ is a $*$-isomorphism from $C_c(G(X, \Ga))$ onto $\IC_u[X]^\Ga$.

\emph{Step 2: Comparison of $C^*$-norms.} 
It suffices to show that the reduced groupoid norm of $f \in C_c(G(X, \Ga))$ equals the operator norm of $T_f$ on $\ell^2(X)$. The reduced groupoid norm is defined as
$$ \|f\|_r = \sup_{\omega \in \beta(X/\Ga)} \|\pi_\omega(f)\|, $$
where $\pi_\omega$ is the left regular representation of $G(X, \Ga)$ on $\ell^2(s^{-1}(\omega))$.

By \Cref{lem:lsc}, since $G(X, \Ga)$ is an \'etale groupoid, the norm map $\omega \mapsto \|\pi_\omega(f)\|$ is lower semi-continuous on the unit space $G^{(0)} = \beta(X/\Ga)$. The unit space contains $X/\Ga$ as a dense open subset. The supremum of a lower semi-continuous function over a dense subset coincides with the supremum over the entire space. Therefore, we have that
$$ \|f\|_r = \sup_{[x] \in X/\Ga} \|\pi_{[x]}(f)\|. $$

For a discrete point $[x] \in X/\Ga$, its source fiber $s^{-1}([x])=\{[y,x]\in (X \times X)/\Ga \mid y \in X \}$. Because the $\Ga$-action is free, the map $U: \ell^2(s^{-1}([x])) \to \ell^2(X)$ given by $U(\delta_{[y,x]}) = \delta_y$ is a unitary operator. By the definition of groupoid convolution,
$$ \pi_{[x]}(f) \delta_{[y,x]} = \sum_{[z, y] \in (X\times X)/\Ga} f([z, y]) \delta_{[z, y] \circ [y, x]} = \sum_{z \in X} f([z, y]) \delta_{[z, x]}. $$
Conjugating this representation by the unitary $U$, we have that
$$ (U \pi_{[x]}(f) U^*) \delta_y = U \Big( \sum_{z \in X} f([z, y]) \delta_{[z, x]} \Big) = \sum_{z \in X} T_f(z, y) \delta_z = T_f \delta_y. $$
This shows that for every $[x] \in X/\Ga$, the left regular representation $\pi_{[x]}$ is unitarily equivalent to the operator $T_f$ acting on $\ell^2(X)$. 

Consequently, taking the supremum over all $[x] \in X/\Ga$, we obtain
$$ \|f\|_r = \sup_{[x] \in X/\Ga} \|T_f\|_{\mathcal{B}(\ell^2(X))} = \|T_f\|_{\mathcal{B}(\ell^2(X))}. $$
Therefore, the map $\Phi$ extends to a $C^*$-isomorphism between $C^*_r(G(X, \Ga))$ and $C^*_u(X)^\Ga$.
\end{proof}

An analogous argument holds for the standard equivariant Roe algebra $C^*(X)^\Ga$, which is represented on $H_X \cong \ell^2(X) \ox\ell^2(\Ga)\ox H$ for an infinite-dimensional separable Hilbert space $H$. Rather than a scalar-valued groupoid $C^*$-algebra, one considers the groupoid crossed product $G(X, \Ga)\ltimes_r \ell^\infty(X, \mathcal{K})^\Ga$, where $\K=\K(\ell^2(\Ga)\ox H)$ equipped with the conjugate action induced by the right action of $\Ga$ on $\ell^2(\Ga)$. The proof proceeds mutatis mutandis by replacing scalar-valued kernels with $\mathcal{K}$-valued kernels.

\begin{Cor}\label{cor: equivariant Roe and groupoid}
Let $X$ be a metric space with bounded geometry, and $\Ga$ a countable discrete group acting freely, properly, and isometrically on $X$. There is a canonical $C^*$-isomorphism
$$G(X, \Ga)\ltimes_r \ell^\infty(X, \mathcal{K})^\Ga \cong C^*(X)^\Ga. \qed$$
\end{Cor}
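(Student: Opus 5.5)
The plan is to run the two-step argument of \Cref{thm: groupoid to Roe} again, with $\K$-valued kernels in place of scalar ones. First we fix the model. We take $\H_X=\ell^2(X)\ox\ell^2(\Ga)\ox H$ with $\Ga$ acting diagonally, by the right regular representation on $\ell^2(X)$ and by right translation on $\ell^2(\Ga)$. Because the action is free and proper, this is an equivariant ample module in the sense of \Cref{def:ample_module}. The coefficient algebra $A=\ell^\infty(X,\K)^\Ga$ consists of bounded functions $a:X\to\K$ with $a(x\gamma)=\rho_\gamma^{*}a(x)\rho_\gamma$, where $\rho$ denotes the right regular representation tensored with the identity on $H$. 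Its center contains $\ell^\infty(X/\Ga)=C(\beta(X/\Ga))$, which makes $A$ a $C(G^{(0)})$-algebra. The $G(X,\Ga)$-action on $A$ is then specified on the dense discrete part: the arrow $[x,y]$ carries the fiber at $[y]$ to the fiber at $[x]$ via the canonical identifications of both fibers with $\K$, using the representatives $x,y$. Invariance guarantees this does not depend on the choice of representative, and it extends to $\beta$ in the same way as the structure maps, using \Cref{lem:key lemma}.

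\emph{Step 1: the algebraic isomorphism.} Each $f\in C_c(G(X,\Ga),A)$ is supported in some clopen set $\overline{\Delta_R/\Ga}$. Arguing as in Step 1 of \Cref{thm: groupoid to Roe}, such an $f$ is determined by a uniformly bounded $\K$-valued kernel $T_f(x,y)\in\K$ on $\Delta_R$ satisfying $T_f(x\gamma,y\gamma)=\rho_\gamma^*T_f(x,y)\rho_\gamma$. This kernel defines an operator on $\H_X$ that has finite propagation, is $\Ga$-invariant, and is locally compact; local compactness holds because every entry is compact and $X$ is discrete with bounded geometry. Freeness lets us rewrite the convolution sum over $[y]\in X/\Ga$ as a sum over $y\in X$, exactly as in the scalar case, so $f\mapsto T_f$ is a $*$-homomorphism.

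For surjectivity onto $\IC[X]^\Ga$, take $T$ in that algebra. Its block matrix $T(x,y)=\chi_xT\chi_y$ has compact entries, is supported in some $\Delta_R$, and is uniformly bounded. A finite-propagation operator on a bounded-geometry space has $\sup_{x,y}\|T(x,y)\|\le\|T\|$, so $T$ comes from a kernel of the above kind. One subtlety remains: $\IC[X]^\Ga$ may contain kernels whose entries are not \emph{uniformly} approximable by finite rank, whereas elements of $A$ must be norm-bounded functions into $\K$. This is not a problem, since $\ell^\infty(X,\K)$ is defined as the bounded functions into $\K$, and $C_c$-sections over the Stone--\v{C}ech closure correspond exactly to bounded kernels on $\Delta_R/\Ga$ with values in the fiber algebra, the fiber being $\K$ at discrete points. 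Even without surjectivity onto $\IC[X]^\Ga$ itself, the image is dense in $C^*(X)^\Ga$, which suffices after taking closures.

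\emph{Step 2: norm comparison.} This is where I expect the main difficulty. The reduced crossed-product norm is the supremum over $\omega\in\beta(X/\Ga)$ of the norms of the regular representations on $\ell^2(s^{-1}(\omega))\ox A_\omega$. First I will prove the $A$-valued analogue of \Cref{lem:lsc}: the proof carries over verbatim, since it uses only the finite support of a test vector and the continuity of $f$. This gives lower semicontinuity, so the supremum can be taken over the dense set $X/\Ga$. At a point $[x]$ the fiber $A_{[x]}$ is $\K$, and the representation is unitarily equivalent, via $\delta_{[y,x]}\ox\xi\mapsto\delta_y\ox\xi$, to the operator $T_f$ acting on $\ell^2(X)\ox\ell^2(\Ga)\ox H$. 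It follows that $\|f\|_r=\|T_f\|$.

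The delicate point is making the fiber at non-discrete $\omega$ precise. I will handle this by realizing $A$ as $C_0$-sections and checking that the quotient norms are upper semicontinuous, which is automatic for $C(\beta(X/\Ga))$-algebras. Only the discrete fibers enter the final computation, so this suffices.
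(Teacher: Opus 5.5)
Your overall route is the paper's. The paper proves this corollary only by saying that the proof of \Cref{thm: groupoid to Roe} goes through mutatis mutandis with $\K$-valued kernels. Your Step 1 fills that in correctly: bounded $\K$-valued kernels on $\overline{\Delta_R/\Ga}$ correspond exactly to $\IC[X]^\Ga$, because $\ell^\infty(X,\K)$ consists of all norm-bounded functions.

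The problem is in Step 2, exactly at the point you flag. The proof of \Cref{lem:lsc} does \emph{not} carry over verbatim. Its key step is that $u\mapsto\|\eta_u\|$ is continuous near $u_0$, so that $\|\eta_u\|>\lambda$ persists on a neighbourhood. With coefficients, $\|\eta_u\|^2$ is the norm in the fiber $A_u$ of a continuous section of $A$. For a general $C(\beta(X/\Ga))$-algebra such norms are only \emph{upper} semicontinuous in $u$, and that is the wrong direction. It controls $\|\xi_u\|\le 1+\varepsilon$, but it cannot give $\|\eta_u\|>\lambda$. For non-continuous fields, $u\mapsto\|\pi_u(f)\|$ genuinely fails to be lower semicontinuous, already for the trivial groupoid, where it is just $u\mapsto\|f(u)\|$. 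Your remark that only the discrete fibers enter is also misleading: lower semicontinuity has to be checked at non-discrete points $u_0$, where the behaviour of the boundary fibers matters.

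The missing input is a property specific to this $A$. Identify $A\cong\ell^\infty(X/\Ga,\K)$ via a fundamental domain. One checks that $\|a_\omega\|=\lim_{[x]\to\omega}\|a([x])\|$, i.e.\ the fiber-norm function is the Stone--\v{C}ech extension of a bounded function on $X/\Ga$, hence continuous. So $A$ is a continuous field, and your lower semicontinuity argument then goes through.

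Alternatively, you can bypass semicontinuity entirely. For $a\in A$ one has $\|a\|=\sup_{[x]\in X/\Ga}\|a([x])\|$. So for any adjointable $S$ on the regular Hilbert $A$-module, $\|S\xi\|^2=\sup_{[x]}\|S_{[x]}\xi_{[x]}\|^2\le\sup_{[x]}\|S_{[x]}\|^2\|\xi\|^2$. This gives $\|f\|_r\le\sup_{[x]}\|\pi_{[x]}(f)\|=\|T_f\|$ directly. The same argument, with $\ell^\infty(X/\Ga)$ in place of $A$, also handles the scalar case.
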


As a final result of this section, we prove the Morita equivalence of the equivariant coarse groupoid under equivariant coarse equivalence. 

\begin{Thm}\label{thm: morita coarse}
Let $X$ and $Y$ be bounded geometry metric spaces equipped with free, proper, and isometric right $\Ga$-actions. If $X$ and $Y$ are equivariantly coarsely equivalent, then their equivariant coarse groupoids $G(X, \Ga)$ and $G(Y, \Ga)$ are Morita equivalent.
\end{Thm}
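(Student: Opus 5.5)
The plan is to mimic the Skandalis--Tu--Yu argument for coarse groupoids, by building an explicit equivalence bispace as a closure inside a Stone--\v{C}ech compactification. Let $f: X \to Y$ be a $\Ga$-equivariant coarse equivalence, so $d(f(x)\gamma, f(x\gamma)) = 0$. Consider the disjoint union $Z = X \sqcup Y$ with a $\Ga$-invariant metric extending both metrics: set $d(x,y) = \inf_{x'} \bigl(d(x,x') + 1 + d(f(x'),y)\bigr)$ for $x\in X$, $y\in Y$. With this metric $Z$ is a bounded geometry space with a free, proper, isometric $\Ga$-action. The inclusions of $X$ and $Y$ are equivariant coarse equivalences, and each is coarsely dense.

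The cleanest route is then to show that $G(X,\Ga)$ is the reduction of $G(Z,\Ga)$ to the clopen subset $\overline{X/\Ga} \subseteq \beta(Z/\Ga)$, and likewise for $Y$. Because $Z/\Ga = X/\Ga \sqcup Y/\Ga$, we have $\beta(Z/\Ga) = \beta(X/\Ga)\sqcup\beta(Y/\Ga)$. Moreover $(Z\times Z)/\Ga$ splits into four pieces, and the closure of $\Delta_R^Z/\Ga$ intersected with $r^{-1}(\beta(X/\Ga))\cap s^{-1}(\beta(X/\Ga))$ equals $\overline{\Delta_R^X/\Ga}$. So the reduction is literally $G(X,\Ga)$, with the same topology and operations by construction.

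Next I will invoke the standard fact that if $U\subseteq \G^{(0)}$ is an open subset meeting every orbit, then $\G^U_U$ is Morita equivalent to $\G$ via the bispace $\G_U = s^{-1}(U)$. Applying this to both $\beta(X/\Ga)$ and $\beta(Y/\Ga)$ and composing the two equivalences gives $G(X,\Ga)\sim G(Y,\Ga)$. Alternatively, one can take directly $W=r^{-1}(\beta(X/\Ga))\cap s^{-1}(\beta(Y/\Ga))$, the closure of $\bigcup_R (\Delta_R^Z\cap (X\times Y))/\Ga$. This $W$ carries a left $G(X,\Ga)$-action and a right $G(Y,\Ga)$-action by composition in $G(Z,\Ga)$. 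Freeness, properness and the quotient conditions of \Cref{def:Morita_equivalence} would be checked on the étale, clopen pieces $\overline{V_i}$ from \Cref{lem:partition}, using \Cref{lem:key lemma} to identify fibered products.

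The main obstacle is the "meets every orbit" condition, i.e. that $r: s^{-1}(\beta(Y/\Ga))\to\beta(Z/\Ga)$ is surjective onto the $X$-part, together with the corresponding quotient homeomorphisms. Here coarse density enters. By $\Ga$-equivariance there is $C$ such that the map $[x]\mapsto[x,f(x)]$ lands in $\Delta_C^Z/\Ga$, and this is a section of $r_0$ over $X/\Ga$. It extends to a continuous section $\beta(X/\Ga)\to\overline{\Delta_C^Z/\Ga}$ whose image has source in $\beta(Y/\Ga)$. This gives surjectivity, and hence openness of the anchor maps, since they are local homeomorphisms. The remaining properties (freeness, properness, and continuity of the inverse of the quotient map) follow from the general reduction lemma for étale groupoids with an open transversal, so I would rely on that rather than on direct verification.
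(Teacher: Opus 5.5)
Your route is valid in outline and differs from the paper's, but one step is wrong as written. The formula $d(x,y)=\inf_{x'}\bigl(d(x,x')+1+d(f(x'),y)\bigr)$ does not give a metric on $X\sqcup Y$ that extends $d_X$ and $d_Y$. Indeed $d(a,f(a))\le 1$ and $d(f(a),b)\le 1+d_Y(f(a),f(b))$, so the triangle inequality would force $d_X(a,b)\le d_Y(f(a),f(b))+2$. Symmetrically it would force $d_Y(f(a),f(b))\le d_X(a,b)+2$. This already fails for $\Ga=\mathbb{Z}$ acting on $X=\mathbb{Z}$ by $n\cdot\gamma=n+\gamma$ and on $Y=\mathbb{Z}$ by $n\cdot\gamma=n+2\gamma$, with $f(n)=2n$.

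The standard repair is the chain metric. Take the largest metric $d'$ on $X\sqcup Y$ with $d'\le d_X$ on $X$, $d'\le d_Y$ on $Y$, and $d'(x,f(x))\le 1$.
\begin{itemize}
\item It is $\Ga$-invariant because $f$ is equivariant, and it is uniformly discrete with bounded geometry.
\item Its restrictions to $X$ and $Y$ are coarsely equivalent to $d_X$ and $d_Y$, but in general not equal to them. For $X$ you use the lower control of $f$ to pull the $Y$-excursions of a short chain back into $X$; for $Y$ you use the upper control to push $X$-excursions into $Y$.
\end{itemize}
Consequently your corner identification must be weakened. The set $\overline{\Delta_R^{X\sqcup Y}/\Ga}\cap r^{-1}(\beta(X/\Ga))\cap s^{-1}(\beta(X/\Ga))$ is the closure of $(\Delta_R^{X\sqcup Y}\cap(X\times X))/\Ga$, not $\overline{\Delta^X_R/\Ga}$. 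The reduction equals $G(X,\Ga)$ only after taking the union over $R$, by mutual cofinality of the two families of invariant entourages; this works because the groupoid depends only on the coarse structure.

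With this change the argument goes through. Note also that your section $[x]\mapsto[x,f(x)]$ only shows that $\beta(Y/\Ga)$ is full. Fullness of $\beta(X/\Ga)$ needs coarse surjectivity of $f$: use any choice $[y]\mapsto[x_y,y]$ with $d_Y(f(x_y),y)\le C$, extended to $\beta(Y/\Ga)$ in the same way.

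After the repair, your direct bispace $W=r^{-1}(\beta(X/\Ga))\cap s^{-1}(\beta(Y/\Ga))$ is exactly the paper's bispace $\bigcup_R\overline{W_R/\Ga}$, where $W_R=\{(x,y)\mid d_Y(f(x),y)\le R\}$. This holds because the families $\{\Delta_R^{X\sqcup Y}\cap(X\times Y)\}_R$ and $\{W_R\}_R$ are mutually cofinal. The paper works with this space directly, with no auxiliary glued space, and checks the conditions of \Cref{def:Morita_equivalence} by hand. The actions are extended via \Cref{lem:key lemma}, freeness comes from freeness of the $\Ga$-actions, and the orbit spaces are identified by comparison on the dense discrete parts.

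Your linking-groupoid route, as in \cite{STY2002}, gets the following automatically from the reduction lemma for Hausdorff \'etale groupoids and full open subsets: freeness, properness, openness and surjectivity of the anchors, and the orbit-space homeomorphisms. Fullness is then the only genuinely coarse-geometric input, and these topological points are exactly the ones the paper treats quickly. The price is constructing the glued space with an invariant bounded geometry metric and identifying the corners, which is where your write-up needs the correction above.
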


\begin{proof}
Let $f: X \to Y$ be an equivariant coarse equivalence. For each integer $R \ge 0$, define the discrete space
$$ W_R = \{ (x, y) \in X \times Y \mid d_Y(f(x), y) \le R \}. $$
Because $f$ is $\Ga$-equivariant and the action on $Y$ is isometric, $W_R$ is invariant under the diagonal right $\Ga$-action on $X \times Y$. We pass to the quotient space $W_R/\Ga \subseteq (X \times Y)/\Ga$. We then define our bispace $Z$ as the closure in the Stone-\v{C}ech compactification:
$$ Z = \bigcup_{R \ge 0} \overline{W_R/\Ga} \subseteq \beta((X \times Y)/\Ga). $$

Define the left anchor map $r_Z: Z \to \beta(X/\Ga)$ and the right anchor map $s_Z: Z \to \beta(Y/\Ga)$ as the continuous extensions of the canonical projections $p_X([x,y]) = [x]$ and $p_Y([x,y]) = [y]$, respectively. The left $G(X, \Ga)$-action on $Z$ is defined on the discrete dense subsets by
$$ [x_1, x_2] \cdot [x_2, y] = [x_1, y]. $$
To see that $[x_1, y] \in Z$, note that if $[x_1, x_2] \in \Delta_{R'}^X/\Ga$ and $[x_2, y] \in W_R/\Ga$, then $d_X(x_1, x_2) \le K$ and $d_Y(f(x_2), y) \le R$. Since $f$ is a coarse equivalence, $d_Y(f(x_1), f(x_2)) \le S$ for some $S > 0$ which is determined by $R'$. The triangle inequality yields $d_Y(f(x_1), y) \le S + R$, meaning $[x_1, y] \in W_{S+R}/\Ga \subset Z$. Thus, the left action is well-defined and extends continuously to $Z$ by \Cref{lem:key lemma}.

Similarly, the right $G(Y, \Ga)$-action is defined by $[x, y_1] \cdot [y_1, y_2] = [x, y_2]$. If $d_Y(f(x), y_1) \le R$ and $d_Y(y_1, y_2) \le R'$, then $d_Y(f(x), y_2) \le R + R'$. Thus $[x, y_2] \in Z$. Moreover, it is direct to check that the left and right actions commute. Furthermore, both actions are proper from a similar argument as above. These two actions are free from the freeness of the $\Ga$-actions on $X$ and $Y$. 

To show $s_Z$ induces a homeomorphism from $Z/G(X, \Ga)$ to $\beta(Y/\Ga)$, we consider two elements $[x_1, y]$ and $[x_2, y]$ in $W_R/\Ga \subset Z$ sharing the same $Y$-coordinate. By definition, $d_Y(f(x_1), y) \le R$ and $d_Y(f(x_2), y) \le R$, which implies $d_Y(f(x_1), f(x_2)) \le 2R$. Because $f$ is coarsely proper, there exists an $M > 0$ such that $d_X(x_1, x_2) \le M$. This means the element $[x_1, x_2]$ belongs to $\Delta_M^X/\Ga \subset G(X, \Ga)$. Since $[x_1, x_2] \cdot [x_2, y] = [x_1, y]$, any two elements in $Z$ with the same $Y$-coordinate belong to the same left $G(X, \Ga)$-orbit. Thus, the orbit space is completely determined by the $Y$-coordinate. Because $f$ is coarsely surjective, the projection $p_Y(W_C) = Y$ for some large $C > 0$. Consequently, the map induced by $s_Z$ from the quotient $Z/G(X, \Ga)$ to $\beta(Y/\Ga)$ is a bijection on the dense discrete subsets, and hence extends to a homeomorphism on the compactifications by the universal property of the Stone-\v{C}ech compactification.

A similar argument can be used to show the homeomorphism  $Z/G(Y, \Ga) \cong \beta(X/\Ga)$. Therefore, $G(X, \Ga)$ and $G(Y, \Ga)$ are Morita equivalent.
\end{proof}

Recall from the preceding section that a $\Ga$-space $X$ has equivariant bounded geometry if and only if there exists a metric space $Y$ equipped with a free and proper isometric $\Ga$-action such that $X$ and $Y$ are equivariantly coarsely equivalent. For any $\Ga$-space $X$ with equivariant bounded geometry, even if its action is not free, we can formally define its equivariant coarse groupoid as the groupoid $G(Y, \Ga)$, where $Y$ is any free model that is equivariantly coarsely equivalent to $X$. \Cref{thm: morita coarse} guarantees that the resulting groupoid is completely independent of the choice of the free space $Y$ up to Morita equivalence.

\section{Localization algebras and $KK^{\G}$-theory}
\label{sec:localization_kk}

In this section, we introduce a localization algebra description of the groupoid equivariant $KK$-theory. To do this, we first recall the geometric and analytic framework of metric $\G$-spaces and their associated dual Roe algebras, as systematically developed by Benameur and Roy \cite{BenameurRoy2022}. We then bridge their Paschke-Higson duality with the localization algebra framework.

\subsection{Metric $\G$-spaces and groupoid equivariant Roe algebras with coefficients}

Throughout this section, we will always assume $\G$ to be a locally compact, Hausdorff, \'{e}tale groupoid with unit space $\G^{(0)}$.

\begin{Def}\label{def:proper G-space}
A \emph{metric $\G$-space} is a locally compact, Hausdorff, proper $\G$-space $Y$ with an anchor map $\rho: Y \to \G^{(0)}$, along with a continuous family of metrics $\{d_x\}_{x \in \G^{(0)}}$ on the fibers $Y_x = \rho^{-1}(x)$ with respect to the subspace topology on each $Y_x$ such that
\begin{itemize}
    \item[(1)] The metric is \emph{proper} in the sense that if a closed set $Z\subseteq Y$ is bounded in the sense
    $$\sup\{\diam_{x}(\rho^{-1}(x)\cap Z) \mid x\in \G^{(0)}\}<\infty,$$
    and $\rho(Z)\subseteq \G^{(0)}$ is compact, then $Z$ is compact.
    \item[(2)] The $\G$-action is \emph{fiberwise isometric}, i.e., for any $y_1, y_2 \in Y_{r(g)}$,
    $$d_{r(g)}(gy_1, gy_2) = d_{s(g)}(y_1, y_2).$$
\end{itemize}
\end{Def}

A canonical example of a metric $\G$-space is the groupoid $\G$ acting on itself by right translation. The anchor map is the range map $r: \G \to \G^{(0)}$, and the fibers are $\G_x = r^{-1}(x)$. A \emph{length function} (see \cite{OY2019, MW2020}) on $\G$ is a function $\ell:\G\to\IR_+$ such that
\begin{itemize}
    \item[(1)] $\ell(g)=0$ if and only if $g\in\G^{(0)}$;
    \item[(2)] $\ell(g)=\ell(g^{-1})$ for all $g\in\G$;
    \item[(3)] $\ell(g_1g_2)\leq \ell(g_1)+\ell(g_2)$ for any $g_1, g_2\in\G$ with $s(g_1)=r(g_2)$.
\end{itemize}
This length function $\ell$ is \emph{proper} if $r: \ell^{-1}([0,M])\to\G^{(0)}$ is proper for any $M\geq 0$. A proper length function always exists on a $\sigma$-compact groupoid $\G$. With a proper length function, we define the metric on $\G_x$ to be
$$d_x(g_1,g_2)=\ell(g_1^{-1}g_2)$$
for each $x\in\G^{(0)}$. It is direct to check that $\G$ is a proper, free, metric $\G$-space under this family of metrics $\{d_x\}_{x\in\G^{(0)}}$.

Let $A$ be a separable $C^*$-algebra. The algebra $A$ is called a \emph{$C_0(\G^{(0)})$-algebra} if there exists a non-degenerate $*$-homomorphism from $C_0(\G^{(0)})$ into the center of the multiplier algebra $\mathcal{M}(A)$. For each $x \in \G^{(0)}$, the fiber $A_x$ is defined as the quotient $C^*$-algebra
$$A_x=\frac{A} { C_0(\G^{(0)} \setminus \{x\})A}.$$
Thus $A$ can be viewed as a continuous field of $C^*$-algebras $(A_x)_{x\in \G^{(0)}}$. For a continuous map $f:X\to \G^{(0)}$, we define the \emph{pullback} of $A$ by $f$ via the spatial tensor product
$$f^*A=A\ox_{C_0(\G^{(0)})}C_0(X).$$
More precisely, it is the quotient of $A\ox C_0(X)$ modulo the relation $a\phi\ox \psi\sim a\ox f^*(\phi)\psi$, where $\phi\in C_0(\G^{(0)})$ and $\psi\in C_0(X)$. It is straightforward to verify that for any $x\in X$, the fiber algebra $(f^*A)_{x}$ is canonically isomorphic to $A_{f(x)}$.

A \emph{$\G$-$C^*$-algebra} is a $C_0(\G^{(0)})$-algebra $A$ equipped with a continuous $\G$-action. Specifically, the action is given by a globally continuous isomorphism $\alpha: s^*A \to r^*A$ of $C_0(\G)$-algebras. This induces a family of $*$-isomorphisms $\alpha_g: A_{s(g)} \to A_{r(g)}$ on the fibers such that the cocycle condition $\alpha_{gh} = \alpha_g \circ \alpha_h$ holds for any composable pair $(g, h) \in \G^{(2)}$.

To incorporate the coefficient algebra $A$, we use Hilbert modules over $A$. Let $\E$ be a Hilbert $A$-module. Because $A$ is a $C_0(\G^{(0)})$-algebra, $\E$ is naturally a Hilbert $C_0(\G^{(0)})$-module. Analogous to the algebra case, the fiber of $\E$ at $x \in \G^{(0)}$ is the quotient space
$$\E_x=\frac{\E}{C_0(\G^{(0)}\backslash\{x\})\cdot \E}.$$
This fiber $\E_x$ naturally carries the structure of a Hilbert $A_x$-module. Similarly, for a continuous map $f:X\to \G^{(0)}$, the \emph{pullback} $f^*\E = \E\ox_{C_0(\G^{(0)}),f}C_0(X)$ naturally becomes a Hilbert $f^*A$-module, and its fiber at $x \in X$ is canonically isomorphic to $\E_{f(x)}$.

A \emph{$\G$-action} on a Hilbert $A$-module $\E$ is an isomorphism of Hilbert $s^*A$-modules $V: s^*\mathcal{E} \xrightarrow{\sim} r^*\mathcal{E}$. This means
\begin{itemize}
    \item[(1)] $V$ restricts to a unitary isomorphism of Hilbert modules $V_g: \E_{s(g)}\to \E_{r(g)}$ that is compatible with the algebra isomorphism $\alpha_g: A_{s(g)} \to A_{r(g)}$, meaning $V_g(\xi \cdot a) = V_g(\xi) \cdot \alpha_g(a)$ for any $\xi \in \E_{s(g)}$ and $a \in A_{s(g)}$.
    \item[(2)] The family $\{V_g\}_{g\in\G}$ satisfies $V_{gh} = V_g V_h$ over composable pairs in $\G$.
\end{itemize}
We call such an $\E$ a \emph{Hilbert $\G$-$A$-module}. Let $\mathcal{L}_{A}(\mathcal{E})$ and $\mathcal{K}_{A}(\mathcal{E})$ denote the $C^*$-algebras of adjointable operators and generalized compact operators on $\mathcal{E}$, respectively.

\begin{Def}\label{def:admissible_rep}
Let $Y$ be a metric $\G$-space. An \emph{admissible representation} of $C_0(Y)$ on a Hilbert $\G$-$A$-module $\mathcal{E}$ is a $*$-homomorphism $\pi: C_0(Y) \to \mathcal{L}_{A}(\mathcal{E})$ satisfying:
\begin{itemize}
    \item[(1)] \emph{non-degenerate:} The linear span of $\pi(C_0(Y))\mathcal{E}$ is dense in $\mathcal{E}$.
    \item[(2)] \emph{$C_0(\G^{(0)})$-linear:} For any $f \in C_0(Y)$ and $\phi \in C_0(\G^{(0)})$, $\pi(f \cdot \rho^*(\phi)) = \pi(f)\phi$.
    \item[(3)] \emph{equivariant:} The representation strictly intertwines the $\G$-actions on $Y$ and $\mathcal{E}$. Fiberwise, this means $V_g \pi_{s(g)}(f) V_g^* = \pi_{r(g)}(g \cdot f)$.
    \item[(4)] \emph{ample:} For any non-zero function $f \in C_0(Y)$, the operator $\pi(f)$ is not a generalized compact operator, i.e., $\pi(f) \notin \mathcal{K}_{A}(\mathcal{E})$.
\end{itemize}
\end{Def}

We now show an example of the standard admissible module for proper $\G$-spaces with a \emph{full $\rho$-system}.

\begin{Def}\label{def:rho system}
Let $Y$ be a metric $\G$-space with anchor map $\rho: Y \to \G^{(0)}$. A \emph{full $\rho$-system} on $Y$ is a $\G$-equivariant continuous family of strictly positive Radon measures $\{\mu_x\}_{x \in \G^{(0)}}$ on the fibers $Y_x$, such that for any $f\in C_c(Y)$, the function
$$x\mapsto \int_{Y_x}f(y)d\mu_x(y)$$
is continuous on $\G^{(0)}$.
\end{Def}

A full $\rho$-system on $\G$ itself is called a \emph{Haar system}, which always exists if $\G$ is proper and \'{e}tale. Assume that the metric $\G$-space $Y$ admits a full $\rho$-system $\{\mu_x\}_{x \in \G^{(0)}}$. Let $C_c(\G)$ be endowed with the natural $C_0(\G^{(0)})$-module structure $(\xi \cdot \phi)(g) = \xi(g)\phi(r(g))$ and the $C_0(\G^{(0)})$-valued inner product:
$$ \langle \xi, \eta \rangle_{\ell^2(\G)}(x) = \sum_{g \in r^{-1}(x)} \overline{\xi(g)}\eta(g). $$
Its completion yields a standard Hilbert $C_0(\G^{(0)})$-module, denoted by $\ell^2(\G)$, whose fiber over $x$ is exactly $\ell^2(r^{-1}(x))$. The groupoid $\G$ acts on $\ell^2(\G)$ by isometries $V^\G_\gamma: \ell^2(r^{-1}(s(\gamma))) \to \ell^2(r^{-1}(r(\gamma)))$ given by $(V^\G_\gamma \xi)(g) = \xi(\gamma^{-1}g)$.

We define the module $L^2(Y, \mu)$ as the completion of $C_c(Y)$ with respect to the $C_0(\G^{(0)})$-valued inner product:
$$ \langle \xi, \eta \rangle_{L^2(Y)}(x) = \int_{Y_x} \overline{\xi(y)}\eta(y) d\mu_x(y). $$
The $C^*$-algebra $C_0(Y)$ naturally acts on $L^2(Y, \mu)$ by pointwise multiplication $M_f(\xi)(y) = f(y)\xi(y)$, and $\G$ acts isometrically by $(V^Y_\gamma \xi)(y) = \xi(\gamma^{-1}y)$.

Define the spatial internal tensor product of these Hilbert modules:
\begin{equation}\label{eq: admissible rep}
\mathcal{E}_Y = L^2(Y, \mu) \otimes_{C_0(\G^{(0)})} \ell^2(\G) \otimes_{C_0(\G^{(0)})} A \otimes H_0,
\end{equation}
where $H_0=\ell^2(\IN)$ is a fixed separable infinite-dimensional Hilbert space. Moreover,
\begin{itemize}
    \item the non-degenerate $C_0(Y)$-action is given by $\pi(f) = M_f \otimes 1 \otimes 1 \otimes 1$;
    \item the $\G$-action is the diagonal action $V_\gamma = V^Y_\gamma \otimes V^\G_\gamma \otimes \alpha_\gamma \otimes 1$.
\end{itemize}
The presence of ${H}_0$ guarantees that $\pi(f)$ is never a compact operator in $\mathcal{K}_A(\E_Y)$ for any $f \neq 0$. This $\E_Y$ serves as the canonical universal model for admissible Hilbert $\G$-$A$-modules over any metric $\G$-space $Y$ equipped with a full $\rho$-system.

\begin{Rem}\label{rem:rho_system_existence}
By a classic result of Blanchard (see \cite{Blanchard96, Williams2016}), a full $\rho$-system always exists when $Y$ is $\G$-proper and second countable. For equivariant coarse groupoids (as they involve the Stone-\v{C}ech compactification), the existence of a full $\rho$-system is not automatically guaranteed by abstract existence theorems. However, we shall only study the case when $Y$ is constructed out of Rips complexes, whose full $\rho$-system can be written explicitly, see \Cref{lem:K_homology_iso}. Therefore, throughout this paper, we shall safely assume the existence of a full $\rho$-system for the metric $\G$-spaces.
\end{Rem}

Fix an admissible representation $\E$ of $C_0(Y)$. For any subsets $A, B \subseteq Y$, we set 
$$d_Y(A, B) = \inf_{x \in \G^{(0)}} d_x(A \cap Y_x, B \cap Y_x).$$
Since $T \in \mathcal{L}_{A}(\E)$ is $C_0(\G^{(0)})$-linear, it preserves the submodule $C_0(\G^{(0)}\backslash\{x\}) \cdot \E$. Thus, it induces a bounded adjointable operator $T_x$ on the fiber Hilbert $A_x$-module $\E_x$ for any $x\in\G^{(0)}$.

\begin{Def}
Let $T \in \mathcal{L}_{A}(\mathcal{E})$ be an adjointable operator.
\begin{itemize}
    \item[(1)] $T$ has \emph{finite propagation} if there exists $R \ge 0$ such that for any $f, g \in C_0(Y)$ satisfying $d_Y(\supp(f), \supp(g)) > R$, we have $f T g = 0$.
    \item[(2)] $T$ is \emph{locally compact} if for all $f \in C_0(Y)$, $fT$ and $Tf$ belong to $\mathcal{K}_{A}(\mathcal{E})$.
    \item[(3)] $T$ is \emph{$\G$-invariant} if it commutes with the $\G$-action on $\mathcal{E}$, meaning $V_g T_{s(g)} = T_{r(g)} V_g$ for all $g \in \G$.
    \item[(4)] $T$ is \emph{pseudo-local} if $[T, f] \in \mathcal{K}_{A}(\E)$ for all $f \in C_0(Y)$. Equivalently, for any $f,g\in C_c(Y)$ with $d_Y(\supp(f), \supp(g)) > 0$, we have $fTg$ is compact.
\end{itemize}
\end{Def}

Following Benameur and Roy \cite{BenameurRoy2022}, we define the equivariant Roe algebra and dual algebra with coefficients in $A$:

\begin{Def}
Let $Y$ be a metric $\G$-space. 
\begin{itemize}
    \item[(1)] The \emph{$\G$-equivariant Roe algebra with coefficients in $A$}, denoted by $C^*(Y, A)^{\G}$, is the norm closure in $\mathcal{L}_{A}(\E)$ of the $*$-algebra, denoted by $\IC[Y,A]^{\G}$, of all $\G$-equivariant, locally compact operators with finite propagation.
    \item[(2)] The \emph{$\G$-equivariant dual algebra with coefficients in $A$}, denoted by $D^*(Y, A)^\G$, is the norm closure of the $*$-algebra, denoted by $\ID[Y,A]^{\G}$, of all $\G$-equivariant, pseudo-local operators with finite propagation.
\end{itemize}
\end{Def}

It is proved in \cite{BenameurRoy2022} that $C^*(Y, A)^\G$ forms a closed two-sided ideal in $D^*(Y, A)^\G$. We thus define the Paschke quotient algebra by $Q^*(Y, A)^\G = D^*(Y, A)^\G / C^*(Y, A)^\G$, yielding the Higson-Roe exact sequence:
$$ 0 \to C^*(Y, A)^\G \to D^*(Y, A)^\G \to Q^*(Y, A)^\G \to 0. $$
The $K$-theory boundary map is exactly $\partial: K_{*+1}(Q^*(Y, A)^\G) \to K_{*}(C^*(Y, A)^\G)$. 
The Paschke-Higson duality naturally extends to coefficients in $A$, providing an isomorphism
$$\mathcal{P}_*: K_{*+1}(Q^*(Y, A)^\G) \to KK_{*}^\G(C_0(Y), A).$$

\begin{Thm}\label{thm:groupoid Roe algebra}
Assume that $Y$ is a proper $\G$-space which is $\G$-compact. 
\begin{itemize}
    \item[(1)] The reduced crossed product $A \rtimes_r \G$ is Morita equivalent to $C^*(Y, A)^{\G}$.
    \item[(2)] The following diagram strictly commutes:
    $$
    \begin{tikzcd}
    K_{*+1}(Q^*(Y, A)^\G) \arrow[r, "\partial_*"] \arrow[d, "\mathcal{P}_*"'] & K_{*}(C^*(Y, A)^\G) \arrow[d, "\mathcal{M}_{*}"] \\
    KK_{*}^\G(C_0(Y), A) \arrow[r, "\mu^{\G}_A"] & K_{*}(A \rtimes_r \G)
    \end{tikzcd}
    $$
    where $\mathcal{M}_{*}$ is the Morita equivalence isomorphism, and $\mu^\G_A$ is the Baum--Connes assembly map with coefficients in $A$.
    \item[(3)] By passing to the inductive limit over $\G$-compact subsets $Y \subseteq \underline{E}\G$, the Baum--Connes assembly map for $\G$ with coefficients in $A$ is precisely modeled by the boundary map:
    $$ \mu^\G_A: K_*^{top}(\G, A)=\lim_{Y \subseteq \underline{E}\G} K_*(Q^*(Y, A)^\G) \to K_*(A \rtimes_r \G). $$
\end{itemize}
\end{Thm}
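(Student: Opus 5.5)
The plan is to reduce all three parts to the standard model of the Baum--Connes assembly map for groupoids, following Benameur--Roy \cite{BenameurRoy2022} and the classical argument for groups. The descent-free description of assembly for a $\G$-compact proper $Y$ uses a cut-off function. Choose $c\in C_c(Y)$, $c\ge 0$, with $\sum_{g\in\G^{\rho(y)}}c(g^{-1}y)^2=1$ for all $y$; such a function exists by $\G$-compactness and properness. This gives the Mishchenko projection $p_Y\in C_c(Y)\rtimes_r\G$ with kernel $p_Y(g,y)=c(y)c(g^{-1}y)$. The assembly map is then $\mu^\G_A([\E,F])=[p_Y]\otimes_{C_0(Y)\rtimes\G} j_\G([\E,F])$.

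For (1), I would work with the canonical admissible module $\E_Y$ of \eqref{eq: admissible rep}. First I show that every operator in $\IC[Y,A]^\G$ is a $\G$-invariant locally compact kernel. Finite propagation and $\G$-compactness of $Y$ imply the kernel is supported in a $\G$-compact subset of $Y\times_{\G^{(0)}}Y$. Such $\G$-invariant kernels can then be identified, via the cut-off $c$, with compactly supported sections of $\K(L^2(Y_x)\otimes H_0)\otimes A$ over $\G$. Concretely, I define $\Phi(T)(g)=\pi(c)\,T\,V_g\,\pi(c)$, read fiberwise. Conversely, a section $a$ is sent to $\sum_g V_g\pi(c)a(g)\pi(c)V_g^*$. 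This identifies $C^*(Y,A)^\G$ with a full corner of $(A\otimes\K(L^2(Y)\otimes H_0))\rtimes_r\G$, or equivalently with $\K(\mathcal F)$ for the Hilbert $A\rtimes_r\G$-module $\mathcal F$ obtained by completing $C_c(Y)\odot A\odot H_0$. The norm comparison (reduced norm versus operator norm on $\E_Y$) should be handled as in Step 2 of \Cref{thm: groupoid to Roe}. One uses the regular representations $\pi_u$ fiberwise, together with the fact that the $\ell^2(\G)$ factor in $\E_Y$ makes the fiberwise representations of $C^*(Y,A)^\G$ induced from the regular ones. Fullness of $\mathcal F$ comes from the non-degeneracy of the representation and from $c$ being nonzero on each orbit. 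Hence $\mathcal M_*$ is an isomorphism.

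For (2), I take a class in $K_{*+1}(Q^*)$ represented by $F\in\ID[Y,A]^\G$ with $F^2-1, F-F^*$ locally compact (the even case; the odd case by suspension). Its Paschke image is $[\E_Y,\pi,F]\in KK^\G_*(C_0(Y),A)$. I compute $\partial[F]$ by the usual formula, the index class of $F$ in $C^*(Y,A)^\G$ built from $1-F^*F$ and $1-FF^*$. Then I compare $\mathcal M_*(\partial[F])$ with the Kasparov product $[p_Y]\otimes j_\G[F]$. The descent $j_\G[F]$ is realised on $\E_Y\rtimes_r\G$, and compressing by $p_Y$ gives exactly the module $\mathcal F$ of (1), with operator $\pi(c)F\pi(c)$ summed over $\G$. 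This matches $\Phi$ applied to $F$ up to compact perturbation, since $F$ is pseudo-local and $[F,\pi(c)]$ is locally compact. So both compositions give the index of the same Fredholm operator on $\mathcal F$. The main obstacle is precisely this identification: checking that the descent–Mishchenko composition produces, up to homotopy and compact perturbation, the boundary element transported by $\mathcal M$. I would first try to verify it at the level of Kasparov cycles, by exhibiting an explicit unitary equivalence of Hilbert $A\rtimes_r\G$-modules $p_Y(\E_Y\rtimes_r\G)\cong\mathcal F$ intertwining the two operators modulo compacts.

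For (3), I use naturality in $Y$. For a $\G$-equivariant inclusion $Y\subseteq Y'$ of $\G$-compact proper spaces, both $\partial$ and $\mathcal P_*$ commute with the induced maps, and $\mathcal M_*$ is compatible because cut-off functions can be chosen compatibly (restricting $c'$). The map $K_*(C^*(Y,A)^\G)\to K_*(C^*(Y',A)^\G)$ then corresponds to the identity on $K_*(A\rtimes_r\G)$, since two cut-off projections are homotopic. Passing to the inductive limit over $\G$-compact $Y\subseteq\underline E\G$, and using that $\mathcal P_*$ is an isomorphism, the boundary maps assemble into $\mu^\G_A$ on $K^{top}_*(\G,A)$.
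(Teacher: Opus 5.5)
Your route is viable, and it differs from the paper's. The paper proves essentially nothing here. It refers to \cite[Theorem 2.14, Theorem 3.3]{BenameurRoy2022} for all three parts and sketches (1) only when $Y=\G$ and $\G^{(0)}$ is compact. In that case Fell absorption reduces the module to $\ell^2(\G)\otimes_{C_0(\G^{(0)})}A\otimes H_0$, and an equivariant, locally compact, finite-propagation operator becomes a kernel on $\G\times_{\G^{(0)}}\G$. Equivariance then collapses the kernel to $k(g)=T_{r(g)}(g,r(g))$, which gives an honest isomorphism $C^*(\G,A)^\G\cong(A\rtimes_r\G)\otimes\K(H_0)$.

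Your cut-off construction, $\Phi(T)=WTW^*$ for the isometry $W$ built from $c$, handles every $\G$-compact proper $Y$ at once, including non-free actions, which is what (3) needs. The price is that you get a corner rather than an isomorphism. In return it produces the Mishchenko projection $p_Y$, which is exactly the bridge to the descent formula $[p_Y]\otimes j_\G(\cdot)$ in (2); the paper's kernel picture does not supply this.

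Two remarks on the details:
\begin{itemize}
\item The norm comparison is automatic, because $W$ is an adjointable isometry into the module carrying the regular representation of $\K(\E_Y)\rtimes_r\G$. You therefore do not need Step 2 of \Cref{thm: groupoid to Roe}, whose lower-semicontinuity and dense-orbit argument is specific to $G(X,\Gamma)$.
\item In (2) you have named the key step but not carried it out: the isomorphism $p_Y(\E_Y\rtimes_r\G)\cong\mathcal F$ intertwining the two operators modulo compacts. That step is the real content of the cited result, the groupoid analogue of Roe's comparison of assembly maps, so (2) is for now a correct plan rather than a proof.
\end{itemize}

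One step is wrong as you justify it: fullness. Non-degeneracy of $\pi$, together with $c$ being nonzero on every orbit, does not make the corner full. Take $\G=\Gamma$ a nontrivial finite group, $Y$ a point, $A=\IC$, and the module $H_0$ with trivial action; this module is admissible in the sense of \Cref{def:admissible_rep}.
\begin{itemize}
\item Then $c=|\Gamma|^{-1/2}$ and $p_Y=|\Gamma|^{-1}\sum_g\lambda_g$ is the averaging projection, which is not full in $\IC\Gamma$.
\item Indeed $C^*(Y)^\Gamma=\K(H_0)$ is not Morita equivalent to $\IC\Gamma$.
\end{itemize}
What gives fullness is the $\ell^2(\G)$ factor in $\E_Y$. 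Fiberwise, $\delta_h$ pairs nontrivially with $V_g\delta_{h'}$ only for the single arrow $g=hh'^{-1}$. Hence inner products $g\mapsto\langle\xi,V_g\eta\rangle$ of vectors supported in bisections are themselves supported in bisections, and they span a dense subspace of $A\rtimes_r\G$.

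So $\mathcal F$ must be the completion of the compactly supported vectors of $\E_Y$, not of $C_c(Y)\odot A\odot H_0$ as you wrote. Likewise, the coefficient algebra of the crossed product should be $\K(\E_Y)$, not $\K(L^2(Y)\otimes H_0)\otimes A$. With that factor dropped, your argument breaks whenever the action has isotropy, which is precisely the situation of the $\G$-compact subsets of $\underline{E}\G$ in (3).
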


The reader is referred to \cite[Theorem 2.14, Theorem 3.3]{BenameurRoy2022} for the detailed proof of the above theorem. To provide some geometric intuition, we briefly sketch the proof of (1) for the case where $Y = \G$. In this scenario, we will show that $C^*(\G, A)^\G \cong (A \rtimes_r \G) \otimes \mathcal{K}({H}_0)$.

\begin{proof}[Sketch of proof for (1) when $Y=\G$ and $\G^{(0)}$ is compact]
Notice that $\G$ is $\G$-compact only if $\G^{(0)}$ is compact. Recall that the universal admissible module is given by $\mathcal{E}_A = \ell^2(\G) \otimes_{C_0(\G^{(0)})}\ell^2(\G) \otimes_{C_0(\G^{(0)})} A \otimes {H}_0$, which can be globally viewed as the completion of $C_c(\G, A \otimes {H}_0)$ with respect to its natural $A$-valued inner product. By \emph{Fell's absorption theorem} for groupoids, we can actually identify $\E_A$ with $\ell^2(\G) \otimes_{C_0(\G^{(0)})} A \otimes {H}_0$.

Let $T$ be a $\G$-equivariant, locally compact operator with finite propagation on $\mathcal{E}_A$. For any $x\in\G^{(0)}$, we shall denote $T_x$ the induced Hilbert $A_x$-module $\E_x$. It is direct to see that
$$\E_x=\ell^2(s^{-1}(x))\ox A_x\ox H_0.$$
Thus, $T_x$ can be written as a $\G_x$-by-$\G_x$ matrix such that each $T_x(g,h)\in \mathcal{L}_{A_x}(A_x \otimes {H}_0)$ for any $g,h\in r^{-1}(x)$. The local compactness of $T$ forces the values of this kernel to actually lie in the compact operators $A_x \otimes {\K}({H}_0)$. This family of kernels $\{T_x\}_{x\in\G^{(0)}}$ assembles to a global function $T: \G\times_{\G^{(0)}}\G\to \bigcup_{x\in \G^{(0)}}A_x \otimes {\K}({H}_0)$, where
$$\G\times_{\G^{(0)}}\G=\{(g,h)\in\G\times\G\mid r(g)=r(h)\}$$
is the fibered product.
Furthermore, the condition that $T$ has finite propagation bounded by some $R > 0$ shows that $T_x(g, h) = 0$ whenever $d_{x}(g, h) > R$. Since the metric is proper and given by a proper length function $\ell(g^{-1}h)$, this implies that $T$ vanishes outside a compact subset of $\G$. Hence, $T$ can be written as a kernel function
$$T\in C_c\Big(\G\times_{\G^{(0)}}\G, \bigsqcup_{x\in\G^{(0)}}A_x\ox\K(H_0)\Big).$$
The global continuity of the kernel $T$ is guaranteed by the fact that $T$ is an adjointable operator on the Hilbert module $\mathcal{E}_A$.

The $\G$-equivariance condition $V_\gamma T_{s(\gamma)} V_\gamma^* = T_{r(\gamma)}$ shows that
$$T_{r(\gamma)}(\gamma g, \gamma h) = \alpha_\gamma(T_{s(\gamma)}(g, h))$$
for any $g,h\in \G_{s(\gamma)}$. Thus, we define $k\in C_c(\G)\ox_{C_0(\G^{(0)})}A\ox\K(H_0)$ by
$$k(g)=T_{r(g)}(g,r(g))=\alpha_{h}(T_{s(h)}(h^{-1}g, h^{-1})), \text{ for any } h\in\G_{r(g)}.$$
For any $x\in \G^{(0)}$, the left-regular representation of $k$ on $\ell^2(\G_x)\ox A_x\ox H_0$ is given by
\begin{equation}\begin{split}
(k\xi)(g)&=\sum_{h\in\G_{x}} \alpha_{h}^{-1}(k(gh^{-1}))\xi(h)\\
&=\sum_{h\in\G_{x}} \alpha_{h}^{-1}T_{s(h^{-1})}(gh^{-1},s(h^{-1}))\xi(h)\\
&=\sum_{h\in\G_x} \alpha_{h}^{-1}\alpha_h(T_{x}(g, h))\cdot \xi(h)=(T_x\xi)(g).
\end{split}\end{equation}
Therefore, the $k$ action on $\E_x$ by convolution is equivalent to the canonical $T$ action on $\E_x$. This shows that the norm on $T$ in $\L_A(\E)$ is equal to the reduced norm of $f$. As a result, the map $T\mapsto k$ defines an isomorphism from $\IC[\G,A]^\G$ to $C_c(\G)\ox_{C_0(\G^{(0)})}A\ox\K(H_0)$, and extends to a $C^*$-isomorphism from $C^*(\G,A)^\G$ to $(A\rtimes_r\G)\ox\K(H_0)$.
\end{proof}

\subsection{Localization algebra approach to $KK^\G$-theory}

Inspired by Yu's localization algebra, we provide a localization algebra description of $KK^\G(Y,A)$ in this section. 
For technical convenience, we assume throughout this section that the admissible Hilbert $\G$-$A$-modules are \emph{very ample}, in the sense that they canonically satisfy $\mathcal{E} \cong \mathcal{E}^\infty = \bigoplus_{i=1}^{\infty} \mathcal{E}$.

\begin{Def}
The \emph{localization algebra} $C^*_{L}(Y, A)^\G$ is defined as the $C^*$-algebra of all bounded, uniformly continuous functions $f: [0, \infty) \to C^*(Y, A)^\G$ such that the propagation of $f(t)$ tends to zero as $t \to \infty$. 
\end{Def}

We define the evaluation map $ev: C^*_{L}(Y, A)^\G \to C^*(Y, A)^\G$ by $e(f) = f(0)$. The induced map on $K$-theory, $ev_*: K_*(C^*_{L}(Y, A)^\G) \to K_*(C^*(Y, A)^\G)$, serves as the index map in the localization setting. We now prove that this is equivalent to the Paschke dual picture.

\begin{Thm}\label{thm:localization algebra}
There is a canonical isomorphism $K_*(C^*_{L}(Y, A)^\G) \cong K_{*+1}(Q^*(Y, A)^\G)$. Under this isomorphism, the evaluation map $ev_*$ coincides with the boundary map $\partial_*$ in the Paschke dual picture. Consequently, $K_*(C^*_{L}(Y, A)^\G) \cong KK_{*}^\G(Y, A)$.
\end{Thm}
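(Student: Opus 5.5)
The plan follows Qiao--Roe's proof that the localization algebra computes $K$-homology, transplanted to the groupoid-equivariant setting. Introduce the \emph{localized dual algebra} $D^*_L(Y,A)^\G$: the $C^*$-algebra of bounded, uniformly continuous maps $f:[0,\infty)\to D^*(Y,A)^\G$ with $\prop(f(t))\to 0$. It contains $C^*_L(Y,A)^\G$ as an ideal, and we set $Q^*_L(Y,A)^\G$ for the quotient. We then compare two short exact sequences, joined by evaluation at $0$:
\[
0\to C^*_L(Y,A)^\G\to D^*_L(Y,A)^\G\to Q^*_L(Y,A)^\G\to 0
\]
and the Higson--Roe sequence $0\to C^*(Y,A)^\G\to D^*(Y,A)^\G\to Q^*(Y,A)^\G\to 0$. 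The theorem follows from two claims:
\begin{itemize}
\item[(a)] $K_*(D^*_L(Y,A)^\G)=0$;
\item[(b)] evaluation at $0$ induces an isomorphism $K_*(Q^*_L(Y,A)^\G)\cong K_*(Q^*(Y,A)^\G)$.
\end{itemize}
Given these, the six-term sequence of the first extension shows that $\partial_L:K_{*+1}(Q^*_L)\to K_*(C^*_L)$ is an isomorphism. The desired isomorphism is $\partial_L\circ(ev_*^Q)^{-1}$. Naturality of boundary maps under the morphism of extensions gives $ev_*\circ\partial_L=\partial\circ ev^Q_*$, so $ev_*$ corresponds to $\partial_*$. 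Composing with the Paschke--Higson duality $\mathcal P_*$ recalled before Theorem~\ref{thm:groupoid Roe algebra} then yields $K_*(C^*_L(Y,A)^\G)\cong KK^\G_*(Y,A)$.

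For (a), use an Eilenberg swindle, made possible by the very ampleness assumption $\E\cong\E^\infty$. Given $f\in D^*_L$, form
\[
F(t)=\bigoplus_{n\ge0} f(t+n)
\]
acting on $\E^\infty\cong\E$. This is $\G$-equivariant because the identification $\E^\infty\cong\E$ is equivariant and $C_0(\G^{(0)})$-linear. Each summand is pseudo-local, so $F(t)$ is pseudo-local as well: a commutator $[F(t),g]$ is a norm limit of finite sums of compact operators, since $\|[f(t+n),g]\|\to0$ as $n\to\infty$, because propagation tends to zero and $g$ is uniformly continuous. Also $\prop F(t)\le\sup_{n}\prop f(t+n)\to0$. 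The standard shift homotopy then shows $[f]+[F]=[F]$ in $K$-theory. One must check that the path $s\mapsto f(t+s)$ gives a uniformly continuous family; this is routine from uniform continuity of $f$.

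For (b), first show that $\ker(ev^Q)$ has trivial $K$-theory. This kernel consists of functions vanishing at $0$ modulo $C^*_L$. Contract it by reparametrization, $f_s(t)=f(t+s)$, together with a cone-type argument. Surjectivity is also needed: a class in $Q^*$ is represented by a finite-propagation pseudo-local $T$, and it must be lifted to a path whose propagation tends to zero. Here we use the \'etale structure and a $\G$-equivariant partition of unity on $Y$ subordinate to covers of mesh $1/n$; such partitions exist because $Y$ is $\G$-proper, via a cutoff function. Set
\[
T_n=\sum_i\phi_i^{1/2}\,T\,\phi_i^{1/2}.
\]
Pseudo-locality gives $T_n-T\in C^*(Y,A)^\G$, and interpolating linearly between consecutive $T_n$ yields the required path.

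The main obstacle I expect is precisely this localization step in (b). It requires constructing uniformly locally finite, $\G$-invariant, arbitrarily fine partitions of unity on a metric $\G$-space, and controlling the compact errors uniformly across fibers so that the interpolated path is norm-continuous into $D^*(Y,A)^\G$ with all differences in $C^*$. My first attempt would handle the $\G$-compact case using a cutoff function $c$ with $\sum_{g}c(g^{-1}y)=1$, and pass to general $Y$ by a direct limit.
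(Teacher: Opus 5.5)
Your overall architecture matches the paper: you compare $0\to C^*_L\to D^*_L\to Q^*_L\to0$ with the Higson--Roe extension via evaluation at $0$, kill $K_*(D^*_L(Y,A)^\G)$ by the forward swindle $\bigoplus_{n\ge0}f(t+n)$, and reduce (b) to $K$-triviality of $\ker(ev^Q)$. Part (a) is essentially Lemma~\ref{lem:vanishing_DL}. The gap is the step in (b) that you treat as routine, namely $K_*(\ker ev^Q)=0$.

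\begin{itemize}
\item The reparametrization $f\mapsto f(\cdot+s)$ is a homotopy of endomorphisms of $D^*_L$, but it does not preserve $\ker(ev^Q)$. The new value at $0$ is $f(s)$, which need not lie in $C^*(Y,A)^\G$.
\item A cone-type contraction is not available. Paths live on the non-compact interval $[0,\infty)$ with only uniform continuity, so reparametrizing towards $t=0$ (e.g.\ $f\mapsto f(\lambda t)$, $\lambda\to0$) is not norm-continuous.
\item You cannot treat ideal and algebra separately. We have $\ker(ev^Q)\cong D^*_{L,0}/C^*_{L,0}$ (paths vanishing at $0$). By (a) and surjectivity of $ev\colon D^*_L\to D^*$, $K_*(D^*_{L,0})\cong K_{*+1}(D^*(Y,A)^\G)$, which need not vanish.
\item The swindle adapted to paths vanishing at $0$ is the backward one, $f\mapsto\bigoplus_{n\ge1}f(t-n)$ with $f:=0$ on $(-\infty,0]$. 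But it leaves $D^*_L$: at time $t$ it contains a summand $f(s)$ with $s\in(0,1]$, so its propagation does not tend to $0$.
\end{itemize}

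The missing idea is to use your truncation maps for much more than lifting classes of $Q^*$. Take $\Phi_n(S)=\sum_g\sum_i V_g^*h_iSh_iV_g$, built from the cutoff function and a partition of unity of mesh $1/n$, and interpolate in $t$. Then $\Phi(f)\in D^*_L$ and $\Phi(f)-f\in\mathfrak{T}C^*(Y,A)^\G$ for every bounded uniformly continuous path $f$ in $D^*(Y,A)^\G$, with no propagation hypothesis on $f$. Hence the inclusion induces an isomorphism $D^*_L/C^*_L\cong\mathfrak{T}D^*/\mathfrak{T}C^*$ (Lemma~\ref{lem:localized_dual_roe}). In this picture the kernel of evaluation becomes $\mathfrak{T}_0D^*/\mathfrak{T}_0C^*$, and propagation is irrelevant. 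The backward swindle is now legitimate: for each $t$ only finitely many summands are nonzero, so pseudo-locality and local compactness are automatic. This gives $K_*(\mathfrak{T}_0D^*)=K_*(\mathfrak{T}_0C^*)=0$, hence $K$-triviality of the quotient, and surjectivity comes for free from constant paths $\kappa$ (Lemma~\ref{lem:evaluation_iso}). So the real difficulty is not the existence of fine equivariant partitions of unity, which the paper gets directly from the cutoff function. It is that truncation lets you discard the propagation condition modulo $C^*$, which is exactly what makes the second swindle available.
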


We shall prove \Cref{thm:localization algebra} by adapting the Eilenberg swindle argument developed by Y. Qiao and J. Roe \cite{QR2010} and the conceptual framework of M. Dadarlat, R. Willett, and J. Wu \cite{DWW2018} to the groupoid setting.

For any $C^*$-algebra $B$, let $\mathfrak{T}B$ denote the $C^*$-algebra of bounded, uniformly continuous functions $f: [0,\infty) \to B$. Evaluation at zero provides a surjective *-homomorphism $ev_0: \mathfrak{T}B \to B$. There is a canonical injective *-homomorphism $\kappa: B \to \mathfrak{T}B$ mapping an element to the constant function, which serves as a right inverse to $ev_0$. We denote by $\mathfrak{T}_0 B$ the closed ideal of functions $f \in \mathfrak{T}B$ satisfying $f(0) = 0$.

We define the \emph{localized equivariant dual Roe algebra}, denoted by $D_L^*(Y,A)^{\mathcal{G}}$, as the $C^*$-subalgebra of $\mathfrak{T}D^*(Y,A)^{\mathcal{G}}$ generated by uniformly continuous bounded functions $f: [0,\infty) \rightarrow D^*(Y,A)^{\mathcal{G}}$ such that each $f(t)$ has finite propagation and $\mathrm{prop}(f(t)) \to 0$ as $t \to \infty$. It is clear from the definitions that the localization algebra $C_L^*(Y,A)^{\mathcal{G}}$ forms a closed two-sided ideal in $D_L^*(Y,A)^{\mathcal{G}}$.

\begin{Lem}\label{lem:localized_dual_roe}
The natural inclusion maps induce a canonical $C^*$-algebra isomorphism on the quotients
$$D_L^*(Y,A)^{\mathcal{G}} / C_L^*(Y,A)^{\mathcal{G}} \to \mathfrak{T}D^*(Y,A)^{\mathcal{G}} / \mathfrak{T}C^*(Y,A)^{\mathcal{G}}. $$
\end{Lem}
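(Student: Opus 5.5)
The map in question is induced by the inclusion $D_L^*(Y,A)^{\G}\subseteq \mathfrak{T}D^*(Y,A)^{\G}$. Since $C_L^*(Y,A)^\G = D_L^*(Y,A)^\G\cap \mathfrak{T}C^*(Y,A)^\G$, we obtain a well-defined $*$-homomorphism
$$\Phi: D_L^*(Y,A)^\G/C_L^*(Y,A)^\G\to \mathfrak{T}D^*(Y,A)^\G/\mathfrak{T}C^*(Y,A)^\G.$$
I would prove that $\Phi$ is injective and surjective separately, following the argument of Qiao--Roe \cite{QR2010} for the group case. The groupoid structure only enters through $C_0(\G^{(0)})$-linearity and $\G$-equivariance, and every construction below is built from functions on $Y$ that are pulled back compatibly with $\rho$.

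\emph{Injectivity.} The inclusion identifies the left side with $D_L^*/(D_L^*\cap \mathfrak{T}C^*)$. So it suffices to check that $D_L^*(Y,A)^\G\cap\mathfrak{T}C^*(Y,A)^\G\subseteq C_L^*(Y,A)^\G$. Let $f$ lie in the intersection. Then each $f(t)$ is locally compact, $f(t)$ is a norm limit of finite propagation elements of $D^*$, and $\mathrm{prop}$ of approximants tends to $0$.

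I would use the cutoff trick from the next paragraph to replace approximants $g\in\ID_L$ of $f$ by elements $\sum_i \phi_i g(t)\psi_i$ with controlled propagation, where $\{\phi_i\}$ is a partition of unity and each $\psi_i$ is compactly supported. Local compactness of $f(t)$ then shows these can be chosen in $\IC[Y,A]^\G$, up to small error and uniformly in $t$.

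\emph{Surjectivity.} This is the main step. Given $F\in\mathfrak{T}D^*(Y,A)^\G$, I want $G\in D_L^*(Y,A)^\G$ with $F-G\in\mathfrak{T}C^*(Y,A)^\G$. By density, assume each $F(t)$ has finite propagation, uniformly bounded in $t$. Choose a sequence $\varepsilon_n\to0$ and, for each $n$, a $\G$-invariant uniformly locally finite partition of unity $\{\phi_{n,i}\}_i$ on $Y$ subordinate to a cover by sets of fiberwise diameter less than $\varepsilon_n$. Such partitions exist by properness of the $\G$-action, averaging a cutoff function against the Haar system or full $\rho$-system. Set
$$F_n(t)=\sum_i \phi_{n,i}^{1/2}F(t)\phi_{n,i}^{1/2},$$
a strictly convergent sum. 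Then $\mathrm{prop}(F_n(t))\le\varepsilon_n$. By pseudo-locality, $F(t)-F_n(t)=\sum_i \phi_{n,i}^{1/2}[\phi_{n,i}^{1/2},F(t)]$. This is a locally finite sum of compact terms with finite propagation, so it lies in $C^*(Y,A)^\G$. Equivariance and $C_0(\G^{(0)})$-linearity are preserved because the $\phi_{n,i}$ are $\G$-invariant as a family and $\G^{(0)}$-fibered.

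Finally, set $G(t)$ to be the linear interpolation between $F_n(t)$ and $F_{n+1}(t)$ for $t\in[n,n+1]$. Then $G\in D_L^*$, and $F-G$ takes values in $C^*(Y,A)^\G$.

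\emph{Main obstacle.} The delicate point is uniform continuity in $t$ of $F-G$ and of $G$. Each $F_n$ is uniformly continuous with the same modulus as $F$, since $T\mapsto \sum_i\phi_i^{1/2}T\phi_i^{1/2}$ is a contraction. The interpolation adds only Lipschitz dependence on $t$. The harder requirement is that $F(t)-F_n(t)$ be a norm-limit of locally compact finite propagation operators uniformly in $t$, which is needed for membership in $\mathfrak{T}C^*$. I would handle this by approximating $F$ by a norm-continuous piecewise-linear path through finitely many operators on each interval $[n,n+1]$, so that compactness of the commutators is only needed for finitely many operators at a time.
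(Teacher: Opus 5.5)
Your surjectivity argument is essentially the paper's proof. The paper also builds equivariant truncations $\Phi_n(T)_x=\sum_{g\in s^{-1}(x)}\sum_i V_g^*h_iT_{r(g)}h_iV_g$ from a cutoff function and a partition of unity of fiberwise diameter $<1/n$; this is the concrete form of your $\G$-invariant family $\{\phi_{n,i}\}$. It then uses pseudo-locality to get $\Phi_n(T)-T\in C^*(Y,A)^{\G}$, and interpolates linearly between $\Phi_n$ and $\Phi_{n+1}$.

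One step of yours is false but dispensable: the reduction ``by density, each $F(t)$ has finite propagation, uniformly bounded in $t$.'' It fails in general, because the propagation needed to approximate $F(t)$ can grow without bound along a uniformly continuous path. Already for $\G=\Ga$ acting on $Y=\Ga$, a piecewise-linear path through $\lambda_{\gamma_k}\otimes P$, with $P$ a rank-one projection and $|\gamma_k|>k$, stays at distance at least $1$ from every path of uniformly bounded propagation. You never actually use the bound. The compression $\sum_i\phi_{n,i}^{1/2}T\phi_{n,i}^{1/2}$ has propagation at most $\varepsilon_n$ whatever $T$ is. Moreover, $F(t)-F_n(t)\in C^*(Y,A)^{\G}$ holds for each fixed $t$ by approximating $F(t)$ by elements of $\ID[Y,A]^{\G}$, because $T\mapsto T-\sum_i\phi_{n,i}^{1/2}T\phi_{n,i}^{1/2}$ is norm-continuous. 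For the same reason your ``main obstacle'' is not one. $\mathfrak{T}C^*(Y,A)^{\G}$ is simply the algebra of bounded uniformly continuous $C^*(Y,A)^{\G}$-valued functions, so pointwise membership plus the uniform continuity you already checked is enough.

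On injectivity, the paper only says that the truncation gives a right inverse on the quotients. It never verifies $D^*_L(Y,A)^{\G}\cap\mathfrak{T}C^*(Y,A)^{\G}\subseteq C^*_L(Y,A)^{\G}$, so your extra paragraph addresses a real omission. As written, however, with the non-symmetric $\sum_i\phi_i g(t)\psi_i$, it is too vague to check. The clean argument reuses your map $\Psi\colon F\mapsto G$ from the surjectivity step, which has three properties:
\begin{enumerate}
\item $\Psi$ is contractive.
\item $\Psi$ sends $\mathfrak{T}C^*(Y,A)^{\G}$ into $C^*_L(Y,A)^{\G}$, since compression preserves local compactness and forces propagation at most $\varepsilon_n$.
\item $f-\Psi(f)\in C^*_L(Y,A)^{\G}$ for every $f\in D^*_L(Y,A)^{\G}$. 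On the dense subalgebra, $f(t)-\Psi(f)(t)$ is locally compact with propagation at most $\max(\mathrm{prop}\,f(t),\varepsilon_{\lfloor t\rfloor})\to 0$, and one then passes to limits.
\end{enumerate}
Hence $f=\Psi(f)+(f-\Psi(f))\in C^*_L(Y,A)^{\G}$ for every $f$ in the intersection. Together with your surjectivity argument, this proves the lemma.
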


\begin{proof}
First, we shall construct an explicit sequence of continuous, equivariant truncation operators $\Phi_n: \mathcal{L}_A(\mathcal{E}_A) \to \mathcal{L}_A(\mathcal{E}_A)$ to bound propagation.

Since $Y$ is a proper metric $\mathcal{G}$-space, it admits a continuous cutoff function $c: Y \to [0,1]$ i.e., $c$ satisfies that $\rho|_{\supp(c)}: \mathrm{supp}(c)\to \mathcal{G}^{(0)}$ is proper and for any $x \in \mathcal{G}^{(0)}$ and $y \in Y_x$, we have:
\[ \sum_{g \in s^{-1}(x)} c(y \cdot g^{-1}) = 1. \]
Since $\mathcal{G}$ is an \'{e}tale groupoid and $Y$ is a proper $\G$-space, the sum is a well-defined finite sum.

For a fixed integer $n \ge 1$, we can choose a uniformly locally finite open cover $\{W_i\}_{i \in I}$ of $\mathrm{supp}(c)$ such that the fiberwise diameter of each $W_i$ satisfies $\mathrm{diam}_x(W_i \cap Y_x) < 1/n$ for all $x \in \mathcal{G}^{(0)}$. Let $\{\eta_i\}_{i \in I}$ be a continuous partition of unity subordinate to this cover on $\mathrm{supp}(c)$, so $\sum_{i \in I} \eta_i(y) = 1$ for all $y \in \mathrm{supp}(c)$. We define the functions $h_i(y) = \sqrt{c(y)\eta_i(y)}$. By construction, $\mathrm{supp}(h_i) \subseteq W_i$, and $\sum_{i \in I} h_i(y)^2 = c(y)$. 

Following \Cref{def:admissible_rep}, we identify $h_i \in C_0(Y)$ with its representation on $\mathcal{E}_A$. For any bounded adjointable operator $S \in \mathcal{L}_A(\mathcal{E}_A)$, we view $S$ as a family of operators $\{S_x\}_{x \in \mathcal{G}^{(0)}}$ acting on the fibers $\mathcal{E}_x$. We define the completely positive map $\Phi_n(S)$ fiberwisely to be
\[ \Phi_n(S)_x = \sum_{g \in s^{-1}(x)} \sum_{i \in I} V_g^* \cdot (h_i|_{r(g)}) \cdot S_{r(g)} \cdot (h_i|_{r(g)}) \cdot V_g, \]
where $V_g: \mathcal{E}_{s(g)} \to \mathcal{E}_{r(g)}$ is given by the groupoid action. Because $\rho$ is proper on the support of $h_i$, for any $x\in \G^{(0)}$, only finitely many terms in this double sum are non-zero. Thus, the sum converges strongly and defines a bounded adjointable operator in $\L_A(\E_A)$. The continuity of this operator is guaranteed since $V$ is continuous in $g\in \G$ and $S$ is continuous in $x\in\G^{(0)}$.

If $T$ is $\mathcal{G}$-equivariant, we have $T_{r(g)} = V_g T_{s(g)} V_g^*$. We then have that
\[ \Phi_n(T)_x = \sum_{g \in s^{-1}(x)} \sum_{i \in I} (g^{-1} \cdot h_i|_{r(g)}) \, T_x \, (g^{-1} \cdot h_i|_{r(g)}). \]
where $g^{-1}\cdot h_i|_{r(g)}\in C_0(Y_x)$ is the pushout function of $h_i\in C_0(Y_{r(g)})$ by $g$. Thus, $\Phi_n(T)$ is $\mathcal{G}$-equivariant. Moreover, the right $\mathcal{G}$-action acts by isometries on the fibers, the fiberwise diameter of the shifted support $\supp(g^{-1} \cdot h_i|_{r(g)}) = \supp(h_i|_{r(g)}) \cdot g$ is bounded by $1/n$ by definition. Therefore, the propagation of $\Phi_n(T)$ is bounded by $1/n$.

If $T \in D^*(Y,A)^{\mathcal{G}}$ is a pseudo-local operator, the error term is given by
\[ \Phi_n(T)_x - T_x = \sum_{g \in s^{-1}(x)} \sum_{i \in I} V_g^* \, h_i \, [h_i, T_{r(g)}] \, V_g. \]
Since $T$ is pseudo-local, the commutator $[h_i, T_{r(g)}]$ is a compact operator in $\mathcal{K}_A(\mathcal{E}_{r(g)})$. Because the groupoid action is proper, there are only finite non-zero terms in the sum. We conclude that $\Phi_n(T) - T \in C^*(Y,A)^{\mathcal{G}}$.

For a function $f \in \mathfrak{T}D^*(Y,A)^{\mathcal{G}}$ and a parameter $t \in [0,\infty)$, we write $t = n + s$ where $n \in \mathbb{N} \cup \{0\}$ and $s \in [0,1)$, and define the continuous interpolation:
\[ \Phi(f)(t) = (1-s)\Phi_n(f(t)) + s\Phi_{n+1}(f(t)). \]
One easily verifies that $\mathrm{prop}(\Phi(f)(t)) \to 0$ as $t \to \infty$, so $\Phi(f) \in D_L^*(Y,A)^{\mathcal{G}}$. Furthermore, $\Phi(f)(t) - f(t) \in C^*(Y,A)^{\mathcal{G}}$ uniformly for all $t \ge 0$. This continuous map $\Phi$ provides a well-defined right inverse on the quotients.
\end{proof}

The isomorphism in Lemma \ref{lem:localized_dual_roe} gives rise to a short exact sequence of $C^*$-algebras:
\[ 0 \to C_L^*(Y,A)^{\mathcal{G}} \to D_L^*(Y,A)^{\mathcal{G}} \to \mathfrak{T}D^*(Y,A)^{\mathcal{G}} / \mathfrak{T}C^*(Y,A)^{\mathcal{G}} \to 0. \]
Applying the $K$-theory functor, we obtain a boundary map:
\[ \partial_L: K_{*+1}\left(\mathfrak{T}D^*(Y,A)^{\mathcal{G}} / \mathfrak{T}C^*(Y,A)^{\mathcal{G}}\right) \to K_{*}(C_L^*(Y,A)^{\mathcal{G}}). \]

\begin{Lem}\label{lem:vanishing_DL}
The $K$-theory group $K_{*}(D_L^*(Y,A)^{\mathcal{G}})$ is trivial. Consequently, the $K$-theory boundary map $\partial_L$ defined above is an isomorphism.
\end{Lem}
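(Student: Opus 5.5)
The vanishing of $K_*(D_L^*(Y,A)^{\mathcal{G}})$ will come from an Eilenberg swindle in the time variable, following Qiao--Roe \cite{QR2010}. The only geometric input needed is the very ampleness assumption $\mathcal{E}\cong\mathcal{E}^\infty$. Once the vanishing is established, the six-term exact sequence of the extension displayed just before the lemma shows immediately that $\partial_L$ is an isomorphism.

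\emph{Construction of the swindle.} Fix an equivariant unitary $W\colon \mathcal{E}^\infty\to\mathcal{E}$. We take $W$ of the form $\bigoplus_k$ of a fixed fiberwise identification, so that it is $C_0(Y)$-linear and $\mathcal{G}$-equivariant. Conjugation by $W$ then preserves propagation, local compactness and pseudo-locality. For $f\in D_L^*(Y,A)^{\mathcal{G}}$ and $k\ge 0$, let $f_k(t)=f(t+k)$ be the time shift. It is again uniformly continuous and bounded, and its propagation still tends to $0$, so $f_k\in D_L^*$. Define
\[ \sigma(f)(t)=W\Big(\bigoplus_{k\ge 0} f(t+k)\Big)W^*. \]
The estimate $\prop(\sigma(f)(t))\le \sup_{k}\prop(f(t+k))$ tends to $0$. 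Uniform continuity is uniform in $k$, and each summand is pseudo-local. The direct sum of pseudo-local operators is pseudo-local as long as the commutators $[g,f(t+k)]$ are compact and norm-summable in the appropriate sense. This is where care is needed, because an infinite direct sum of compacts is not compact.

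\emph{Closing the swindle.} The algebraic identity is $\sigma(f)\simeq f\oplus \sigma(f)_1$, where $\sigma(f)_1=\sigma(f_1)$. Moreover, $\sigma(f)_1$ is homotopic to $\sigma(f)$ inside $D_L^*$ via the reparametrized shifts $s\mapsto \sigma(f)(\cdot+s)$ for $s\in[0,1]$. Uniform continuity makes this homotopy norm-continuous. Applied to projections or unitaries over matrix algebras, this gives $[p]+[\sigma(p)]=[\sigma(p)]$ in $K_*$, hence $[p]=0$.

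\emph{Main obstacle.} The delicate point is the membership $\sigma(f)\in D_L^*(Y,A)^{\mathcal{G}}$, namely pseudo-locality of the infinite sum. I expect to handle it by replacing $D_L^*$ elements, up to the ideal, with representatives in the image of $\Phi$ from Lemma \ref{lem:localized_dual_roe}. For these, $[g,f(t)]$ is supported within distance $\prop(f(t))\to0$ of $\supp(g)$.

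Alternatively, one can work with $\mathfrak{T}$: the swindle applies directly to $\mathfrak{T}D^*/\mathfrak{T}C^*$ and $C_L^*$, and the five lemma yields the claim. If pseudo-locality of $\sigma(f)$ fails, I would instead prove directly that $\partial_L$ is an isomorphism by showing $K_*(D_L^*)\to K_*(\mathfrak{T}D^*/\mathfrak{T}C^*)$ is zero. Here I would use the fact that $\mathfrak{T}B$ has the $K$-theory of $B$ via $ev_0$, together with the swindle on $C_L^*$ and $D_L^*$, where propagation control in $t$ is automatic.
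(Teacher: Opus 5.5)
Your construction is the paper's swindle. Your $\sigma$ is the paper's $\alpha_4$ transported back to $\E$ by $W$, your identity $\sigma\simeq \mathrm{id}\oplus(\tau_1\circ\sigma)$ is the paper's $\alpha_4=\alpha_1+\alpha_2$, and your time-shift homotopy is its $\alpha_2\simeq\alpha_3$. The closing $K$-theory bookkeeping is fine.

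\textbf{The gap.} The one nontrivial verification is that $\sigma(f)(t)$ is pseudo-local, and you flag it but leave it open. None of your suggested fixes supplies it.
\begin{itemize}
\item Compactness of $\bigoplus_k[g,f(t+k)]$ would require $\|[g,f(t+k)]\|\to0$. That needs a quantitative commutator-versus-propagation estimate, which you have not proved on a general metric $\G$-space.
\item The remark that $[g,f(t)]$ lives within $\prop(f(t))$ of $\supp(g)$ holds for every finite-propagation operator. It gives no decay of norms.
\item Passing to representatives in the image of $\Phi$ modulo $C_L^*$ changes nothing, because $\sigma$ must be a $*$-homomorphism on all of $D_L^*$, including the ideal.
\end{itemize}

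\textbf{Your fallbacks fail.} A forward-shift swindle cannot work on $C_L^*(Y,A)^{\G}$ or on $\mathfrak{T}D^*/\mathfrak{T}C^*$: both have the generally nonzero $K$-theory that this subsection sets out to compute. Already for $\G$ and $Y$ a point, $C_L^*$ contains the constant function equal to a rank-one projection $p$. Then $\bigoplus_k p$ is not compact, and $K_0(C_L^*)\cong\mathbb{Z}$. Likewise $\mathfrak{T}D^*/\mathfrak{T}C^*$ has the $K$-theory of the Calkin algebra. Local compactness is a diagonal condition that small propagation does not improve, so infinite direct sums destroy it.

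\textbf{The missing idea.} Check pseudo-locality in its off-diagonal form, which is built into the paper's definition: $T$ is pseudo-local iff $\phi T\psi$ is compact for all $\phi,\psi\in C_c(Y)$ with $d_Y(\supp\phi,\supp\psi)>0$.
\begin{enumerate}
\item Take a generator $f$ of $D_L^*$ and such $\phi,\psi$, with distance $\delta>0$. Since $\prop(f(s))\to0$, there is $N$ with $\prop(f(s))<\delta$ for all $s>N$. Hence $\phi f(t+k)\psi=0$ for all $k>N$ and all $t\ge0$.
\item So $\phi\bigl(\bigoplus_k f(t+k)\bigr)\psi$ is a finite direct sum of the compact operators $\phi f(t+k)\psi$ with $k\le N$, and is therefore compact.
\item Finite propagation of $\sigma(f)(t)$ holds because $\{\prop(f(t+k))\}_k$ is a sequence of finite numbers tending to $0$, hence bounded. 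Also $\sup_k\prop(f(t+k))\to0$ as $t\to\infty$.
\item Thus $\sigma$ is an isometric $*$-homomorphism on the dense $*$-subalgebra of generators and extends to $D_L^*$.
\end{enumerate}
Note that $\sigma$ maps $C_L^*$ into $D_L^*$ but not into $C_L^*$. That is exactly why the swindle kills $K_*(D_L^*)$ without killing $K_*(C_L^*)$. With this step inserted, your argument is complete and coincides with the paper's.
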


\begin{proof}
Since $\E_A$ is chosen to be very ample, i.e., $\mathcal{E}_A \cong \mathcal{E}_A^\infty = \bigoplus_{i=1}^{\infty} \mathcal{E}_A$, the top-left corner inclusion of $D_L^*(Y,A)^{\mathcal{G}}$ into $D_L^*(Y,A, \mathcal{E}_A^\infty)^{\mathcal{G}}$ induces an isomorphism on $K$-theory. We define two $*$-homomorphisms $\alpha_1, \alpha_2: D_L^*(Y,A)^{\mathcal{G}} \to D_L^*(Y,A, \mathcal{E}_A^\infty)^{\mathcal{G}}$ by
\begin{align*}
\alpha_1(f)(t) &= \mathrm{diag}(f(t), 0, 0, \dots), \\
\alpha_2(f)(t) &= \mathrm{diag}(0, f(t+1), f(t+2), \dots).
\end{align*}
To see that $\alpha_2(f)$ is well-defined, notice that since $\mathrm{prop}(f(t)) \to 0$, for any two functions $\phi, \psi \in C_0(Y)$ with disjoint compact supports, there exists an integer $N$ such that
$$\prop(f(t+n))<d_Y(\supp(\phi),\supp(\psi)),$$
for all $n > N$ and $t\in\IR_+$.
Thus, the operator $\phi f(t+n) \psi = 0$ for all $n > N$. This means the infinite block diagonal matrix $\phi \alpha_2(f)(t) \psi$ has only finitely many non-zero entries, thus their direct sum is compact in $\mathcal{K}_A(\mathcal{E}_A^\infty)$.

Since $f$ is uniformly continuous, the parameter shift $r \mapsto f(t+r)$ provides a homotopy between $\alpha_2$ and the map $\alpha_3(f)(t) = \mathrm{diag}(0, f(t), f(t+1), \dots)$. By conjugating with the canonical forward shift isometry on $\ell^2(\mathbb{N})$, $\alpha_3$ is unitarily equivalent to $\alpha_4(f)(t) = \mathrm{diag}(f(t), f(t+1), \dots) = \alpha_1(f) + \alpha_2(f)$. Passing to $K$-theory, this yields:
\[ (\alpha_1)_* + (\alpha_2)_* = (\alpha_4)_* = (\alpha_3)_* = (\alpha_2)_*. \]
This means $(\alpha_1)_* = 0$. Since $(\alpha_1)_*$ is an isomorphism, we conclude that $K_{*}(D_L^*(Y,A)^{\mathcal{G}}) = 0$. 
\end{proof}

\begin{Lem}\label{lem:evaluation_iso}
The evaluation map $ev_0$ induces a $K$-theory isomorphism whose inverse is precisely induced by the inclusion of constant functions $\kappa$:
\[ \kappa_*: K_{*+1}(\mathcal{Q}^*(Y,A)^{\mathcal{G}}) \to K_{*+1}\left(\mathfrak{T}D^*(Y,A)^{\mathcal{G}} / \mathfrak{T}C^*(Y,A)^{\mathcal{G}}\right). \]
\end{Lem}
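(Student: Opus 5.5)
We will prove the lemma by exhibiting $\mathfrak{T}(\cdot)$ as homotopy-equivalent to evaluation at zero, applied to the quotient algebra. First identify $\mathfrak{T}D^*(Y,A)^{\G}/\mathfrak{T}C^*(Y,A)^{\G}$ with $\mathfrak{T}Q^*(Y,A)^{\G}$. The quotient map $D^*\to Q^*$ induces $\mathfrak{T}D^*\to\mathfrak{T}Q^*$ with kernel $\mathfrak{T}C^*$, and we need it to be surjective. Given a bounded uniformly continuous $g:[0,\infty)\to Q^*$, we would lift it using the Bartle--Graves continuous section $\sigma:Q^*\to D^*$. Bartle--Graves gives a continuous, homogeneous, bounded-on-bounded-sets section, but not a uniformly continuous one. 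To fix this, we first approximate $g$ by a piecewise-linear function with nodes at $t\in\frac1k\IN$. We lift the nodal values arbitrarily with bounded norm and interpolate linearly in $D^*$. A diagonal/telescoping argument over $k$ then produces a uniformly continuous lift. Alternatively, one can note that $\mathfrak{T}B$ is the multiplier-like subalgebra $C_{ub}([0,\infty),B)$, and for it the quotient statement $C_{ub}(D)/C_{ub}(C)\cong C_{ub}(D/C)$ is standard via piecewise-linear lifting. We expect this to be the main technical point, though it is routine. Under this identification, $ev_0$ and $\kappa$ become the corresponding maps for $B=Q^*(Y,A)^{\G}$.

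Next, show that $ev_0:\mathfrak{T}B\to B$ is a $K$-theory isomorphism for any $C^*$-algebra $B$, with inverse $\kappa_*$. Since $ev_0\circ\kappa=\mathrm{id}_B$, we get $(ev_0)_*\kappa_*=\mathrm{id}$. For the other composition, consider the homotopy $H_s:\mathfrak{T}B\to\mathfrak{T}B$ for $s\in[0,1]$ defined by $H_s(f)(t)=f(st)$. Each $H_s$ is a $*$-homomorphism, and $H_s(f)$ is uniformly continuous with modulus bounded by that of $f$ because $s\le1$. We need $s\mapsto H_s(f)$ to be norm continuous, i.e. $\sup_t\|f(st)-f(s't)\|\to0$ as $s'\to s$. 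This fails in general for $t$ unbounded, since $|st-s't|=|s-s'|t$ is unbounded.

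To repair this, we use the reparametrized path $H_s(f)(t)=f(\min(t,\,\phi_s(t)))$, or more cleanly the truncation homotopy $H_s(f)(t)=f(\min(t,\tan(\pi s/2)))$ for $s\in[0,1)$ with $H_1=\mathrm{id}$. Continuity at $s=1$ again fails without decay. Instead we argue via $\mathfrak{T}_0B$: the split exact sequence $0\to\mathfrak{T}_0B\to\mathfrak{T}B\to B\to0$ (split by $\kappa$) reduces the claim to $K_*(\mathfrak{T}_0B)=0$. We would prove this vanishing by an Eilenberg swindle analogous to \Cref{lem:vanishing_DL}. For $f\in\mathfrak{T}_0B$, set $\beta(f)(t)=\mathrm{diag}(f(t),f(t+1),f(t+2),\dots)\in\mathfrak{T}_0(B\otimes\K)$, which requires $f(t+n)$ to be uniformly bounded (it is) and zero at $t=0$. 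However, $f(n)\neq0$ for $n\geq 1$, so the entries of $\beta(f)(0)$ after the first do not vanish; we need instead $\beta(f)(t)=\mathrm{diag}(f(t),f(2t),f(3t),\dots)$-type cone swindles. We would therefore choose the standard cone argument: $\mathfrak{T}_0B$ is contractible via $f\mapsto f(\cdot\,\wedge\,\cdot)$ combined with the uniform-continuity modulus, following \cite{DWW2018}, where exactly $K_*(\mathfrak{T}_0B)=0$ is established using a swindle over $\bigoplus$ copies indexed by dyadic scalings.

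The main obstacle is this vanishing of $K_*(\mathfrak{T}_0B)$, since naive contractions are not norm-continuous on unbounded intervals. We would cite or adapt the argument in \cite{DWW2018} (or \cite{QR2010}) for it. Granting it, the six-term sequence for the split extension gives $(ev_0)_*$ an isomorphism with inverse $\kappa_*$, which is the statement.
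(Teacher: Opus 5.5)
Your reduction is the right skeleton: $\kappa$ splits $ev_0$, so everything comes down to the $K$-theory of the kernel vanishing. Your identification $\mathfrak{T}D^*/\mathfrak{T}C^*\cong\mathfrak{T}Q^*$ is also correct. No telescoping is needed: the image of the $*$-homomorphism $\mathfrak{T}D^*\to\mathfrak{T}Q^*$ is closed, and piecewise-linear interpolation of bounded lifts at nodes $n\delta$ shows it is dense.

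The gap is the vanishing step, which you state for an arbitrary $C^*$-algebra $B$; in that generality it is false. Take $B=\IC$ and $u(t)=e^{it}$. Then $u$ is a unitary in $\mathfrak{T}\IC=C(\beta_u[0,\infty))$ with $u(0)=1$. If $[u]=0$ in $K_1$, then $u\oplus 1_n$ is homotopic to $1$ in some $U_{n+1}(\mathfrak{T}\IC)$. Taking determinants would then give $u=e^{ih}$ with $h$ bounded, real and uniformly continuous, which forces $h(t)=t+c$, a contradiction. Hence $K_1(\mathfrak{T}\IC)\neq0=K_1(\IC)$, so $(ev_0)_*$ is not an isomorphism, $\mathfrak{T}_0B$ is not contractible in general, and no citation can supply the statement you want.

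The specific device you propose also fails. For each $t>0$, $\mathrm{diag}(f(t),f(2t),f(3t),\dots)$ has infinitely many non-decaying entries, so it does not lie in the amplified algebra. It is also not uniformly continuous, since $t\mapsto f(nt)$ only has modulus $\omega_f(n\,\cdot)$.

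Two ingredients are missing: stability, and a swindle that shifts the parameter to the \emph{left}.

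\textbf{Stability.} Here this is exactly the very ampleness $\E\cong\E^\infty$ fixed at the start of the subsection. It makes the corner inclusion $\beta_1:\mathfrak{T}_0D^*(Y,A)^{\G}\to\mathfrak{T}_0D^*(Y,A,\E^\infty)^{\G}$ a $K$-theory isomorphism, because it is conjugation by a constant isometric multiplier. This matters because $\mathfrak{T}_0$ does not commute with stabilization: $K_*(\mathfrak{T}_0\K)=0$ while $K_*(\mathfrak{T}_0\IC)\neq0$.

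\textbf{The left-shift swindle.} Extend $f$ by $0$ on $(-\infty,0]$ and set $\beta_2(f)(t)=\mathrm{diag}(0,f(t-1),f(t-2),\dots)$.
\begin{itemize}
\item For each $t$ only the entries with $n<t$ are nonzero, so this is a finite block sum lying in the amplified algebra.
\item It has the same modulus of continuity as $f$ and vanishes at $t=0$.
\item The homotopy $r\mapsto\mathrm{diag}(0,f(t-1+r),f(t-2+r),\dots)$, $r\in[0,1]$, followed by conjugation with the shift isometry, gives $(\beta_1)_*+(\beta_2)_*=(\beta_2)_*$. Hence $(\beta_1)_*=0$.
\end{itemize}

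This is the paper's argument. It is run on $\mathfrak{T}_0D^*$ and on $\mathfrak{T}_0C^*$, and then the six-term sequence handles $\mathfrak{T}_0D^*/\mathfrak{T}_0C^*$, which is precisely the kernel of $ev_0$ on $\mathfrak{T}D^*/\mathfrak{T}C^*$. With these two ingredients, the same swindle would also work directly on your $\mathfrak{T}_0Q^*$.
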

\begin{proof}
It is direct to see that the evaluation map $ev_0$ serves as a left inverse to $\kappa$. To prove $\kappa_*$ is an isomorphism, it suffices to prove that the kernel of $ev_0$ at the quotient level has trivial $K$-theory, i.e.,
$$K_*\Big(\mathfrak{T}_0 D^*(Y,A)^{\mathcal{G}} / \mathfrak{T}_0 C^*(Y,A)^{\mathcal{G}}\Big)=0.$$
We apply another Eilenberg swindle shifting the parameter to the left. Let $\beta_1, \beta_2: \mathfrak{T}_0 D^*(Y,A)^{\mathcal{G}} \to \mathfrak{T}_0 D^*(Y,A, \mathcal{E}_A^\infty)^{\mathcal{G}}$ be defined by
\begin{align*}
\beta_1(f)(t) &= \mathrm{diag}(f(t), 0, 0, \dots), \\
\beta_2(f)(t) &= \mathrm{diag}(0, f(t-1), f(t-2), \dots),
\end{align*}
where we set $f(s) = 0$ for $s \le 0$. For any $t \ge 0$, $f(t-N)=0$ if $N\geq \lfloor t \rfloor + 1$. Consequently, the infinite block diagonal matrix $\beta_2(f)(t)$ contains only finitely many non-zero entries, trivially ensuring pseudo-locality. An identical geometric homotopy argument to Lemma \ref{lem:vanishing_DL} shows that $K_{*}(\mathfrak{T}_0D^*(Y,A)^{\mathcal{G}}) = 0$. The identical vanishing holds for $\mathfrak{T}_0C^*(Y,A)^{\mathcal{G}}$. By the six-term exact sequence, the $K$-theory of the quotient is also trivial.
\end{proof}

We are now ready to finish the proof of \Cref{thm:localization algebra}.

\begin{proof}[Proof of \Cref{thm:localization algebra}]
By composing the $K$-theory isomorphisms established in Lemma \ref{lem:vanishing_DL} and Lemma \ref{lem:evaluation_iso}, we obtain a canonical isomorphism for the localization algebra:
\[ \partial_L \circ \kappa_*: K_{*+1}(\mathcal{Q}^*(Y,A)^{\mathcal{G}}) \xrightarrow{\cong} K_{*}\left(C_L^*(Y,A)^{\mathcal{G}}\right). \]

Consider the following commuting diagram
\[
\begin{tikzcd}[column sep=large, row sep=large]
0 \arrow[r] & C_L^*(Y,A)^{\mathcal{G}} \arrow[r] \arrow[d, "ev"] & D_L^*(Y,A)^{\mathcal{G}} \arrow[r] \arrow[d, "ev"] & \mathfrak{T}D^*(Y,A)^{\mathcal{G}} / \mathfrak{T}C^*(Y,A)^{\mathcal{G}} \arrow[r] \arrow[d, "ev"] & 0 \\
0 \arrow[r] & C^*(Y,A)^{\mathcal{G}} \arrow[r] & D^*(Y,A)^{\mathcal{G}} \arrow[r] & \mathcal{Q}^*(Y,A)^{\mathcal{G}} \arrow[r] & 0
\end{tikzcd}
\]
All the vertical maps are given by the evaluation-at-zero map.
Applying the $K$-theory functor to the above diagram, we have that
\[
\begin{tikzcd}
{K_{*+1}\left(\frac{\mathfrak{T}D^*(Y,A)^{\mathcal{G}}}{\mathfrak{T}C^*(Y,A)^{\mathcal{G}}}\right)} \arrow[r,"\partial_L", "\cong"'] \arrow[d, "ev_*", shift left] & {K_{*}\left(C_L^*(Y,A)^{\mathcal{G}}\right)} \arrow[d, "ev_*"] \\
{K_{*+1}(\mathcal{Q}^*(Y,A)^{\mathcal{G}})} \arrow[r, "\partial_*"] \arrow[u, "\kappa_*", shift left]                                                                                 & {K_{*}\left(C^*(Y,A)^{\mathcal{G}}\right)}                    
\end{tikzcd}
\]
This confirms that under the isomorphism $\partial_L \circ \kappa_*$, the evaluation map $ev_*$is equivalent to $\partial_*$. 
\end{proof}

\subsection{Equivalence of the Assembly Maps}\label{sec: equivalence of assembly maps}

In this subsection, we show that the equivariant coarse Baum--Connes conjecture for the system $(X,\Gamma)$ is equivalent to the Baum--Connes conjecture for the equivariant coarse groupoid $G(X,\Gamma)$ with coefficients in $\ell^\infty(X)^\Gamma$. 

The equivariant coarse Baum--Connes conjecture states that the coarse assembly map
\[ \mu_{X,\Gamma}: \lim_{d\to\infty}K^{\Gamma}_*(P_d(X))\to K_*(C^*(X)^{\Gamma}) \]
is an isomorphism. On the other hand, the groupoid Baum--Connes assembly map for $G(X,\Gamma)$ with coefficients in $\ell^\infty(X)^\Gamma$ is given by:
\[ \mu_{G(X,\Gamma)}: K_*^{\mathrm{top}}(G(X,\Gamma), \ell^\infty(X,\K)^\Gamma) \to K_*\big( \ell^\infty(X,\K)^\Gamma \rtimes_r G(X,\Gamma) \big). \]
By \Cref{cor: equivariant Roe and groupoid}, the equivariant Roe algebra $C^*(X)^\Gamma$ is canonically isomorphic to the reduced groupoid crossed product $\ell^\infty(X,\K)^\Gamma \rtimes_r G(X,\Gamma)$. It therefore suffices to prove that the topological domain $\lim_{d\to\infty}K^{\Gamma}_*(P_d(X))$ is naturally isomorphic to $K_*^{\mathrm{top}}(G(X,\Gamma), \ell^{\infty}(X,\K)^{\Gamma})$, and that this isomorphism intertwines with the respective assembly maps. 

Let $i_d: X \to P_d(X)$ be the canonical $\Gamma$-equivariant inclusion. For any $R \ge 0$, define
\[ W_{R,d} = \{(x,w) \in X \times P_d(X) \mid d_{P_d(X)}(i_d(x), w) \le R\}. \]
Since $i_d$ is $\Gamma$-eqivariant, $W_{R,d}$ is $\Ga$-invariant under the canonical $\Ga$-diagonal action.
For any metric space $X$, we shall denote $\beta_uX$ the Gelfand spectrum of the $C^*$-algebra $C_{ub}(X)$, of all bounded and uniformly continuous functions, which is called the \emph{uniform compactification} of $M$, or \emph{Samuel compactification} of $M$, see \cite{Samuel1948}. By definition, two sequences $\{x_i\}_{i\in\IN}$ and $\{y_i\}_{i\in\IN}$ in $X$ have the same limit in $\beta_u X$ if and only if $\lim_{i\to\infty}d(x_i,y_i)=0$. Indeed, if $\lim_{i} d(x_i, y_i) = 0$, then $|f(x_i) - f(y_i)| \to 0$ for all $f \in C_{ub}(X)$. Conversely, if $d(x_i, y_i)$ does not tend to $0$, one can pass to a subnet where $d(x_i, y_i) \ge \varepsilon_0 > 0$. The $1$-Lipschitz (hence uniformly continuous) function $f(x) = \min\{d(x, \{x_i\}), \varepsilon_0\}$ satisfies $f(x_i) = 0$ and $f(y_i) = \varepsilon_0$.

\begin{Lem}\label{lem:Xd_construction}
The uniform boundary space
\[ X_d = \bigcup_{R \ge 0} \overline{W_{R,d}/\Gamma} \subseteq \beta_u\big((X \times P_d(X))/\Gamma\big) \]
is a locally compact Hausdorff space equipped with a continuous anchor map $\rho: X_d \to \beta(X/\Gamma)$, extended by the map $(X \times P_d(X))/\Gamma\to X/\Ga$, defined by $[x, w]\mapsto [x]$.
\end{Lem}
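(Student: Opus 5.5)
We need to establish three things: that each $\overline{W_{R,d}/\Gamma}$ is compact Hausdorff, that $X_d$ is open-exhausted by them (hence locally compact Hausdorff), and that the anchor $\rho$ exists and is continuous. The plan mirrors the treatment of $G(X,\Gamma)$, with $\beta$ replaced by the uniform compactification.

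\textbf{Ambient metric and compact pieces.} First we give $(X\times P_d(X))/\Gamma$ the quotient metric
\[ d([x,w],[x',w']) = \inf_{\gamma}\bigl(d(x,x'\gamma)+d(w,w'\gamma)\bigr). \]
This is a genuine metric: the action is isometric, and it is proper and free, so orbits are closed and the infimum is attained. Hence $\beta_u$ of the quotient is a compact Hausdorff space, being the Gelfand spectrum of the unital commutative $C^*$-algebra $C_{ub}$. Each closure $\overline{W_{R,d}/\Gamma}$ is therefore compact Hausdorff, and $X_d$, as a subspace of a Hausdorff space, is Hausdorff.

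\textbf{Local compactness: the main obstacle.} The key step is to show that each $\overline{W_{R,d}/\Gamma}$ is contained in the interior, relative to $X_d$, of $\overline{W_{R+1,d}/\Gamma}$. Then every point of $X_d$ has a compact neighbourhood, which gives local compactness. For this we will construct a uniformly continuous (indeed $1$-Lipschitz) function
\[ \phi([x,w]) = \min\{1,\ \max\{0,\ d(i_d(x),w)-R\}\}. \]
It is well defined on orbits by invariance and $1$-Lipschitz in the quotient metric, since $|d(i_d x,w)-d(i_d x',w')|\le d(x,x')+d(w,w')$ and $i_d$ is $1$-Lipschitz on a uniformly discrete space (after normalizing constants; otherwise use Lipschitz constant $L$). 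So $\phi$ extends continuously to $\beta_u$. The set $\{\phi<1\}$ is open. Its intersection with the dense discrete-level set lies in $W_{R+1,d}/\Gamma$, and it contains $\overline{W_{R,d}/\Gamma}$, because $\phi=0$ on $W_{R,d}/\Gamma$. Hence $\overline{W_{R,d}/\Gamma}\subseteq\{\phi<1\}\cap X_d\subseteq\overline{W_{R+1,d}/\Gamma}$. The last inclusion holds because $\{\phi<1\}$ is open, so its points are limits of points of the dense subset that also satisfy $\phi<1$. The subtle point is exactly this use of uniform continuity: plain $\beta$ of a non-discrete space would not let us extend $\phi$. This is why $\beta_u$ is used.

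\textbf{Anchor map.} The map $\rho_0[x,w]=[x]$ takes values in the discrete space $X/\Gamma$, so it is not uniformly continuous globally and we cannot extend it naively. We will therefore define $\rho$ piecewise on each $\overline{W_{R,d}/\Gamma}$. Fix $f\in\ell^\infty(X/\Gamma)$. Since $X$ is uniformly discrete with separation $\delta$, two points of $W_{R,d}/\Gamma$ at quotient distance less than $\delta$ have the same first-coordinate orbit. So $f\circ\rho_0$ is uniformly continuous on $W_{R,d}/\Gamma$. To extend it to the ambient space $\beta_u$, we multiply by a cutoff, or use that uniformly continuous bounded functions on a subset extend (Katětov/McShane extension for real-valued functions, applied to real and imaginary parts). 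This yields a unital $*$-homomorphism
\[ \ell^\infty(X/\Gamma)\to C\bigl(\overline{W_{R,d}/\Gamma}\bigr), \]
well defined because $W_{R,d}/\Gamma$ is dense in its closure, so restrictions are determined. By Gelfand duality it gives continuous maps $\rho_R:\overline{W_{R,d}/\Gamma}\to\beta(X/\Gamma)$. These maps are compatible for $R\le R'$, since they agree on the dense set. They therefore glue to a map $\rho$ on $X_d$, which is continuous because the interiors from the previous step form an open cover on which $\rho$ is continuous.
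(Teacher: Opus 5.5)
Your proposal is correct, but it differs from the paper's proof at both nontrivial points.

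\textbf{Local compactness.} The paper only notes that each $\overline{W_{R,d}/\Gamma}$ is compact and concludes that their union $X_d$ is locally compact. That inference needs exactly the extra step you supply. A union of compacta need not be locally compact. Moreover, unlike $\overline{\Delta_R/\Gamma}$ inside $\beta$ of a discrete space, closures in $\beta_u$ of the non-discrete space $Z_d=(X\times P_d(X))/\Gamma$ need not be open. Your cutoff $\phi$ gives $\overline{W_{R,d}/\Gamma}\subseteq\{\phi<1\}\subseteq\overline{W_{R+1,d}/\Gamma}$ with $\{\phi<1\}$ open in $\beta_u(Z_d)$. So $X_d$ is even an open subset of a compact Hausdorff space. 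You do not need $i_d$ to be Lipschitz for this. At quotient distance below the separation constant $\delta$ of $X$, you may choose representatives with equal first coordinate, and then $|\phi([x,w])-\phi([x,w'])|\le d(w,w')$ already gives uniform continuity.

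\textbf{Anchor map.} Here your assertion that $\rho_0$ is not globally uniformly continuous is wrong. Your own $\delta$-argument never uses $R$: on all of $Z_d$, quotient distance $<\delta$ forces $x=x'\gamma$ for some $\gamma$, hence equal first-coordinate orbits. Therefore $f\circ\rho_0\in C_{ub}(Z_d)$ for every $f\in\ell^\infty(X/\Gamma)$. So $\rho_0$ extends at once to a continuous map $\beta_u(Z_d)\to\beta(X/\Gamma)$. This is the paper's route: it extends the uniformly continuous quotient map into the uniformly discrete space $X/\Gamma$, for which $\beta_u=\beta$. Your piecewise Kat\v{e}tov extension and gluing over the open sets $\{\phi<1\}$ is valid, but unnecessary.
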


\begin{proof}
Because the $\Gamma$-action on $X$ is free and proper, the diagonal right action on $X \times P_d(X)$ is also proper and free. Consequently, the quotient space $Z_d = (X \times P_d(X))/\Gamma$ is a proper metric space, equipped with the quotient metric. By taking the uniform compactification of the locally compact Hausdorff quotient $Z_d$, we have that $\beta_u(Z_d)$ is a compact Hausdorff space.

Since $W_{R,d}$ is $\Gamma$-invariant, its quotient $W_{R,d}/\Gamma$ is a closed subset of $Z_d$. Therefore, its closure in the uniform compactification $\beta_u(Z_d)$ is a compact Hausdorff space, and $X_d$ (as a union of these closures) is a locally compact Hausdorff space.

The projection map $p_1: X \times P_d(X) \to X$ is uniformly continuous and $\Gamma$-equivariant. It descends to a uniformly continuous map $\widetilde{p}_1: Z_d \to X/\Gamma$. By the universal property of the uniform compactification, $\widetilde{p}_1$ extends uniquely to a continuous map on the compactifications $\overline{p}_1: \beta_u(Z_d) \to \beta_u(X/\Gamma)$. Because $X$ is uniformly discrete with bounded geometry, any bounded function on $X/\Gamma$ is uniformly continuous, making $\beta_u(X/\Gamma)$ naturally identified with $\beta(X/\Gamma)$. The restriction of $\overline{p}_1$ to $X_d$ provides the required continuous anchor map $\rho: X_d \to \beta(X/\Gamma)$.
\end{proof}

\begin{Lem}\label{lem:Xd_metric_cocompact}
The space $X_d$ is a proper metric $G(X,\Gamma)$-space (in the sense of \Cref{def:proper G-space}). Furthermore, the left action of $G(X,\Gamma)$ on $X_d$ is fiberwise isometric and cocompact.
\end{Lem}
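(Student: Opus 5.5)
Our plan is to build every structure on $X_d$ first at the discrete/metric level on $Z_d=(X\times P_d(X))/\Gamma$, check that it is uniformly continuous with uniform constants on each $W_{R,d}/\Gamma$, and then extend it to the closures in $\beta_u(Z_d)$. This follows the pattern used for the groupoid structure on $G(X,\Gamma)$, with \Cref{lem:key lemma} replaced by a uniform-compactification analogue.

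\emph{The action.} For $[x_1,x_2]\in\Delta_{R'}/\Gamma$ and $[x_2,w]\in W_{R,d}/\Gamma$ we set
\[
[x_1,x_2]\cdot[x_2,w]=[x_1,w].
\]
Freeness of the action makes this well defined. Since $i_d$ is an equivariant coarse equivalence, $d(i_d(x_1),i_d(x_2))\le S(R')$, so the product lies in $W_{R+S(R'),d}/\Gamma$. To extend to closures, we use the fibered product $\Delta_{R'}/\Gamma\times_{X/\Gamma}W_{R,d}/\Gamma$ and, via \Cref{lem:partition}, partition $\Delta_{R'}/\Gamma$ into finitely many pieces $V_i$ on which $s_0$ is injective. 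On each piece the action is the map
\[
(v,[x_2,w])\mapsto[x_1(v),w],
\]
which is an isometry of $Z_d$ onto its image (the metric only changes the $x$-coordinate by a fixed bounded amount). Hence it is uniformly continuous and extends to $\beta_u$. The analogue of \Cref{lem:key lemma} for $\beta_u$ here is that $\overline{V_i}\times_{\beta(X/\Gamma)}\overline{W_{R,d}/\Gamma}$ is homeomorphic to $\overline{s_0^{-1}(\cdot)}$-restricted $\overline{W_{R,d}/\Gamma}$. We prove it exactly as before, with $\ell^\infty$ replaced by $C_{ub}$ and using that $\ell^\infty(X/\Gamma)$ acts on $C_{ub}$ of the fiber.

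\emph{The fiberwise metric.} On the discrete level, the fiber over $[x]$ is $\{[x,w]:w\in P_d(X)\}$, which freeness identifies with $P_d(X)$. We set
\[
d_{[x]}([x,w],[x,w'])=d_{P_d}(w,w').
\]
This is invariant under changing the representative, since $\Gamma$ acts isometrically. To extend to limit points $\omega\in\beta(X/\Gamma)$, we define $d_\omega(\xi,\xi')$ as the limit along nets $[x_\alpha,w_\alpha]\to\xi$ and $[x_\alpha,w'_\alpha]\to\xi'$ with common first coordinate. Existence and independence of this limit come from the fact that $(\xi,\xi')\mapsto d$ is a bounded uniformly continuous function on $W_{R,d}/\Gamma\times_{X/\Gamma}W_{R,d}/\Gamma$ (bounded by $2R$), which therefore extends by the fibered-product lemma. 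Nets with distance tending to $0$ have the same limit in $\beta_u$, which gives positivity of the extended metric. Continuity of the family $\{d_\omega\}$ and the triangle inequality pass to the limit. Fiberwise isometry is immediate on the dense part, because $[x_1,x_2]$ acts by $[x_2,w]\mapsto[x_1,w]$ and does not change $w$, and it then extends by continuity.

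\emph{Properness and cocompactness.} Properness of the metric in the sense of \Cref{def:proper G-space}(1) follows because a closed set that is fiberwise bounded by $M$ and lies over a compact set sits inside $\overline{W_{R+M,d}/\Gamma}$, which is compact. Properness of the action follows from properness of the action of $G(X,\Gamma)$ on its own $\overline{\Delta_R/\Gamma}$ pieces, since $\xi$ and $g\xi$ both in $\overline{W_{R,d}/\Gamma}$ forces $g\in\overline{\Delta_{R''}/\Gamma}$ by coarse properness of $i_d$. Cocompactness holds because every $[x,w]$ with $d(i_d(x),w)\le R$ can be moved into $W_{R_0,d}/\Gamma$ by an element of $\Delta/\Gamma$. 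Indeed, $w$ is within distance $1$ of some vertex $i_d(x')$, so
\[
[x,w]=[x,x']\cdot[x',w]\quad\text{with}\quad[x',w]\in W_{1,d}/\Gamma.
\]
Hence $X_d=G(X,\Gamma)\cdot\overline{W_{1,d}/\Gamma}$, and the latter set is compact.

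The main obstacle is the fibered-product homeomorphism for the uniform compactification. It underlies the continuity of both the action and the metric, and unlike the discrete case the fibers $W_{R,d}/\Gamma\cap\rho^{-1}[x]$ are infinite continua. We would try to prove it by trivializing on the finitely many pieces of \Cref{lem:partition}, and by showing that over each such piece $C_{ub}$ of the fibered product equals $\ell^\infty(X/\Gamma)\otimes_{\ell^\infty}C_{ub}(W_{R,d}/\Gamma)$. This should work because the bounded-geometry bounds are uniform.
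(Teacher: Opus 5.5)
Your proposal follows essentially the paper's route---define the action $[x_1,x_2]\cdot[x_2,w]=[x_1,w]$ and the pulled-back fiber metric $d_{P_d(X)}$ on the dense part $Z_d$, extend both to the closures $\overline{W_{R,d}/\Gamma}\subseteq\beta_u(Z_d)$, check isometry on the dense part, get metric properness from compactness of $\overline{W_{R,d}/\Gamma}$, and get cocompactness by factoring $[x,w]=[x,x']\cdot[x',w]$ through a vertex $i_d(x')$ near $w$---and your fibered-product lemma for $\beta_u$ makes explicit what the paper's choice of approximating nets with common anchors tacitly uses (for the metric this lemma rests on the uniform total boundedness of the balls $B(i_d(x),R)\subseteq P_d(X)$ coming from bounded geometry, not on \Cref{lem:partition}, since those fibers are infinite). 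Only minor repairs are needed: in the spherical metric a point of $P_d(X)$ need not lie within distance $1$ of a vertex, so use the constant $D$ for which $i_d(X)$ is a $D$-net, and add the paper's subnet/compactness argument to pass from $Z_d$ to all of $X_d$; the piecewise action maps are not isometries of $Z_d$ but local isometries below the uniform-discreteness scale of $X$, which suffices for uniform continuity; and, exactly as in the paper, your properness step implicitly measures boundedness from the base section $[x]\mapsto[x,i_d(x)]$, because a fiberwise diameter bound alone does not confine a closed set to a single $\overline{W_{R,d}/\Gamma}$.
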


\begin{proof}
We first show the proper metric $G(X,\Gamma)$-space structure. On the dense discrete subset $Z_d = (X \times P_d(X))/\Gamma$, the anchor map to $X/\Gamma$ is given by $[x, y] \mapsto [x]$. For a fixed orbit $[x] \in X/\Gamma$, the fiber is naturally identified with $P_d(X)$ via $y \mapsto [x,y]$. We define the distance between two elements in the same fiber by pulling back the metric from the Rips complex:
\[ d_{[x]}\big([x, y_1], [x, y_2]\big) := d_{P_d(X)}(y_1, y_2). \]
This is well-defined independent of the orbit representative because the diagonal right $\Gamma$-action on $X \times P_d(X)$ acts isometrically on $P_d(X)$.

To construct the metric concretely on each fiber of the uniform boundary space $X_d$, consider any two elements $z_1, z_2 \in X_d$ sharing the same anchor $\rho(z_1) = \rho(z_2) = \omega \in \beta(X/\Gamma)$. Because $X_d = \bigcup_{R \ge 0} \overline{W_{R,d}/\Gamma}$, there exists $R \ge 0$ such that both $z_1, z_2 \in \overline{W_{R,d}/\Gamma}$. 

We choose two nets $\{\alpha_i\}$ and $\{\beta_i\}$ in $W_{R,d}/\Gamma$ converging to $z_1$ and $z_2$ respectively, such that they share the same anchor points $\rho(\alpha_i) = \rho(\beta_i)$ in $X/\Gamma$. For each $i$, one directly has that $d_{\rho(\alpha_i)}\big(\alpha_i, \beta_i\big) \le 2R$ by triangle inequality. We define the metric on the fiber over $\omega$ by
\[ d_\omega(z_1, z_2) = \lim_{i} d_{\rho(\alpha_i)}\big(\alpha_i, \beta_i\big). \]
Since two nets $\{\alpha_i\}$ and $\{\alpha'_i\}$ have the same limit in uniform compactification if and only if $\lim_{i} d_{\rho(\alpha_i)}(\alpha_i, \alpha'_i) = 0$, this guarantees that the limit $d_\omega(z_1, z_2)$ exists and is independent of the choice of approximating nets. This produces a well-defined, continuous family of metrics on the fibers of $X_d$.

Recall that $G(X,\Gamma) = \bigcup_{S \ge 0} \overline{\Delta_S/\Gamma} \subseteq \beta_u((X \times X)/\Gamma)$. For $g = [x_1, x_2] \in \Delta_S/\Gamma$ and $z = [x_2, y] \in W_{R,d}/\Gamma$, we set
\[ [x_1, x_2] \cdot [x_2, y] = [x_1, y]. \]
By the triangle inequality in $P_d(X)$, $d_{P_d(X)}(i_d(x_1), y) \le S + R$, meaning the resulting element belongs to $W_{S+R,d}/\Gamma$. This composition then extends continuously to a proper left action of $G(X,\Gamma)$ on $X_d$ by using a similar argument with \Cref{lem:key lemma}.

To see this action is fiberwise isometric, we first check this on the fibers of $X/\Gamma$. Applying $g = [x_1, x_2]$ on $[x_2, y_1]$ and $[x_2, y_2]$, we shall get $[x_1,y_1]$ and $[x_1,y_2]$ respectively. Then
\[d_{[x_1]}([x_1,y_1], [x_1,y_2])=d_{[x_2]}([x_2,y_1], [x_2,y_2])=d_{P_d(X)}(y_1,y_2).\]
This shows that the action is densely isometric on $Z_d$. By continuity, its continuous extension is still fiberwise isometric. To verify the properness of the metric, assume that $K \subseteq X_d$ is a closed subset which is bounded by $R$ on each fiber. Then $K$ must be contained in $\overline{W_{R,d}/\Gamma}$ for some $R \ge 0$, which is compact. Thus, bounded closed sets are compact.

Finally, we shall prove that the action is cocompact. The inclusion $i_d: X\to P_d(X)$ is an equivariant coarse equivalence, thus there exists $D > 0$ such that the vertex set $i_d(X)$ is a $D$-net in $P_d(X)$. 
For any element $z = [x_1, y] \in W_{R,d}/\Gamma \subseteq Z_d$,  we can find a vertex $x_2 \in X$ such that $d_{P_d(X)}(i_d(x_2), y) \le D$. This means the element $[x_2, y]$ strictly belongs to $W_{D,d}/\Gamma$. Since $[x_1, y]\in W_{R,d}$ and $[x_2, y]\in W_{D,d}$, we conclude that $d(x_1,x_2)\leq R+D$. Consider the groupoid element $g = [x_1, x_2]\in \Delta_{R+D}/\Ga\subseteq G(X,\Gamma)$. Since $g \cdot [x_2, y] = [x_1, y] = z$, we have that the $G(X,\Gamma)$-orbit of $z$ in $W_{R,d}/\Gamma$ intersects $W_{D,d}/\Gamma$, for any $R\geq 0$. 

By continuity, any element $w \in \overline{W_{R,d}/\Gamma} \subseteq X_d$ is the limit of a net $z_i \in W_{R,d}/\Gamma$. Each $z_i$ can be factored as $z_i = g_i \cdot v_i$ for $v_i \in W_{D,d}/\Gamma$ and $g_i \in \Delta_{D+R}/\Gamma$. Because $\overline{W_{D,d}/\Gamma}$ and $\overline{\Delta_{R+D}/\Gamma}$ are compact, we can pass to converging subnets $v_i \to v \in \overline{W_{D,d}/\Gamma}$ and $g_i \to g \in \overline{\Delta_{R+D}/\Gamma}$. By the continuity of the action, $w = g \cdot v$. This proves that
\[ X_d = G(X,\Gamma) \cdot \overline{W_D/\Gamma}. \]
This shows that the action is cocompact.
\end{proof}

\begin{Lem}\label{lem:K_homology_iso}
For any $d \ge 0$, there is a canonical isomorphism:
\[ K_*^\Gamma(P_d(X)) \cong K_*\big(C^*_L(X_d, \ell^\infty(X,\K)^\Gamma)^{G(X,\Gamma)}\big). \]
\end{Lem}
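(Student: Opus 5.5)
Since $K_*^\Gamma(P_d(X))\cong K_*(C^*_L(P_d(X))^\Gamma)$ by the localization-algebra description of equivariant $K$-homology, it suffices to identify the $C^*$-algebras (or at least their $K$-theories) $C^*_L(P_d(X))^\Gamma$ and $C^*_L(X_d,A)^{G(X,\Gamma)}$, where $A=\ell^\infty(X,\K)^\Gamma$. The approach is to extend the operator-kernel dictionary of \Cref{thm: groupoid to Roe} and \Cref{cor: equivariant Roe and groupoid} from $X$ to the Rips complex, fiber by fiber over $X/\Gamma$.

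\textbf{Step 1: the module.} On the dense part $Z_d$ the fiber over $[x]$ is identified with $P_d(X)$ via $y\mapsto[x,y]$, isometrically by \Cref{lem:Xd_metric_cocompact}. We put on each fiber the measure pulled back from a fixed $\Gamma$-invariant measure on $P_d(X)$, namely Lebesgue measure on the simplices. This family is $G(X,\Gamma)$-equivariant on the dense part. We then check that $x\mapsto\int f\,d\mu_x$ extends continuously to $\beta(X/\Gamma)$ for $f\in C_c(X_d)$. Bounded geometry makes this function a bounded function on $X/\Gamma$, and uniform continuity of $f$ in the Samuel compactification gives the extension. This yields the explicit full $\rho$-system promised in \Cref{rem:rho_system_existence}, and hence an admissible module $\E_{X_d}$ as in \eqref{eq: admissible rep}.

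\textbf{Step 2: a $*$-homomorphism $\Psi$ from $\IC_L[P_d(X)]^\Gamma$ into $C^*_L(X_d,A)^{G}$.} A $\Gamma$-invariant, locally compact, finite-propagation operator $T$ on $H_{P_d(X)}$ is determined by its restrictions $\chi_{B}T\chi_{B'}$ to bounded pieces. Such a $T$ gives, for each $[x]$, an operator $T_{[x]}$ on the fiber over $[x]$, namely $T$ transported by $y\mapsto[x,y]$. These fibers carry the coefficient algebra $A_{[x]}$, which encodes $\K(\ell^2(\Gamma)\otimes H)$ over the orbit. Finite propagation and $\Gamma$-invariance make the family uniformly bounded and, together with uniform continuity, continuous over $\beta(X/\Gamma)$. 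So it defines an adjointable, locally compact, $G(X,\Gamma)$-invariant operator. Invariance under $[x_1,x_2]$ is exactly the statement that the family is given by one operator $T$, as in Step 1 of \Cref{thm: groupoid to Roe}. The map preserves propagation, so it sends localization paths to localization paths.

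\textbf{Step 3: isomorphism.} Conversely, an invariant operator on $X_d$ is determined by its fiber over one point $[x]\in X/\Gamma$. The fiber over $[x]$ is the whole of $P_d(X)$, with the identification given by the action. So we recover a $\Gamma$-invariant operator on $P_d(X)$, and the norms agree by the lower semicontinuity argument of \Cref{lem:lsc} and density of $X/\Gamma$. Hence $\Psi$ extends to an isomorphism of Roe algebras and of localization algebras, up to the standard independence of the $K$-theory from the choice of (very) ample module. That independence is proved by the usual covering-isometry argument, which is available here because propagation is controlled fiberwise.

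The main obstacle is Step 3 for operators of small propagation on the boundary fibers, i.e.\ fibers over $\omega\in\beta(X/\Gamma)\setminus X/\Gamma$. One must show that local compactness over the boundary and continuity of kernels are automatic consequences of uniform estimates on the dense part. I would handle this by restricting to operators supported on $\overline{W_{R,d}/\Gamma}$ and using \Cref{lem:key lemma} to reduce to $\ell^\infty$-families of kernels indexed by $X/\Gamma$. The cocompactness from \Cref{lem:Xd_metric_cocompact} supplies the uniformity.
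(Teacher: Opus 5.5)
Your outline follows the paper's own proof: Lebesgue measure plus Riesz for the $\rho$-system, restriction to the fiber over $[x_0]$ with isotropy $\Ga$, and transport back over $X/\Ga$. It also inherits the one step the paper asserts without proof, namely that an arbitrary $\wt T\in\IC[P_d(X)]^\Ga$, transported to all fibers, defines an adjointable, locally compact operator on $\E_{X_d}$. Your Step 2 justifies this by ``uniform continuity''. But nothing makes a general $\wt T$ uniform across different $\Ga$-orbits, while the module is uniform, so the step fails as soon as $X/\Ga$ is infinite.

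Indeed, $L^2(X_d,\mu)$ is the completion of $C_c(X_d)$ in the norm $\sup_{[x]\in X/\Ga}\|\xi_{[x]}\|_{L^2(P_d(X))}$. The restrictions of elements of $C_c(X_d)$ to the fibers over $X/\Ga$ are uniformly bounded, equi-uniformly continuous, and supported within a fixed distance of $x$. Hence every $\eta\in\E_{X_d}$ satisfies $\|(\chi_{B_n}\ox 1)\eta_{[x_n]}\|\to 0$ whenever the $x_n$ represent distinct orbits and the $B_n$ are balls near $x_n$ with $\mu_0(B_n)\to 0$.

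Now take a fixed-shape normalized bump $q_n$ at $x_n$, $p_n=\mu_0(B_n)^{-1/2}\chi_{B_n}$, and suitable unit multiplicity vectors $v_n$. The operator $\wt T=\sum_n\sum_{\gamma\in\Ga}u_\gamma\,|p_n\ox v_n\rangle\langle q_n\ox v_n|\,u_\gamma^*$ is a $\Ga$-invariant partial isometry of finite propagation and locally finite rank. Yet it maps the element of $\E_{X_d}$ built from the bumps to a family violating this decay. The same estimate shows that $\wt T$ lies at positive distance from the image of $C^*(X_d,A)^{G(X,\Ga)}$. Using $q_n\ox e_1\mapsto q_n\ox e_n$ for an orthonormal sequence $(e_n)$ in $H_0$ instead gives the same failure even when $P_d(X)$ is discrete.

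So restriction to a fiber is injective and isometric, but not surjective. It is isometric because the norm of $A=\ell^\infty(X,\K)^\Ga$, and hence of $\mathcal{L}_A(\E_{X_d})$, is a supremum over $X/\Ga$; \Cref{lem:lsc} is not the reason. Since the image is a ``uniform'' subalgebra rather than a full Roe algebra, appealing to independence of the ample module on the $P_d(X)$ side does not obviously help.

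Consequently, the difficulty is not at the boundary, as your final paragraph suggests. If an operator already preserves $\E_{X_d}$, its fibers over $\beta(X/\Ga)\setminus X/\Ga$ are determined as quotient operators, and nothing needs extending. The ``uniform estimates on the dense part'' that you plan to feed into \Cref{lem:key lemma} are exactly what a general $\Ga$-invariant operator lacks once $P_d(X)/\Ga$ is non-compact.

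To close the gap you need one of two things. The first is an admissible module of $\ell^\infty$-type over $X/\Ga$, together with a fresh justification of \Cref{thm:localization algebra} for it; this is not automatic, since $A$ is neither separable nor $\sigma$-unital. The second is a separate argument that the inclusion of the uniform localization algebra into $C^*_L(P_d(X))^\Ga$ is an isomorphism on $K$-theory. One possibility is a Mayer--Vietoris reduction over a uniformly bounded cover to the zero-dimensional case, where the $\ell^\infty(X,\K)^\Ga$ coefficients absorb non-uniform multiplicities.
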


\begin{proof}
Before establishing the isomorphism, we briefly explain how $X_d$ is equipped with a canonical full $\rho$-system (which justifies the assumption in \Cref{rem:rho_system_existence}). On the dense discrete subset $X/\Gamma$, each fiber $(X_d)_{[x]}$ is naturally identified with the Rips complex $P_d(X)$. Let $\mu_0$ be the standard Lebesgue measure on the simplicial complex $P_d(X)$. For any $f \in C_c(X_d)$, the fiberwise integral on the dense subset defines a function on $X/\Gamma$:
\[ [x] \mapsto \int_{P_d(X)} f([x,y]) d\mu_0(y). \]
Since $f$ is uniformly continuous on $W_{R,d}$ for some $R>0$, this integral map is bounded, hence belongs to $\ell^\infty(X/\Gamma) \cong C(\beta(X/\Gamma))$. By the universal property of the Stone-\v{C}ech compactification, this continuous $C(\beta(X/\Gamma))$-valued functional extends uniquely. By the Riesz representation theorem, this canonical extension exactly defines a continuous family of strictly positive Radon measures on the fibers of $X_d$, which is our desired full $\rho$-system.

Now, we construct the isomorphism. In fact, this construction is just a localized version of the abstract equivalences established in \Cref{thm:groupoid Roe algebra} and \Cref{cor: equivariant Roe and groupoid}, applied to the specific space $X_d$. Using the equivariant localization algebra characterization $K_*^\Gamma(P_d(X)) \cong K_*\big(C^*_{L}(P_d(X))^{\Ga}\big)$, it suffices to construct a canonical $C^*$-algebra isomorphism
\[ \Phi: C^*_L\big(X_d, \ell^\infty(X,\K)^\Gamma\big)^{G(X,\Gamma)} \to C^*_{L}(P_d(X))^{\Ga}. \]
Denote the coefficient algebra by $A = \ell^\infty(X,\K)^\Gamma$ for simplicity.

An element $T$ in the groupoid Roe algebra is a continuous, $G(X,\Gamma)$-equivariant family of operators $T = \{T_\omega\}_{\omega \in \beta(X/\Gamma)}$. Since $G(X,\Gamma)$ acts transitively on the dense discrete subset $X/\Gamma$, the continuous family $T$ is completely determined by its restriction to a single fiber over any point $[x_0] \in X/\Gamma$. Under the canonical bijection $\phi_{x_0}: P_d(X) \to (X_d)_{[x_0]}$ given by $y \mapsto [x_0, y]$, the fiberwise metric on $(X_d)_{[x_0]}$ is exactly $d_{P_d(X)}$. Furthermore, the coefficient algebra evaluated at $[x_0]$ is precisely $A_{[x_0]} \cong \K$. Thus, the restriction $T_{[x_0]}$ corresponds to a bounded operator $\wt{T}$ on the standard admissible module over $P_d(X)$. To see $\wt{T}$ is $\Ga$-equivariant, notice that the isotropy group of $G(X,\Gamma)$ at $[x_0]$ is canonically isomorphic to $\Gamma$. Applying the equivariance condition to isotropy elements shows the $\Ga$-equivariance of $\wt{T}$ directly. 

Therefore, mapping $T \mapsto \wt{T}$ defines an injective $*$-homomorphism $\Phi$ on the dense subset $X/\Ga$. Since the metric, the coefficient algebra, and the $\rho$-system agree under the identification $\phi_{x_0}$, the conditions of finite propagation and local compactness for $T$ and $\wt{T}$ are mutually equivalent. 

Conversely, for any $\Gamma$-equivariant operator $\wt{T}$ on $P_d(X)$ with finite propagation, we can uniquely translate it to a $G(X,\Gamma)$-equivariant family of operators over the dense subset $X/\Gamma$. By the universal property of the uniform compactification, this uniformly bounded family extends uniquely in the strong operator topology to a continuous equivariant family over $\beta(X/\Gamma)$. This provides the inverse map, proving that $\Phi$ extends to a canonical $C^*$-isomorphism.

Applying this isomorphism pointwise to the localization algebras yields the desired $C^*$-isomorphism.
\end{proof}

\begin{Thm}\label{thm:final_equivalence}
Let $X$ be a metric space with bounded geometry, and $\Gamma$ a countable discrete group acting properly, freely, and isometrically on $X$. The equivariant coarse Baum--Connes conjecture holds for $(X,\Gamma)$ if and only if the groupoid Baum--Connes conjecture holds for $G(X,\Gamma)$ with coefficients in $\ell^\infty(X,\K)^\Gamma$.
\end{Thm}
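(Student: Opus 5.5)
The plan is to build the commutative square of \Cref{thm:intro_main} from the results already in place. The vertical isomorphisms come from \Cref{lem:K_homology_iso} on the left and \Cref{cor: equivariant Roe and groupoid} on the right. The horizontal maps are then identified through \Cref{thm:localization algebra} and \Cref{thm:groupoid Roe algebra}. Throughout, write $A=\ell^\infty(X,\K)^\Gamma$ and $\G=G(X,\Gamma)$.

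\emph{Step 1: the square at a fixed scale $d$.} By \Cref{lem:Xd_metric_cocompact}, $X_d$ is a $\G$-compact proper metric $\G$-space. \Cref{thm:groupoid Roe algebra}(2) together with \Cref{thm:localization algebra} then identifies the composite
\[ K_*(C^*_L(X_d,A)^\G)\xrightarrow{ev_*}K_*(C^*(X_d,A)^\G)\xrightarrow{\mathcal M_*}K_*(A\rtimes_r\G) \]
with $\mu^\G_A$ on $KK^\G_*(C_0(X_d),A)$. The isomorphism $\Phi$ of \Cref{lem:K_homology_iso} is defined pointwise in $t$, so it commutes with evaluation at $0$. Its $t=0$ version identifies $C^*(X_d,A)^\G$ with $C^*(P_d(X))^\Gamma$. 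This yields a square whose top row is $ev_*:K_*(C^*_L(P_d(X))^\Gamma)\to K_*(C^*(P_d(X))^\Gamma)$.

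We then compose with the isomorphism $K_*(C^*(P_d(X))^\Gamma)\cong K_*(C^*(X)^\Gamma)$ induced by the equivariant coarse equivalence $i_d$. On the groupoid side we must check that this matches $\mathcal M_*$ followed by the isomorphism of \Cref{cor: equivariant Roe and groupoid}. I would do this by restricting to the $\G$-compact piece $\overline{W_{D,d}/\Gamma}$. There the Morita bimodule can be written explicitly, and under $\Phi$ it becomes the covering isometry $\ell^2(X)\otimes\ell^2(\Gamma)\otimes H\to \H_{P_d(X)}$ built from a $\Gamma$-equivariant Borel cutoff near the vertex set $i_d(X)$.

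\emph{Step 2: naturality in $d$ and the left-hand limit.} For $d\le d'$ the inclusion $P_d(X)\to P_{d'}(X)$ induces an equivariant inclusion $X_d\to X_{d'}$ over $\beta(X/\Gamma)$. Under $\Phi$ the induced maps on localization algebras agree, so the squares form a compatible directed system. Passing to the limit gives $\lim_d K_*^\Gamma(P_d(X))$ on top, and we need this to equal $K^{top}_*(\G,A)=\lim_{Y\subseteq\underline{E}\G}KK^\G_*(C_0(Y),A)$. For this I would show that the spaces $X_d$ are cofinal among $\G$-compact subsets of $\underline{E}\G$. This splits into two parts.
\begin{enumerate}
\item Each $X_d$ maps equivariantly into $\underline{E}\G$. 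This follows from properness of the action and the universal property.
\item Every $\G$-compact proper $\G$-space $Y$ maps into some $X_d$, equivariantly and up to homotopy. The plan is to pick a cutoff function $c$ on $Y$. The support of $c$ is compact over $\beta(X/\Gamma)$, so it lies within a bounded region in the groupoid length. Over a dense point $[x]$ this lets us define a barycentric map $Y_{[x]}\to P_d(X)$ for $d$ large, by sending $y$ to $\sum_{x'} c(\gamma_{x'}^{-1}y)\,x'$. This formula is $\G$-equivariant on the dense part. It is uniformly continuous, so it extends to the uniform compactification. Two such maps are linearly homotopic inside a larger Rips complex.
\end{enumerate}
Once the spaces are cofinal, the universal property of $\underline{E}\G$ shows that the two directed systems have the same limit, compatibly with assembly.

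\emph{Step 3: conclusion and main obstacle.} With the commutative diagram in hand and both vertical maps isomorphisms, $\mu_{X,\Gamma}$ is an isomorphism if and only if $\mu_{\G}$ is, and the theorem follows. I expect Step 2(2) to be the main obstacle. Arbitrary $\G$-compact proper $\G$-spaces need not carry a fiberwise metric in any obvious way. Also, the extension of the map from the dense orbit $X/\Gamma$ to the Stone--\v{C}ech boundary requires uniform control, namely uniform continuity and a uniform bound $d$ on the diameters of the barycentric supports. The way to get this control is compactness of $\supp(c)$ over $\beta(X/\Gamma)$ together with \Cref{lem:partition}: the support of $c$ meets only finitely many clopen pieces $\overline{V_i}$, and this produces a uniform $d$ across all fibers.
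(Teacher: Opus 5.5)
Your overall architecture is the paper's: fixed-$d$ squares from \Cref{lem:K_homology_iso}, \Cref{thm:localization algebra} and \Cref{thm:groupoid Roe algebra}, then cofinality of $\{X_d\}$ via a cutoff-function barycentric map. Your attention to homotopy uniqueness and to matching $\mathcal M_*$ with the $i_d$-isomorphism is more careful than the paper's. But Step 2(2), the heart of the topological comparison, does not work as written.

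You define the barycentric map only on the fibers $Y_{[x]}$ over the discrete orbits $[x]\in X/\Gamma$. You then say it ``extends to the uniform compactification'' by uniform continuity. The universal property of $\beta_u$ extends uniformly continuous maps \emph{out of} a metric space to its compactification. Here the compactification sits on the target side ($X_d\subseteq\beta_u(Z_d)$). The domain $Y$ is an arbitrary $\G$-compact proper $\G$-space: as you note, it carries no uniform structure, and it is not a compactification of $\rho_Y^{-1}(X/\Gamma)$. So nothing produces a continuous extension to the points of $Y$ lying over $\beta(X/\Gamma)\setminus(X/\Gamma)$.

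Worse, for $\G$-compact subsets of $\underline{E}\G$ the anchor need not be open, and then $\rho_Y^{-1}(X/\Gamma)$ need not even be dense. For example, $r^{-1}(\beta(X/\Gamma)\setminus X/\Gamma)\subseteq\G$, with left multiplication and anchor $r$, is a proper $\G$-compact $\G$-space with no points at all over $X/\Gamma$. Your construction assigns it nothing.

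The missing idea is to avoid the dense part altogether. The paper writes the same formula at every point of the space,
$$F(z)=\sum_{g\in r^{-1}(\rho_Z(z))}c(g^{-1}\cdot z)\,g,$$
using the range fiber $r^{-1}(\omega)\subseteq\G$ as the vertex set of $(X_d)_\omega$ for \emph{every} $\omega\in\beta(X/\Gamma)$, boundary points included. The paper identifies $(X_d)_\omega$ with the Rips complex of the limit space $X(\omega)$. Concretely, $[x,x']\mapsto[x,i_d(x')]$ extends to an equivariant copy of $\G$ serving as the vertex set of $X_d$. With this definition:
\begin{enumerate}
\item equivariance is the reindexing $g\mapsto\gamma g$;
\item continuity comes from continuity of $c$ and of the action;
\item the uniform scale $d$ comes from compactness of $\mathcal{S}_U=\{\gamma:\gamma\cdot U\cap U\neq\emptyset\}$, which lies in a single $\overline{\Delta_d/\Gamma}$. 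Your appeal to \Cref{lem:partition} amounts to the same bound.
\end{enumerate}
Once $F$ is defined on all of $Y$ in this way, your linear homotopies inside a larger Rips complex can be written by the same global formula. The remainder of your plan then proceeds as you describe.
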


\begin{proof}
Let $Z$ be a proper, cocompact topological $G(X,\Gamma)$-space with anchor map $\rho_Z: Z \to \beta(X/\Gamma)$. We shall construct a continuous, $G(X,\Gamma)$-equivariant map $F: Z \to X_d$ for sufficiently large $d$.

Since the continuous action of $G(X,\Gamma)$ on $Z$ is proper and cocompact, there exists a relatively compact open subset $U \subseteq Z$ such that $Z = G(X,\Gamma) \cdot U$. By the standard theory of proper groupoid actions, there exists a continuous \emph{cut-off} function $c: Z \to [0, \infty)$ supported in $U$such that $\sum_{g\in r^{-1}(\rho_Z(z))}c(g^{-1}\cdot z)=1$.

Over the boundary points $\omega \in \beta_u(X/\Gamma) \setminus (X/\Gamma)$, the fiber of the coarse groupoid over $\omega$ identifies with the limit space $X(\omega)$ in the sense of J.~\v{S}pakula and R.~Willett \cite{SpaWil2017}. As we assume $X$ is uniformly discrete, the limit space is well-defined and uniformly discrete. Consequently, because we are using the uniform compactification, the limit fiber $(X_d)_\omega$ is precisely the standard Rips complex $P_d(X(\omega))$ of the corresponding limit space.

We define the map $F: Z \to X_d$ by sending $z \in Z$ to the formal convex combination:
\[ F(z) := \sum_{g \in r^{-1}(\rho_Z(z))} c(g^{-1} \cdot z) g.\]
We first verify that the image of $F$ is indeed contained in $X_d$ for some $d \ge 0$. If $g_1, g_2 \in r^{-1}(\rho_Z(z))$ yield non-zero coefficients in $F(z)$, then $g_1^{-1} \cdot z \in U$ and $g_2^{-1} \cdot z \in U$. This implies that the groupoid element $g_1^{-1} g_2$ maps the relatively compact set $U$ into itself. Because the action is proper, the subset 
\[ \S_U = \{ \gamma \in G(X,\Gamma) \mid \gamma \cdot U \cap U \neq \emptyset \} \]
is compact in $G(X,\Gamma)$. By the definition of the coarse groupoid, $\S_U$ is contained in $\overline{\Delta_d/\Gamma}$ for some $d \ge 0$. This ensures that the distance between any two vertices $g_1, g_2$ with non-zero coefficients is uniformly bounded by $d$. Hence, the non-zero terms in $F(z)$ precisely span a simplex in the Rips complex structure of $X_d$.

The $G(X,\Gamma)$-equivariance of $F$ follows by a direct computation. Let $\gamma \in G(X,\Gamma)$ act on $z \in Z$, where $s(\gamma) = \rho_Z(z)$. Using the bijection $ g \mapsto h=\gamma g$ from $r^{-1}(\rho_Z(z))$ to $r^{-1}(r(\gamma))$, we have:
\[ F(\gamma \cdot z) = \sum_{h \in r^{-1}(r(\gamma))} c(h^{-1} \cdot (\gamma \cdot z)) h = \sum_{g \in r^{-1}(\rho_Z(z))} c(g^{-1} \cdot z) \gamma g = \gamma \cdot \Big( \sum_{g} c(g^{-1} \cdot z) g \Big). \]
This shows that $F$ is strictly equivariant. Since the cutoff function $c(z)$ is globally continuous on $Z$, it is direct to see that $F$ is well-defined and continuous.

This construction shows that any proper cocompact $G(X,\Gamma)$-space admits a continuous equivariant map into some $X_d$. Therefore, the direct limit over $\{X_d\}_{d \ge 0}$ computes the $K$-homology of the classifying space of $G(X,\Ga)$. Combining this with the canonical isomorphisms in \Cref{lem:K_homology_iso}, \Cref{thm:groupoid Roe algebra}, and \Cref{cor: equivariant Roe and groupoid}, the assembly maps for the equivariant coarse Baum--Connes conjecture and the groupoid Baum--Connes conjecture strictly commute. This completes the proof.
\end{proof}

\section{Applications to higher index theory}

In this section, we shall use our groupoid model to study higher index problems.

\subsection{A-T-menability of the equivariant coarse groupoid}\label{sec: a-T-manable}

In this subsection, we study the analytic properties of the equivariant coarse groupoid. In particular, we establish an equivalence between equivariant coarse embeddability into Hilbert space and the a-T-menability of the associated equivariant coarse groupoid.

We first recall the definition of a-T-menability for \'etale groupoids $\G$. A continuous function $\psi: \G \to \IR$ on a groupoid $\G$ is called \emph{conditionally negative definite} if $\psi(g) = \psi(g^{-1})$, $\psi(u) = 0$ for all $u \in \G^{(0)}$, and for any $u \in \G^{(0)}$, any finite set of elements $g_1, \dots, g_n \in s^{-1}(u)$, and any real numbers $c_1, \dots, c_n$ such that $\sum_{i=1}^n c_i = 0$, we have
$$ \sum_{i,j=1}^n c_i c_j \psi(g_i g_j^{-1}) \le 0. $$
A groupoid admitting a proper conditionally negative definite function is said to be \emph{a-T-menable}.

Now we are ready to prove the main theorem of this section.

\begin{Thm}\label{thm: a-T-menable}
Let $X$ be a bounded geometry metric space equipped with a free, proper, and isometric right $\Ga$-action. The space $X$ admits an equivariant coarse embedding into Hilbert space if and only if its equivariant coarse groupoid $G(X, \Ga)$ is a-T-menable.
\end{Thm}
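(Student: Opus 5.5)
We follow the strategy of Skandalis--Tu--Yu for the coarse groupoid $G(X)$. We pass between $\Gamma$-invariant kernels on $X\times X$ and functions on the dense open subset $(X\times X)/\Gamma\subseteq G(X,\Gamma)$. Recall that an equivariant coarse embedding is a map $f:X\to H$ with $\rho_-(d(x,y))\le \|f(x)-f(y)\|\le\rho_+(d(x,y))$, where $\rho_\pm$ are proper non-decreasing functions. It is equivariant in the sense that there is an affine isometric action $\gamma\mapsto A_\gamma$ of $\Gamma$ on $H$ with $f(x\gamma)=A_\gamma f(x)$. The kernel $k(x,y)=\|f(x)-f(y)\|^2$ is then $\Gamma$-invariant under the diagonal action, conditionally negative definite, and satisfies $\rho_-(d(x,y))^2\le k(x,y)\le\rho_+(d(x,y))^2$.

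For ($\Rightarrow$), we define $\psi_0([x,y])=k(x,y)$ on $(X\times X)/\Gamma$; this is well defined by invariance. On each $\Delta_R/\Gamma$ it is bounded by $\rho_+(R)^2$, so it extends uniquely and continuously to the clopen set $\overline{\Delta_R/\Gamma}$. These extensions glue to a continuous $\psi$ on $G(X,\Gamma)$. Symmetry, vanishing on units, and conditional negative definiteness hold on the dense set $(X\times X)/\Gamma$. Indeed, for $u=[x]$ the source fibre $s^{-1}([x])$ is $\{[y,x]\}$, and $[y_i,x][y_j,x]^{-1}=[y_i,y_j]$, so the inequality is exactly that of $k$. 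Each of these conditions is a closed condition on finitely many elements in a common source fibre. We extend it to boundary units using the local sections of Theorem~\ref{thm:etale}: a finite family $g_1,\dots,g_n\in s^{-1}(\omega)$ is approximated by $g_i(u)$ with $u\in X/\Gamma$ converging to $\omega$, and continuity of $\psi$ and of multiplication passes the inequality to the limit. For properness, $\psi^{-1}([0,M])$ is contained in the closure of $\{[x,y]: \rho_-(d(x,y))^2\le M\}\subseteq\Delta_R/\Gamma$ for some $R$, which is compact. Here one must check that a boundary point of $\overline{\Delta_{R'}/\Gamma}$ with $\psi\le M$ already lies in $\overline{\Delta_R/\Gamma}$. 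This holds because $\Delta_{R'}/\Gamma\setminus\Delta_R/\Gamma$ has closure disjoint from $\overline{\Delta_R/\Gamma}$ and on it $\psi\ge\rho_-(R)^2>M$.

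For ($\Leftarrow$), let $\psi$ be a proper continuous conditionally negative definite function on $G(X,\Gamma)$. Set $k(x,y)=\psi([x,y])$; this kernel is $\Gamma$-invariant and conditionally negative definite on $X$, by the same source-fibre computation. By the standard GNS-type construction (Schoenberg), there are a real Hilbert space $H$ and a map $f:X\to H$ with $\|f(x)-f(y)\|^2=k(x,y)$. Concretely, $H$ is the completion of finitely supported functions $c$ on $X$ with $\sum c=0$ under the form $-\tfrac12\sum c_ic_jk(x_i,x_j)$, and $f(x)=\delta_x-\delta_{x_0}$. The $\Gamma$-invariance of $k$ makes right translation of $X$ induce an affine isometric action on $H$ for which $f$ is equivariant; the linear part is translation of $c$, and the cocycle is $\delta_{x_0\gamma}-\delta_{x_0}$.

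The remaining step, which is the main obstacle, is to show that $f$ is a coarse embedding. The upper bound is easy: $\psi$ is continuous on the compact set $\overline{\Delta_R/\Gamma}$, so $\rho_+(R)=\sup_{\Delta_R/\Gamma}\sqrt{\psi}$ is finite. For the lower bound we need $\inf\{k(x,y):d(x,y)\ge R\}\to\infty$. Suppose not. Then there exist $M$ and pairs $(x_n,y_n)$ with $d(x_n,y_n)\to\infty$ and $k(x_n,y_n)\le M$. The points $[x_n,y_n]$ then lie in the compact set $\psi^{-1}([0,M])$, which by the étale structure is covered by finitely many $\overline{\Delta_R/\Gamma}$ and hence sits inside a single $\overline{\Delta_R/\Gamma}$. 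Since $\Delta_R/\Gamma$ is clopen in $(X\times X)/\Gamma$ and closures in the Stone--\v{C}ech compactification of disjoint sets are disjoint, $[x_n,y_n]\in\overline{\Delta_R/\Gamma}\cap (X\times X)/\Gamma=\Delta_R/\Gamma$. This gives $d(x_n,y_n)\le R$, a contradiction. The delicate point to state carefully is that compact subsets of $G(X,\Gamma)$ lie in some $\overline{\Delta_R/\Gamma}$; this follows because the sets $\overline{\Delta_R/\Gamma}$ form an increasing open cover.
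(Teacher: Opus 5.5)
Your proposal is correct and follows the same route as the paper. In both directions you pass between $\Gamma$-invariant conditionally negative definite kernels on $X$ and functions on the dense discrete part $(X\times X)/\Gamma$ of $G(X,\Gamma)$, extend using the Stone--\v{C}ech property, and invoke Schoenberg's theorem, while also supplying details the paper only asserts: the passage of conditional negative definiteness to boundary units via local bisections, properness of $\psi$ in the forward direction, the explicit affine isometric action making $f$ equivariant, and the lower coarse bound via compact sets lying in a single $\overline{\Delta_R/\Gamma}$.
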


\begin{proof}
$(\Longrightarrow)$ Suppose $f: X \to \mathcal{H}$ is an equivariant coarse embedding into Hilbert space equipped with a right affine isometric action of $\Ga$. Let $U_\gamma \in \mathcal{O}(\mathcal{H})$ and $v_\gamma \in \mathcal{H}$ denote the linear and translation parts of the affine action, respectively.

We define a function on the discrete subset $(X \times X)/\Ga$ by $\psi_0([x, y]) = \|f(x) - f(y)\|^2$. The $\Ga$-invariance guarantees that $\psi_0$ is well-defined. Since $f$ is a coarse embedding, $\psi_0$ is bounded on each $\Delta_R/\Ga$, and extends uniquely to a proper continuous function $\psi: G(X, \Ga) \to [0, \infty)$ on the Stone-\v{C}ech compactification. 

For the conditional negative definiteness, consider a unit $[x]$ and elements $g_i = [y_i, x]$. For any $c_1, \dots, c_n \in \IR$ with $\sum c_i = 0$, we have
\begin{align*}
    \sum_{i,j=1}^n c_i c_j \psi(g_i g_j^{-1}) &= \sum_{i,j=1}^n c_i c_j \|f(y_i) - f(y_j)\|^2\le 0.
\end{align*}
The inequality directly follows from Schoenberg's theorem, see, for instance, \cite[Theorem C.2.3]{KazhdanT}. The inequality extends by continuity from the dense discrete subset $(X \times X)/\Ga$ to the entire compactification $G(X, \Ga)$. 

Therefore, $\psi$ is a proper, continuous, conditionally negative definite function on $G(X, \Ga)$, proving that the groupoid is a-T-menable.

$(\Longleftarrow)$ Conversely, assume $G(X, \Ga)$ is a-T-menable, so there exists a proper, continuous, conditionally negative definite function $\psi: G(X, \Ga) \to [0, \infty)$. We restrict $\psi$ to the dense discrete subgroupoid $(X \times X)/\Ga$ and define a symmetric kernel $k: X \times X \to [0, \infty)$ by $k(x, y) = \psi([x, y])$.

By definition, $k(x\gamma, y\gamma) = k(x, y)$ for all $\gamma \in \Ga$, and $k$ is a conditionally negative definite kernel on $X$. By the standard GNS construction (or Schoenberg's theorem, see \cite[Theorem C.2.3]{KazhdanT}), there exists a real Hilbert space $\mathcal{H}$ and a map $f: X \to \mathcal{H}$ such that $\|f(x) - f(y)\|^2 = k(x, y)$. The $\Ga$-invariance of the kernel implies that the map $f(x) \cdot \gamma := f(x\gamma)$ preserves distances in $f(X)$. By construction, $f$ is strictly $\Ga$-equivariant.

Finally, since $\psi$ is a continuous and proper function on $G(X, \Ga)$, it is straightforward to verify that $f$ is a coarse embedding. This completes the proof.
\end{proof}

As a direct consequence of \Cref{thm: a-T-menable} and the profound work of J.~L.~Tu \cite{Tu1999}, we obtain the following geometric application, which recovers a main result of B.~Fu and X.~Wang \cite{FW2016} from a purely groupoid perspective.

\begin{Cor}[\cite{FW2016}]\label{cor:BC_conjecture}
Let $X$ be a bounded geometry metric space equipped with a free, proper, and isometric right $\Ga$-action. If $X$ admits an equivariant coarse embedding into Hilbert space, then its equivariant coarse groupoid $G(X, \Ga)$ satisfies the Baum--Connes conjecture with coefficients. Consequently, the equivariant coarse Baum--Connes conjecture holds for $X$.\qed
\end{Cor}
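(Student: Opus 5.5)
The plan is to chain three results already in hand: \Cref{thm: a-T-menable}, Tu's theorem \cite{Tu1999}, and \Cref{thm:final_equivalence}.

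First, assume $X$ admits an equivariant coarse embedding into Hilbert space. \Cref{thm: a-T-menable} then gives a proper, continuous, conditionally negative definite function $\psi$ on $G(X,\Ga)$, so $G(X,\Ga)$ is a-T-menable.

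Second, before invoking Tu's theorem, we check its standing hypotheses. Tu's result gives the Baum--Connes conjecture with arbitrary coefficients for a-T-menable, locally compact, Hausdorff, second countable groupoids with Haar system. Here lies the main obstacle: $G(X,\Ga)$ is étale by \Cref{thm:etale}, so the counting measures give a Haar system, and it is locally compact Hausdorff. However, its unit space $\beta(X/\Ga)$ is not second countable (nor metrizable) when $X/\Ga$ is infinite.

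I would handle this as Skandalis--Tu--Yu do for the ordinary coarse groupoid \cite{STY2002}. Since $A=\ell^\infty(X,\K)^\Ga$ is itself non-separable, a direct application is unavailable. Instead, write both sides of the assembly map as inductive limits over separable $G(X,\Ga)$-invariant subalgebras $C\subseteq \ell^\infty(X/\Ga)$. Each such $C$ is invariant under the countably many partial translations and contains the matrix coefficients of $\psi$. Its spectrum $Y_C$ carries a second countable étale groupoid $\G_C$, a quotient of $G(X,\Ga)$ built from the same partial translations. The function $\psi$ descends to $\G_C$, and $\G_C$ remains a-T-menable.

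Next, apply Tu's theorem to each $\G_C$ with the corresponding separable coefficient subalgebras of $A$. Both $K^{top}$ and $K_*$ of reduced crossed products commute with these inductive limits, and continuity of $K$-theory handles the right-hand side. It follows that the assembly map $\mu_{G(X,\Ga)}$ with coefficients in $A$, and indeed with any coefficients, is an isomorphism.

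Finally, \Cref{thm:final_equivalence} (equivalently the commuting square in \Cref{thm:intro_main}) identifies $\mu_{G(X,\Ga)}$ with coefficients in $\ell^\infty(X,\K)^\Ga$ with $\mu_{X,\Ga}$. Hence the equivariant coarse Baum--Connes conjecture holds for $(X,\Ga)$.
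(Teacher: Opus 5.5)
Your proof is the paper's argument: the paper obtains the corollary exactly by chaining \Cref{thm: a-T-menable}, Tu's theorem \cite{Tu1999}, and \Cref{thm:final_equivalence}. The paper applies Tu's theorem directly without comment, so your reduction to second countable quotient groupoids $\G_C$ over separable invariant subalgebras $C\subseteq\ell^\infty(X/\Ga)$ (in the style of \cite{STY2002}) is extra care about a hypothesis the paper passes over, not a different route; the continuity of $K^{top}$ under that limit is the part of it that would still need to be written out in full.
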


The fact that equivariant coarse embeddability into Hilbert space implies the equivariant coarse Baum--Connes conjecture was originally proved by B.~Fu and X.~Wang \cite{FW2016} using the machinery of equivariant localization algebras and Dirac-dual-Dirac methods.  As clarified in \cite{GWWY2024}, their result is naturally formulated under the assumption of equivariant bounded geometry, which is equivalent to the assumption that the action is equivariantly coarsely equivalent to a free action. Thus, the free action setting considered in the present paper is precisely the natural framework of the previous result, and our proof provides a new groupoid-equivariant approach to this case.

\subsection{Equivariant coarse Novikov conjecture}

In this section, we apply the groupoid framework to provide a geometric proof of the equivariant coarse Novikov conjecture for a proper metric space $X$ equipped with a proper, free, and isometric right $\Gamma$-action. Crucially, we only assume that $X$ admits a coarse embedding into Hilbert space (which need not be $\Gamma$-equivariant). The main theorem of this section is stated as follows.

\begin{Thm}\label{thm: ECNC for CE spaces}
Let $X$ be a metric space with bounded geometry, $\Ga$ a countable discrete group acting properly and isometrically on $X$. If $X$ admits a coarse embedding into Hilbert space and the action $X\cal\Ga$ has equivariant bounded geometry, then the Baum--Connes assembly map for $G(X,\Ga)$ with any coefficient $A$ is an injection.

As a result, the equivariant coarse Novikov conjecture holds for $X\cal\Ga$.
\end{Thm}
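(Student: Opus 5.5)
First, by equivariant bounded geometry and \cite{GWWY2024}, I replace $X\cal\Ga$ by an equivariantly coarsely equivalent free model $Y\cal\Ga$. Coarse embeddability into Hilbert space is a coarse invariant, so $Y$ still coarsely embeds. By \Cref{thm: morita coarse}, $G(X,\Ga)$ and $G(Y,\Ga)$ are Morita equivalent, and injectivity of the Baum--Connes map with arbitrary coefficients is preserved under Morita equivalence of groupoids. So I may assume the action is free, and then the final sentence of the theorem follows from the first statement applied to $A=\ell^\infty(X,\K)^\Ga$ together with \Cref{thm:final_equivalence}.

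Next, I establish the structural isomorphism $G(X)\cong G(X,\Ga)\ltimes \beta X$. Here $G(X,\Ga)$ acts on $\beta X$ through the anchor $q:\beta X\to\beta(X/\Ga)$ extending the quotient map. On the dense part, $[x,y]$ acts on $y'\in X$ with $[y']=[y]$ by sending $y'=y\gamma$ to $x\gamma$. The map $(g,y')\mapsto(x\gamma,y')$ identifies $(\Delta_R/\Ga)\times_{X/\Ga}X$ with $\Delta_R$ bijectively; this uses freeness of the action. Since $s_0$ has fibres of size at most $N_R$ on $\Delta_R/\Ga$, \Cref{lem:key lemma} upgrades this bijection to a homeomorphism $\overline{\Delta_R/\Ga}\times_{\beta(X/\Ga)}\beta X\cong\overline{\Delta_R}$, and these homeomorphisms glue into a groupoid isomorphism.

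By \cite{STY2002}, $G(X)$ is a-T-menable. Tu's theorem \cite{Tu1999} then gives that $G(X)$ satisfies Baum--Connes with arbitrary coefficients. For a transformation groupoid, Baum--Connes for $G(X,\Ga)\ltimes\beta X$ with coefficients $B$ is equivalent to Baum--Connes for $G(X,\Ga)$ with coefficients $C(\beta X)\otimes_{C(\beta(X/\Ga))}B$. So the assembly map of $G(X,\Ga)$ is an isomorphism for every coefficient algebra of the form $C(\beta X)\otimes_{C(\beta(X/\Ga))}A$.

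The remaining and main step is to descend from $C(\beta X)\otimes A$ back to $A$. I would follow Higson's argument \cite{Higson2000}. Injectivity follows if the canonical inclusion $\iota:C(\beta(X/\Ga))\to C(\beta X)$ admits, in $KK^{G(X,\Ga)}$ restricted to proper $G(X,\Ga)$-spaces $X_d$, a left inverse. Concretely, I need a $G(X,\Ga)$-equivariant family of probability measures on the fibres of $q$ over $X_d$, i.e.\ a "$\Ga$-invariant averaging" $C(\beta X)\to C_b(X_d)$ that is equivariant.

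Over a point $[x,w]\in Z_d$, the fibre of $q$ is the orbit $x\Ga$. Since $x\Ga$ is infinite and no invariant probability measure exists globally, I would instead use measures localized near $w$. I would take the uniform measure on $\{x\gamma : d(x\gamma,w)\le D\}$, or a partition-of-unity weighted version of it so that it depends continuously on $w$. These measures are equivariant because the action is isometric, and they extend to the compactification by \Cref{lem:key lemma}-type arguments. This yields a unital equivariant completely positive map $C(\beta X)\to C_0(X_d)\otimes C(\beta(X/\Ga))$-type multipliers. After that, a homotopy or Mayer--Vietoris argument over a cover of $X_d$ by sets where the support is constant shows that composing with $\iota$ gives the identity on $KK^{G(X,\Ga)}(C_0(X_d),A)$. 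Hence $K^{top}_*(G(X,\Ga),A)\to K^{top}_*(G(X,\Ga),C(\beta X)\otimes A)$ is injective. Naturality of the assembly map, combined with the isomorphism from Tu's theorem, then gives injectivity of $\mu_{G(X,\Ga)}$.

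I expect the hard part to be making these measures continuous on the Stone--Čech boundary and checking that the convex homotopy connecting them to the Dirac-type measures lives in $KK^{G(X,\Ga)}$.
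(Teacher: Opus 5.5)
Your reduction to a free action, the isomorphism $G(X)\cong G(X,\Ga)\ltimes\beta X$, and the passage to Baum--Connes for $G(X,\Ga)$ with coefficients $C(\beta X)\otimes_{C(\beta(X/\Ga))}A$ all match the paper; that passage uses \cite{STY2002}, Tu's theorem and Shapiro's lemma. The gap is the descent step. You want a left inverse to $\iota_*\colon KK^{G(X,\Ga)}(C_0(X_d),A)\to KK^{G(X,\Ga)}(C_0(X_d),C(\beta X)\otimes_{C(\beta(X/\Ga))}A)$, but $\iota_*$ is in general not injective, even in the limit over $d$.

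Here is a counterexample satisfying all the hypotheses. Take a nontrivial finite group $\Ga$ acting on $X=\Ga$ by translation. Then $G(X,\Ga)=\Ga$ and $\beta X=\Ga$. For $d\ge 1$, $P_d(X)$ is a simplex that contracts $\Ga$-equivariantly to its barycentre. With $A=\IC$ the map becomes
$R(\Ga)=KK^\Ga(\IC,\IC)\to KK^\Ga(\IC,C(\Ga))\cong K_0(C(\Ga)\rtimes\Ga)\cong\mathbb{Z}$, $[V]\mapsto\dim V$, which has nonzero kernel. Your equivariant family of measures does exist here (the uniform measure on $\Ga$), so such a family cannot yield a left inverse.

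The underlying reason is that finite isotropy groups act freely on the fibres of $p\colon\beta X\to\beta(X/\Ga)$; over $[x]$ the fibre is $\overline{x\Ga}\cong\beta\Ga$. So there are no fixed points to retract onto. In addition, an averaging u.c.p.\ map $C(\beta X)\to C_b(X_d)$ is not a $*$-homomorphism and induces nothing on $KK$. The convex homotopy you anticipate also has no room to live inside $\beta X$.

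The missing idea, which is Higson's actual device and what the paper does, is to make the measures the points of a new coefficient space rather than a tool for inverting $\iota$. Let $\M\to\beta(X/\Ga)$ be the bundle of probability measures on the fibres of $p$, with $G(X,\Ga)$ acting by push-forward.

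First, $G(X,\Ga)\ltimes\M$ is still a-T-menable. Take a proper conditionally negative definite $\psi$ on $G(X,\Ga)\ltimes\beta X\cong G(X)$ and set $\widetilde\psi(g,\mu)=\int\psi(g,x)\,d\mu(x)$. It is proper because $\widetilde\psi(g,\mu)\le C$ forces $\psi(g,x)\le C$ for some $x$, so $g$ lies in a fixed compact set. Hence Baum--Connes holds with coefficients $A_\M$.

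Second, $A\to A_\M$ induces an \emph{isomorphism} on $K^{top}_*$. Slice a proper cocompact $G(X,\Ga)$-space into pieces induced from finite groups $H$ and apply Mayer--Vietoris. On each piece, contract the convex fibres of $\M$ linearly and $H$-equivariantly onto an $H$-averaged section. Such a section exists globally: take Dirac measures on a fundamental domain and extend to $\beta(X/\Ga)$.

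Injectivity of $\mu$ then follows from the commutative square for $A\to A_\M$.
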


Without loss of generality, we shall assume the action $X\cal\Ga$ is free.

\begin{Lem}\label{lem:groupoid_isomorphism}
Let $p: \beta X \to \beta(X/\Gamma)$ be the continuous extension of the quotient map $p: X\to X/\Ga$. The space $\beta X$ admits a continuous, proper left action by the equivariant coarse groupoid $G(X, \Gamma)$. Furthermore, we have a topological groupoid isomorphism:
\[ G(X) \cong G(X, \Gamma) \ltimes \beta X. \]
\end{Lem}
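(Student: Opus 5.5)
Our plan is to define everything on the dense discrete level, where it is a purely combinatorial identity, and then extend to the Stone--\v{C}ech compactifications using \Cref{lem:key lemma}, exactly as was done for the composition in $G(X,\Ga)$.

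\emph{Step 1: the action.} On the discrete level, $X$ sits over $X/\Ga$ via the quotient map $p$. For $[x,y]\in(X\times X)/\Ga$ and $z\in X$ with $[y]=[z]$, there is a unique $\gamma\in\Ga$ with $z=y\gamma$, by freeness. We set
\[ [x,y]\cdot z := x\gamma. \]
This is independent of the representative, because $(x\gamma',y\gamma')$ yields $z=y\gamma'(\gamma'^{-1}\gamma)$ and hence the same point $x\gamma$. The action satisfies $p([x,y]\cdot z)=[x]=r([x,y])$ and is associative. For fixed $R$, consider the map
\[ \Delta_R/\Ga\times_{X/\Ga}X\to X. \]
The projection $s_0:\Delta_R/\Ga\to X/\Ga$ has fibres of cardinality at most $N_R$, as shown in the proof of \Cref{lem:partition}. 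So \Cref{lem:key lemma} identifies $\beta(\Delta_R/\Ga\times_{X/\Ga}X)$ with $\overline{\Delta_R/\Ga}\times_{\beta(X/\Ga)}\beta X$, and the map extends continuously into $\beta X$. Gluing over $R$ gives a continuous action, and the action axioms pass to the extension by density. Properness follows because the extended map $(g,\omega)\mapsto(g\omega,\omega)$ restricted to $\overline{\Delta_R/\Ga}\times_{\beta(X/\Ga)}\beta X$ is a map from a compact space. Any compact subset of $\beta X\times\beta X$ has a preimage contained in finitely many such clopen pieces, so the preimage is compact.

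\emph{Step 2: the isomorphism.} The transformation groupoid $G(X,\Ga)\ltimes\beta X$ has arrows $\{(g,\omega): s(g)=p(\omega)\}$ and unit space $\beta X$. On the discrete level we define
\[ \Theta:\ X\times X\to (X\times X)/\Ga\times_{X/\Ga}X,\qquad (x,z)\mapsto([x,z],z). \]
It has inverse $([x,y],z)\mapsto([x,y]\cdot z,z)=(x\gamma,z)$. One checks directly that $\Theta$ is a groupoid isomorphism between the pair groupoid $X\times X$ and the discrete transformation groupoid: composition matches, and range and source correspond to $x$ and $z$. Moreover, $\Theta$ maps $\Delta_R$ bijectively onto $\Delta_R/\Ga\times_{X/\Ga}X$, since $d(x\gamma,z)=d(x,y)$.

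\emph{Step 3: extension to the closures.} This is where the main topological care lies. On the source side, $\overline{\Delta_R}\subseteq\beta(X\times X)$ is $\beta(\Delta_R)$, since $\Delta_R$ is clopen. On the target side, $\beta(\Delta_R/\Ga\times_{X/\Ga}X)\cong\overline{\Delta_R/\Ga}\times_{\beta(X/\Ga)}\beta X$ by \Cref{lem:key lemma}. Hence $\Theta|_{\Delta_R}$, being a bijection of discrete sets, extends to a homeomorphism between these compact clopen pieces. These homeomorphisms are compatible as $R$ increases, so they glue to a homeomorphism
\[ G(X)=\bigcup_R\overline{\Delta_R}\longrightarrow G(X,\Ga)\ltimes\beta X. \]
The structure maps (composition, inverse, range and source) agree on a dense subset. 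Composition is continuous on both sides on each compact piece of composable pairs, again by \Cref{lem:key lemma}, so the identities persist to the closure. The point needing verification is that the composable pairs of each side form the closure of the discrete composable pairs. This is precisely what \Cref{lem:key lemma} provides, using the uniform finiteness coming from bounded geometry.
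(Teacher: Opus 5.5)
Your construction of the action and of the isomorphism is essentially the paper's. The paper uses the same action, written $[y,x]\cdot x\gamma=y\gamma$, and extends via \Cref{lem:key lemma} the bijection $\Phi([y,x],x\gamma)=(y\gamma,x\gamma)$ from $\Delta_R/\Gamma\times_{X/\Gamma}X$ onto $\Delta_R$, which is your $\Theta^{-1}$. Your handling of the density of composable pairs is, if anything, more careful.

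The genuine problem is the properness step. You claim that any compact subset of $\beta X\times\beta X$ has preimage contained in finitely many clopen pieces; this is false. The set $\beta X\times\beta X$ is itself compact, and its preimage under $(g,\omega)\mapsto(g\cdot\omega,\omega)$ is all of $G(X,\Gamma)\times_{\beta(X/\Gamma)}\beta X$. By the isomorphism you prove, this is homeomorphic to $G(X)=\bigcup_R\overline{\Delta_R}$. That is an increasing union of clopen compact sets, so it is compact only if it equals a single $\overline{\Delta_R}$. Since points of $X\times X$ are isolated, $\overline{\Delta_R}\cap(X\times X)=\Delta_R$, so this happens only if $\diam(X)\le R$. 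A map into a compact space can be proper only if its domain is compact. Hence the action is \emph{not} proper whenever $X$ is unbounded, which always happens when $\Gamma$ is infinite (orbits are infinite, while bounded subsets of $X$ are finite). Concretely, for $X=\Gamma=\mathbb{Z}$ you get the translation action of $\mathbb{Z}$ on $\beta\mathbb{Z}$, and an infinite discrete group never acts properly on a nonempty compact space. You seem to have conflated compact subsets of $\beta X\times\beta X$ with compact subsets of $G(X,\Gamma)$; the latter are indeed contained in finitely many pieces $\overline{\Delta_R/\Gamma}$.

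So this step cannot be repaired: the word ``proper'' in the statement is incorrect for the paper's own definition of a proper action. The paper's proof does not verify it either; it only constructs the continuous action and the isomorphism. The only automatic properness here is that of the anchor $p\colon\beta X\to\beta(X/\Gamma)$, a continuous map between compact Hausdorff spaces, and that is a different condition. If you drop the properness claim, the rest of your argument is correct as written: it gives a continuous action and the groupoid isomorphism $G(X)\cong G(X,\Gamma)\ltimes\beta X$.
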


\begin{proof}
We first define a left action of $G(X, \Gamma)$ on $\beta X$. On the dense subset $X$, for any $g = [y, x] \in \Delta_R / \Gamma$ and $z \in X$ such that $p(z) = s(g)$, we have $[z] = [x]$. Since $\Gamma$ acts freely, there is a unique $\gamma \in \Gamma$ such that $z = x \gamma$. We define the left action as $g \cdot z = y \gamma$. 

Choose another representative $(y \delta, x \delta)$ for $g$. Then we can write $z = (x \delta)(\delta^{-1} \gamma)$, which yields the action $(y \delta)(\delta^{-1} \gamma) = y \gamma$. This shows that this action is well-defined. Since the right $\Gamma$-action on $X$ is isometric, $d_X(g \cdot z, z) = d_X(y \gamma, x \gamma) = d_X(y, x) \le R$. Thus, the action transfers points with a uniformly bounded amount. By the universal property of the Stone-\v{C}ech compactification, this action extends to a continuous groupoid action of $G(X, \Gamma)$ on $\beta X$ by \Cref{lem:key lemma}.

We form the transformation groupoid $G(X, \Gamma) \ltimes \beta X$. On the discrete level, the map $\Phi([y, x], x \gamma) = (y \gamma, x \gamma)$ is a well-defined bijection from $\Delta_R/\Gamma \times_{X/\Gamma} X$ to $\Delta_R$. Extending this to the compactifications yields the desired isomorphism
$$\Phi: G(X, \Gamma) \ltimes \beta X \to G(X),$$
as desired.
\end{proof}

For any $G(X,\Ga)$-$C^*$-algebra $A$, we shall denote $A_{\beta X}=C(\beta X)\ox_{C(\beta(X/\Gamma))}A$ for simplicity.

\begin{Cor}\label{lem:descent_bc}
Assume that $X$ admits a coarse embedding into a Hilbert space. Then the equivariant coarse groupoid $G(X, \Gamma)$ satisfies the groupoid Baum--Connes conjecture with coefficients in $A_{\beta X}$, i.e., the assembly map
\[ \mu_{C(\beta X)}: K_*^{top}(G(X, \Gamma), A_{\beta X}) \to K_*(A_{\beta X} \rtimes_r G(X, \Gamma)) \]
is an isomorphism.
\end{Cor}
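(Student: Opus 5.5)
The plan is to reduce the statement to the Baum--Connes conjecture for the coarse groupoid $G(X)$ with coefficients, via \Cref{lem:groupoid_isomorphism} and a Green--Julg/induction type identification for transformation groupoids.

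\textbf{Step 1.} By \Cref{lem:groupoid_isomorphism}, $G(X)\cong G(X,\Gamma)\ltimes\beta X$. The $G(X,\Gamma)$-algebra $A_{\beta X}=C(\beta X)\otimes_{C(\beta(X/\Gamma))}A$ is a $C(\beta X)$-algebra whose $G(X,\Gamma)$-action covers the action on $\beta X$. It therefore defines, in the standard way, a $G(X)$-algebra $B$ with the same underlying $C^*$-algebra: the arrow $(g,\omega)$ acts by $\alpha_g$ on the fibre $(A_{\beta X})_\omega\cong A_{p(\omega)}$. The first claim is an identification of reduced crossed products,
\[ A_{\beta X}\rtimes_r G(X,\Gamma)\cong B\rtimes_r G(X). \]
We would check this on $C_c$-level kernels, using that $G(X,\Gamma)\ltimes\beta X$ has arrows $(g,\omega)$ with $s(g)=p(\omega)$, and compare the regular representations fibre by fibre over $\beta X$.

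\textbf{Step 2.} Identify the topological sides. The aim is $K^{top}_*(G(X,\Gamma),A_{\beta X})\cong K^{top}_*(G(X),B)$ compatibly with assembly. Concretely, take the models $X_d$ from \Cref{lem:Xd_construction}. The fibered product $X_d\times_{\beta(X/\Gamma)}\beta X$ is then a proper cocompact $G(X)$-space, and it should play the role of the Rips-complex model for $G(X)$, namely the STY classifying space built from $P_d(X)$. We then use the standard fact, due to Tu, that for a $G$-space $Y$ there is a natural isomorphism
\[ KK^{G\ltimes Y}(C_0(Z\times_{G^{(0)}}Y),B)\cong KK^{G}(C_0(Z),B), \]
valid when $B$ is a $C_0(Y)$-algebra. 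This isomorphism intertwines descent and assembly, so the two assembly maps become identified under Step 1. In particular, $\underline{E}(G\ltimes Y)=\underline{E}G\times_{G^{(0)}}Y$.

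\textbf{Step 3.} Since $X$ coarsely embeds into Hilbert space, $G(X)$ is a-T-menable by \cite{STY2002}. Tu's theorem \cite{Tu1999} then gives that $G(X)$ satisfies Baum--Connes with arbitrary coefficients, in particular with coefficients in $B$. Transporting this through Steps 1--2 yields that $\mu_{C(\beta X)}$ is an isomorphism.

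The main obstacle is Step 2: justifying the compatibility of the two assembly maps under the transformation-groupoid reduction. Our first attempt would be to verify it in the localization-algebra picture of \Cref{thm:localization algebra}. There, both the Roe algebras and the localization algebras over $X_d$ with coefficients $A_{\beta X}$ are literally equal to those over $X_d\times_{\beta(X/\Gamma)}\beta X$ with coefficients $B$. The reason is that a $G(X,\Gamma)$-equivariant operator on a $C(\beta X)$-linear module is the same as a $G(X)$-equivariant one. Once this is established, evaluation at zero commutes with the identification trivially.
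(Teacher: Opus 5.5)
Your proposal is correct and follows essentially the same route as the paper: use the isomorphism $G(X)\cong G(X,\Gamma)\ltimes\beta X$ of \Cref{lem:groupoid_isomorphism}, then a Shapiro-type lemma that identifies the assembly map for $G(X,\Gamma)$ with coefficients in $A_{\beta X}$ with that of the transformation groupoid (with $A_{\beta X}$ regarded as a $G(X)$-algebra), and finally a-T-menability of $G(X)$ from \cite{STY2002} together with Tu's theorem. The difference is only in the level of detail: the paper cites Le Gall's Shapiro lemma \cite{LeGall1999} for the identification you spell out in Steps 1--2 (the crossed products, the $KK$-groups, and $\underline{E}(G\ltimes Y)=\underline{E}G\times_{G^{(0)}}Y$), so your localization-algebra check is optional.
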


\begin{proof}
Because $X$ admits a coarse embedding into a Hilbert space, the coarse groupoid $G(X)$ is a-T-menable by the foundational results of Skandalis, Tu, and Yu \cite{STY2002}. Consequently, by Tu's theorem \cite{Tu1999}, the groupoid $G(X)$ satisfies the groupoid Baum--Connes conjecture with any coefficients. 

Under the canonical isomorphism $G(X) \cong G(X, \Gamma) \ltimes \beta X$ established in \Cref{lem:groupoid_isomorphism}, we can apply the Shapiro Lemma for groupoids (see, e.g., \cite[Théorème 7.1]{LeGall1999}). It implies that the Baum--Connes assembly map for $G(X, \Gamma)$ with coefficients in $A_{\beta X}$ is canonically identified with the assembly map for the transformation groupoid $G(X, \Gamma) \ltimes \beta X \cong G(X)$ with coefficients in $A$. Since the latter is an isomorphism, the Baum--Connes assembly map for $G(X, \Gamma)$ with coefficients in $A_{\beta X}$ is also an isomorphism.
\end{proof}

The anchor map $p: \beta X \to \beta(X/\Gamma)$ induces a $G(X, \Gamma)$-equivariant injective $*$-homomorphism $i: C(\beta(X/\Gamma)) \hookrightarrow C(\beta X)$. Then for any $G(X,\Gamma)$-algebra $A$, this induces a commutative diagram:
\[
\begin{tikzcd}
K_*^{top}(G(X, \Gamma), A)) \arrow[r, "\mu"] \arrow[d, "i_{*}"] & K_*(A\rtimes_r G(X, \Gamma)) \arrow[d, "i_{*}"] \\
K_*^{top}(G(X, \Gamma), A_{\beta X}) \arrow[r, "\mu_{\beta X}"] & K_*(A_{\beta X} \rtimes_r G(X, \Gamma))
\end{tikzcd}
\]
where $\mu$ is the equivariant coarse assembly map. To prove that it is injective, it suffices to prove that $i_{*}$ is an isomorphism since $\mu_{\beta X}$ is an isomorphism by \Cref{lem:descent_bc}.

For any locally compact, Hausdorff space $X$, we shall denote $M_1(X)$ the space of all regular Borel probability measures on $X$ equipped with the weak-$*$ topology. For any $x\in X$, we denote $\delta_x$ the Dirac measure on $x$.

Define $\mathcal{M}$ to be the space of regular Borel probability measures on $\beta X$ that are supported entirely on a single fiber of $p: \beta X\to\beta(X/\Ga)$. Explicitly, using the continuous push-forward map $p_*: M_1(\beta X) \to M_1(\beta(X/\Gamma))$, we define
\[\M = \{ \mu \in M_1(\beta X) \mid p_*(\mu) = \delta_\omega \text{ for some } \omega \in \beta(X/\Gamma) \}. \]
Since the set of Dirac measures $\{ \delta_\omega \mid \omega \in \beta(X/\Gamma) \}$ is closed in $M_1(\beta(X/\Gamma))$, which can be identified with $\beta (X/\Ga)$, $\M$ is actually identified with $p_*^{-1}(\beta(X/\Ga))\subseteq M_1(\beta X)$. Therefore, it is also a compact Hausdorff space. We have a canonical continuous projection $q: \mathcal{M} \to \beta(X/\Gamma)$ given by $q(\mu) = \omega$, where $p_*(\mu) = \delta_\omega$. For each $\omega \in \beta(X/\Gamma)$, the fiber $\mathcal{M}_\omega = q^{-1}(\omega)$ is exactly $M_1(p^{-1}(\omega))$, the space of probability measures on the compact fiber $p^{-1}(\omega)$. 

The anchor map for the left action of $G(X, \Gamma)$ on $\beta X$ is exactly the continuous projection $p: \beta X \to \beta(X/\Gamma)$. Therefore, any groupoid element $g \in G(X, \Gamma)$ with source $s(g) = \omega$ and range $r(g) = \eta$ restricts to a homeomorphism between the corresponding compact fibers
\[ \alpha_g: p^{-1}(\omega) \to p^{-1}(\eta), \quad x \mapsto g \cdot x. \]
This geometric homeomorphism canonically induces a left action of $G(X, \Gamma)$ on the fiberwise measure space $\mathcal{M}$ via the push-forward of measures. For a probability measure $\mu \in \mathcal{M}_\omega = M_1(p^{-1}(\omega))$, we define $g \cdot \mu := g_* \mu \in \mathcal{M}_\eta$.

To see that this induced groupoid action is continuous, recall that $\mathcal{M}$ inherits the weak-$*$ topology as a closed subspace of $M_1(\beta X)$. Since the original action $(g, x) \mapsto g \cdot x$ is continuous on the transformation groupoid $G(X, \Gamma) \ltimes \beta X$, the composed function $(g, x) \mapsto f(g \cdot x)$ is continuous on the fibered product $G(X,\Ga)\times_{\beta(X/\Ga)}\beta X=\{(g,x)\mid s(g)=p(x)\}$ for any $f \in C(\beta X)$. Consequently, the integral mapping
\[ (g, \mu) \mapsto \int_{\beta X} f(g \cdot x) \, d\mu(x) \]
is continuous on the fibered product $G(X,\Ga)\times\M$ with respect to the groupoid topology on $G(X, \Gamma)$ and the weak-$*$ topology on $\mathcal{M}$ for any $f\in C(\beta X)$. This shows that the push-forward map $(g, \mu) \mapsto g_* \mu$ is a continuous groupoid action. Thus, we have the topological transformation groupoid $G(X, \Gamma) \ltimes \mathcal{M}$.

Because $X$ admits a coarse embedding into Hilbert space, the coarse groupoid $G(X) \cong G(X, \Gamma) \ltimes \beta X$ is a-T-menable \cite{STY2002}. Let $\psi: G(X, \Gamma) \ltimes \beta X \to [0, \infty)$ be a proper, continuous, conditionally negative definite function. We define a function on $G(X, \Gamma) \ltimes \mathcal{M}$ by:
\[ \widetilde{\psi}(g, \mu) = \int_{\beta X} \psi(g, x) d\mu(x). \]
To verify that $\widetilde{\psi}$ is conditionally negative definite, we take $\omega \in \beta(X/\Gamma)$, $g_1, \dots, g_n \in G(X, \Gamma)$ share the same source $\omega$, $\mu \in \mathcal{M}_\omega$, and $c_1, \dots, c_n \in \mathbb{R}$ with $\sum_{i=1}^n c_i = 0$. Since the inverse of $(g_j, \mu)$ is $(g_j^{-1}, g_{j*} \mu)$, we compute:
\[ \sum_{i,j=1}^n c_i c_j \widetilde{\psi}(g_i g_j^{-1}, g_{j*} \mu) = \int_{p^{-1}(\omega)} \left( \sum_{i,j=1}^n c_i c_j \psi(g_i g_j^{-1}, g_j x) \right) d\mu(x). \]
Because $\psi$ is conditionally negative definite on $G(X, \Gamma) \ltimes \beta X$, the integrand is non-positive for every $x$. Thus, the integral is $\le 0$.

Furthermore, the function $\widetilde{\psi}$ is also proper. By definition, we need to show that for any $C > 0$, the sublevel set
$$ \widetilde{K}_C = \{ (g, \mu) \in G(X, \Gamma) \ltimes \mathcal{M} \mid \widetilde{\psi}(g, \mu) \le C \} $$
is compact. Since the original function $\psi$ is proper on $G(X, \Gamma) \ltimes \beta X$, its corresponding sublevel set $K_C = \{ (g, x) \mid \psi(g, x) \le C \}$ is a compact subspace. Consider the canonical projection map $\pi_1: G(X, \Gamma) \ltimes \beta X \to G(X, \Gamma)$ given by $(g, x) \mapsto g$, which is a continuous map. Thus, the set $\pi_1(K_C)$ is a compact subset of $G(X, \Gamma)$.

Now, suppose $(g, \mu) \in \widetilde{K}_C$. By definition, the integral yields $\int_{p^{-1}(s(g))} \psi(g, x) \, d\mu(x) \le C$. Recall that $\mu$ is a probability measure. Thus, there exists at least one point $x_0 \in \mathrm{supp}(\mu) \subseteq p^{-1}(s(g))$ such that $\psi(g, x_0) \le C$. This implies that the pair $(g, x_0)$ belongs to the compact set $K_C$. Therefore, if $(g, \mu) \in \widetilde{K}_C$, the groupoid element $g$ must belong to the compact set $\pi_1(K_C)$. This means the sublevel set $\widetilde{K}_C$ is contained in the restricted fibered product $\pi_1(K_C)\ltimes \mathcal{M}$. Therefore, $\wt{K_C}$ is compact. This concludes the proof that $\widetilde{\psi}$ is proper.

Thus, $G(X, \Gamma) \ltimes \mathcal{M}$ is a-T-menable. For any $G(X,\Ga)$-$C^*$-algebra $A$, we shall denote $A_{\M}=C(\M)\ox_{C(\beta(X/\Gamma))}A$ for simplicity.
By Shapiro's Lemma again, we further have the following corollary.

\begin{Cor}\label{cor:descent_bc_M}
For any $G(X,\Ga)$-$C^*$-algebra $A$, the groupoid Baum--Connes assembly map for $G(X, \Gamma)$ with coefficients in $A_{\M}$ is an isomorphism:
\[ \mu_{\M}: K_*^{top}(G(X, \Gamma), A_{\M}) \xrightarrow{\cong} K_*(A_{\M} \rtimes_r G(X, \Gamma)). \]
\end{Cor}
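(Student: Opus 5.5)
We argue exactly as in the proof of \Cref{lem:descent_bc}, replacing $\beta X$ by the compact space $\M$. The input is the discussion preceding the statement. There we showed that $G(X,\Gamma)$ acts continuously on $\M$, with anchor map $q:\M\to\beta(X/\Gamma)$, by fiberwise push-forward of measures. We also showed that $\widetilde\psi$ is a proper, continuous, conditionally negative definite function on $G(X,\Gamma)\ltimes\M$. One point was left implicit: continuity of $\widetilde\psi$. It follows from continuity of $\psi$ on the fibered product together with the weak-$*$ topology, using that $\psi(g,\cdot)$ varies continuously and is supported on the compact set $\overline{\Delta_R/\Gamma}\times_{\beta(X/\Gamma)}\beta X$ once $g$ ranges over a compact set.

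Hence $G(X,\Gamma)\ltimes\M$ is an a-T-menable, locally compact, Hausdorff groupoid with compact unit space $\M$. It is also étale, because $G(X,\Gamma)$ is étale (\Cref{thm:etale}) and the transformation groupoid of an étale groupoid acting on a space is étale. Tu's theorem \cite{Tu1999} therefore applies. It gives that $G(X,\Gamma)\ltimes\M$ satisfies the Baum--Connes conjecture with arbitrary coefficients, and in particular with coefficients in $q^*A=C(\M)\otimes_{C(\beta(X/\Gamma))}A=A_{\M}$, viewed as a $G(X,\Gamma)\ltimes\M$-algebra via the diagonal action.

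We then invoke the Shapiro-type lemma for groupoids \cite[Théorème 7.1]{LeGall1999}, as in \Cref{lem:descent_bc}. For a $G(X,\Gamma)$-space $\M$ and a $G(X,\Gamma)$-algebra $A$, it identifies three things compatibly with the assembly maps:
\begin{enumerate}
\item $K^{top}_*(G(X,\Gamma)\ltimes\M,q^*A)$ with $K^{top}_*(G(X,\Gamma),A_{\M})$;
\item $q^*A\rtimes_r(G(X,\Gamma)\ltimes\M)$ with $A_{\M}\rtimes_r G(X,\Gamma)$, which are canonically isomorphic at the level of $C_c$-kernels and reduced norms.
\end{enumerate}
Hence $\mu_{\M}$ is conjugate to the assembly map of the a-T-menable groupoid, and is therefore an isomorphism.

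The main obstacle is the hypotheses of Tu's theorem. It is usually stated for second countable (or $\sigma$-compact) groupoids, whereas $\M$ and $G(X,\Gamma)$ are built from Stone--\v{C}ech compactifications and are not second countable. The same issue already arises in \Cref{lem:descent_bc} and in \cite{STY2002} for $G(X)$. We intend to handle it as Skandalis--Tu--Yu do. Write $A$ as an inductive limit of separable $G(X,\Gamma)$-invariant subalgebras over second countable quotients of $\M$ and $\beta(X/\Gamma)$ through which $\widetilde\psi$ factors. On each such quotient groupoid Tu's theorem applies, and both sides of the assembly map commute with these inductive limits. If this reduction becomes too technical, we will instead take it as part of the standing framework, consistent with how \Cref{lem:descent_bc} was treated.
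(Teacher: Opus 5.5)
Your proposal is correct and follows the paper's route: it gets a-T-menability of $G(X,\Gamma)\ltimes\M$ from $\widetilde\psi$, applies Tu's theorem, and uses the groupoid Shapiro lemma to transfer the isomorphism to $G(X,\Gamma)$ with coefficients in $A_{\M}$. The extra points you raise are details the paper leaves implicit, not departures from its argument. These are continuity of $\widetilde\psi$ (cleanest to check on the compact open bisections $\overline{V_i}$), étaleness of the transformation groupoid, and the second-countability hypothesis in Tu's theorem.
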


Similarly, the anchor map $p_*: \M \to \beta(X/\Gamma)$ induces a $G(X, \Gamma)$-equivariant injective $*$-homomorphism $\iota: C(\beta X/\Ga) \hookrightarrow C(\M)$. Then for any $G(X,\Gamma)$-algebra $A$, this induces a commutative diagram:
\begin{equation}\label{eq:iota}
\begin{tikzcd}
K_*^{top}(G(X, \Gamma), A)) \arrow[r, "\mu"] \arrow[d, "\iota_{*}"] & K_*(A\rtimes_r G(X, \Gamma)) \arrow[d, "\iota_{*}"] \\
K_*^{top}(G(X, \Gamma), A_{\M}) \arrow[r, "\mu_{\M}"] & K_*(A_{\M} \rtimes_r G(X, \Gamma))
\end{tikzcd}
\end{equation}
To prove that $\mu$ is injective, it suffices to prove that $\iota_{*}$ is injective since $\mu_{\M}$ is an isomorphism by \Cref{cor:descent_bc_M}. 

\begin{Lem}\label{lem:iota}
With notations as above, the map $\iota_{*}$ in \eqref{eq:iota} is an isomorphism.
\end{Lem}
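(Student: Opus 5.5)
The plan is to show that $\iota_*$ on the left of \eqref{eq:iota} is an isomorphism, following Higson's measure-replacement idea \cite{Higson2000}. The key input is that $q:\M\to\beta(X/\Gamma)$ is a $G(X,\Gamma)$-equivariant bundle of \emph{convex} sets. Each fiber $\M_\omega=M_1(p^{-1}(\omega))$ is a compact convex set, and the groupoid acts by affine maps (push-forward). So $\M$ should be $G(X,\Gamma)$-equivariantly contractible over $\beta(X/\Gamma)$ once we have an equivariant continuous section. Such a section exists on the dense part: send $[x]$ to the uniform probability measure on a fixed point of the orbit... but this is not equivariant. Instead, I will argue at the level of the classifying space, where proper actions supply averaging.

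Concretely, for a proper cocompact $G(X,\Gamma)$-space $Z$ (for instance $X_d$ from \Cref{lem:Xd_metric_cocompact}), pulling back along $\rho_Z$ gives the fiber product $Z\times_{\beta(X/\Gamma)}\M$. The map $\iota_*$ on $KK^{G(X,\Gamma)}(C_0(Z),A)\to KK^{G(X,\Gamma)}(C_0(Z),A_\M)$ is induced by the equivariant inclusion $C_0(Z)\to C_0(Z\times_{\beta(X/\Gamma)}\M)$. I will construct a continuous $G(X,\Gamma)$-equivariant section $\sigma:Z\to Z\times_{\beta(X/\Gamma)}\M$ using the cut-off function $c$ from the proof of \Cref{thm:final_equivalence}. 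Set
\[\sigma(z)=\sum_{g\in r^{-1}(\rho_Z(z))}c(g^{-1}z)\,g_*\nu_{s(g)},\]
where $\nu$ is any continuous (non-equivariant) section $\beta(X/\Gamma)\to\M$. Such a $\nu$ exists by taking, on $X/\Gamma$, the Dirac mass at a chosen representative and extending via the universal property of $\beta$; the extension lands in $\M$ because $p_*$ is continuous. Equivariance follows by the same reindexing computation as for $F$ in \Cref{thm:final_equivalence}. Then the fiberwise straight-line homotopy $H_t(z,\mu)=(z,(1-t)\mu+t\,\sigma(z))$ is equivariant, since the action is affine on fibers. This homotopy shows that $\mathrm{pr}_Z$ and $\sigma$ are inverse up to equivariant homotopy on the fiber product.

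Next, I will translate this into $KK$-theory. The pair $(\mathrm{pr}_Z^*,\sigma^*)$ gives maps between $C_0(Z)$ and $C_0(Z\times_{\beta(X/\Gamma)}\M)$ that are mutually inverse in $KK^{G(X,\Gamma)}$ after tensoring with $A$ over $C(\beta(X/\Gamma))$. This yields
\[KK^{G(X,\Gamma)}(C_0(Z),A)\cong KK^{G(X,\Gamma)}(C_0(Z),A_\M),\]
naturally in $Z$, and the isomorphism is $\iota_*$. Passing to the inductive limit over $Z=X_d$ then gives the isomorphism on $K^{top}_*$.

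The main obstacle is this $KK$ identification. One must check that $KK^{\G}(C_0(Z),B_\M)$ identifies with $KK^{\G\ltimes \M}$-groups of the pullback, so that the straight-line homotopy of spaces induces an equality of classes. If this becomes delicate, my fallback is a Mayer--Vietoris argument in $KK$ over a finite cover of the cocompact $Z$ by open sets trivializing the action, reducing to the case of proper groupoids. In that case the homotopy invariance is direct, and the five lemma finishes the proof.
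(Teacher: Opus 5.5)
Your cut-off construction of the equivariant section $\sigma$ and the fiberwise straight-line homotopy on $Z\times_{\beta(X/\Gamma)}\M$ are correct. However, the step you flag as the main obstacle is a genuine gap, and the obstacle is structural, not technical. The homotopy $H_t$ lives over $Z$, the space in the \emph{first} variable. The map $\iota\colon A\to A_\M$ changes the \emph{second} variable, which is fibered over $\beta(X/\Gamma)$, not over $Z$.

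What $H_t$ gives is an equivalence $C_0(Z)\otimes_{C(\beta(X/\Gamma))}A\simeq C_0(Z)\otimes_{C(\beta(X/\Gamma))}A_\M$, even $Z$-linearly. That only controls functors of $C_0(Z)\otimes_{C(\beta(X/\Gamma))}B$, and $KK^{G(X,\Gamma)}(C_0(Z),B)$ is not of this form. Already for the trivial groupoid and compact $Z$, $KK(C(Z),B)$ is K-homology of $Z$, while Kasparov's $C(Z)$-linear group $\mathcal{R}KK(Z;C(Z),C(Z)\otimes B)=K_*(C(Z)\otimes B)$ is K-theory.

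The identification you suggest does not help either. Even granting $KK^{G(X,\Gamma)}(C_0(Z),A_\M)\cong KK^{G(X,\Gamma)\ltimes\M}(C_0(Z\times_{\beta(X/\Gamma)}\M),A_\M)$, the algebra $C_0(Z\times_{\beta(X/\Gamma)}\M)$ is a $C(\M)$-algebra through the $\M$-coordinate. Your $H_t(z,\mu)=(z,(1-t)\mu+t\sigma(z))$ moves exactly that coordinate, so it is not a $G(X,\Gamma)\ltimes\M$-equivariant homotopy and gives no equality of classes there.

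The missing idea is that the contraction must happen over pieces of the \emph{base} $\beta(X/\Gamma)$, so that the coefficient map itself becomes an equivalence. This is only available after localizing to the finite isotropy of the proper action, and it is how the paper argues.

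The slice theorem for $G(X,\Gamma)\ltimes Z$, Shapiro's lemma and Mayer--Vietoris reduce the problem to $KK^{\H_z}(C_0(U),A_W)\to KK^{\H_z}(C_0(U),(A_\M)_W)$. Here $\H_z$ is finite, $W=a_Z(U)\subseteq\beta(X/\Gamma)$, and $A_W$, $(A_\M)_W$ are the restrictions of $A$ and $A_\M$ over $W$. Averaging the Dirac-mass section (your $\nu$) over $\H_z$ gives an $\H_z$-equivariant section $W\to W\times_{\beta(X/\Gamma)}\M$. The straight-line homotopy over $W$ then makes $A_W\to(A_\M)_W$ a $KK^{\H_z}$-equivalence, which gives an isomorphism for every first variable, and the five lemma glues the pieces.

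Your averaging formula is the right tool, but it must be applied to the local finite group acting near $W$, not to $G(X,\Gamma)$ acting on $Z$. Your fallback points in this direction, but it does not say that the homotopy must live over the base. Carrying your $Z$-based homotopy into the local pieces runs into the same obstruction.
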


\begin{proof}
\emph{(Step 1. Reduction to slices)} For any cocompact proper $G(X, \Gamma)$-spaces $Z$, it suffices to show that the induced map
\[ \iota_*: KK^{G(X, \Gamma)}_*(C_0(Z), A) \to KK_*^{G(X, \Gamma)}(C_0(Z), A_{\M}) \]
is an isomorphism. Then the transformation groupoid $\mathcal{H} = G(X, \Gamma) \ltimes Z$ is a proper \'{e}tale groupoid with unit space $Z$. By the slice theorem for proper \'{e}tale groupoids (see e.g., Moerdijk and Mr\v{c}un \cite[Proposition 5.30]{MM2003}), any point $z \in Z$ admits an open neighborhood $U \subseteq Z$ such that the restricted groupoid $\mathcal{H}|_U$ is isomorphic to the action groupoid
\[ \mathcal{H}|_U \cong \H_z \ltimes U, \]
where $\mathcal{H}_z$ is the isotropy group at $z$. Because $G(X, \Gamma)$ is \'{e}tale, $\H_z$ is discrete. Because the action on $Z$ is proper, $\H_z$ is compact. Hence, $\H_z$ is a finite group, viewed as a subgroupoid of $G(X,\Ga)$.

Let $V = G(X, \Gamma) \cdot U$ be the orbit of $U$, which is an open, $G(X, \Gamma)$-invariant subspace of $Z$ equivalent to the induced space $G(X, \Gamma) \times_{\H_z} U$. By Shapiro's Lemma for groupoid actions, evaluating $G(X, \Gamma)$-equivariant $KK$-theory over $V$ reduces canonically to evaluating $\H_z$-equivariant $KK$-theory over $U$
\begin{align*}
    KK^{G(X, \Gamma)}(C_0(V), A) &\cong KK^{\H_z}(C_0(U), A_{a_Z(U)}), \\
    KK^{G(X, \Gamma)}(C_0(V), A_\M) &\cong KK^{\H_z}(C_0(U), A_{p_*^{-1}(a_Z(U))}),
\end{align*}
where $a_Z: Z\to \beta(X/\Ga)$ is the anchor map for $Z$, $p_*: \M\to \beta(X/\Ga)$ is the canonical quotient map. The algebras $A_{a_Z(U)}$ and $A_{p_*^{-1}(a_Z(U))}$ are defined as
$$A_{a_Z(U)}=C_0(a_Z(U))\otimes_{C(\beta(X/\Ga))}A\quad\text{and}\quad A_{p_*^{-1}(a_Z(U))}=C_0(p_*^{-1}(a_Z(U)))\otimes_{C(\M)}A_{\M}.$$
By a standard equivariant Mayer-Vietoris argument, we are reduced to showing that the canonical inclusion induces an isomorphism
$$\iota^U_*: KK^{\H_z}(C_0(U), A_{a_Z(U)})\to KK^{\H_z}(C_0(U), A_{p_*^{-1}(a_Z(U))})$$
is an isomorphism to show $\iota_*$ is an isomorphism.

\emph{(Step 2. A homotopy construction)}
For simplicity, we shall denote $W=a_Z(U)$. Then $p_*^{-1}(W)$ is equal to the fibered product $W\times_{\beta(X/\Ga)}\M$ which is a topological bundle over $W$ with a continuous projection $\pi: W \times_{\beta(X/\Ga)} \mathcal{M} \to W$. We choose a local section $s_0: W \to W \times_{\beta(X/\Ga)} \mathcal{M}$. (We will show the existence of this section in Step 3.)

Because $\H_z$ is a finite group acting continuously on $W \times_{\beta(X/\Ga)} \mathcal{M}$ by affine transformations on the fibers, we can average this section $s_0$ to get a $\H_z$-equivariant continuous section $s: W \to W \times_{\beta(X/\Ga)} \mathcal{M}$, defined by:
\[ s(u) = \frac{1}{|\H_z|} \sum_{g \in \H_z} g \cdot s_0(g^{-1} \cdot u). \]
By convexity, $s(u)$ is a well-defined probability measure in the fiber over $u$. 

Define the linear homotopy $H: (W \times_{\beta(X/\Ga)} \mathcal{M}) \times [0, 1] \to W \times_{\beta(X/\Ga)} \mathcal{M}$ by:
\[ H_t(u, \mu) = \big(u, \, (1-t)\mu + t s(u) \big). \]
By convexity, $H_t$ is a well-defined continuous map preserving the fibers for all $t \in [0, 1]$. Because the action of $G_U$ is affine and the section $s$ is $\H_z$-equivariant, $H_t$ strictly commutes with the $\H_z$-action. Furthermore, $H_0$ is the identity map on $W \times_{\beta(X/\Ga)} \mathcal{M}$, and $H_1(u, \mu) = s(u)$ is the projection onto the $\H_z$-invariant section $s(U) \cong U$. 

This homotopy yields a $KK^{\H_z}$-equivalence
\[ C_0(W) \sim_{KK^{\H_z}} C_0(W \times_{\beta(X/\Ga)} \mathcal{M}). \]
Thus, for any $G(X,\Ga)$-$C^*$-algebra $A$, we conclude that
\[ C_0(W) \ox_{C(\beta(X/\Ga)) }A \sim_{KK^{\H_z}} C_0(W \times_{\beta(X/\Ga)} \mathcal{M})\ox_{C(\M)}A_{\M}. \]
Therefore, $\iota_*^U$ is an isomorphism. 

\emph{(Step 3. An example of a global section)}
To construct a continuous section $s_0$, we shall show that one can actually find a global section for $p_*: \M\to\beta (X/\Ga)$.

Since $\Gamma$ acts freely on the bounded geometry (and hence discrete) space $X$, we can choose a fundamental domain $F \subseteq X$ containing exactly one representative from each right $\Gamma$-orbit. Let $\chi_F$ be the characteristic function of $F$.
This defines a map $\sigma_0: X/\Gamma \to M_1(X) \subset M_1(\beta X)$ by assigning to each discrete orbit $y \in X/\Gamma$ the probability measure
\[ \sigma_0(y) = \sum_{z \in p^{-1}(y)} \chi_F(z) \delta_z = \delta_{z_0}, \]
where $z_0$ is the unique representative of the orbit $y$ in $F$. Because the quotient space $X/\Gamma$ is discrete, the map $\sigma_0$ is trivially continuous. The space of probability measures $M_1(\beta X)$ equipped with the weak-$*$ topology is a compact Hausdorff space. Therefore, the universal property of the Stone-\v{C}ech compactification guarantees that $\sigma_0$ extends uniquely to a globally continuous map $\sigma: \beta(X/\Gamma) \to M_1(\beta X)$. On the dense discrete subset $X/\Gamma$, it is direct to see that $p_*\circ\sigma_0$ is the identity map on $X/\Ga$. After extension, the map $p_*\circ\sigma: \beta(X/\Ga)\to\beta(X/\Ga)$ must be the identity map by the universal property of the Stone-\v{C}ech compactification. This provides a canonical, globally continuous section of the measure bundle $\mathcal{M}$ over the entire unit space $\beta(X/\Gamma)$. 

We restrict this global section to the local slice $W$, and this gives us a continuous base section $s_0: W \to W \times_{\beta(X/\Ga)} \mathcal{M}$.
\end{proof}

\begin{proof}[Proof of \Cref{thm: ECNC for CE spaces}]
The theorem follows directly from the diagram \eqref{eq:iota} and \Cref{lem:iota}.
\end{proof}

\subsection*{Acknowledgements}

The author wishes to thank Jintao Deng and Jiawen Zhang for their numerous helpful suggestions during the early stages of this paper. Special thanks go to Jianchao Wu for an inspiring conversation that led the author to consider the problems addressed in this paper.

\bibliographystyle{alpha}
\bibliography{ref}
\end{document}